\newcommand{\eps}{\varepsilon}
\newcommand{\seq}{\subseteq}
\newcommand{\zz}{\mathbb{Z}}
\newtheorem{theorem}{Theorem}
\newtheorem{lemma}[theorem]{Lemma}
\newtheorem{problem}[theorem]{Problem}
\newtheorem{corollary}[theorem]{Corollary}
\newtheorem{conjecture}{Conjecture}
\theoremstyle{definition}
\title[Product representation of perfect cubes]{Product representation of perfect cubes}
\author[Fleiner]{Zsigmond Gy\"orgy Fleiner}
\email{zsgyfleiner@gmail.com}
\address{ELTE E\"otv\"os Lor\'and University Faculty of Science, 1117 Budapest, P\'azm\'any P\'eter s\'et\'any 1/A, Hungary}
\author[Juh\'asz]{M\'ark Hunor Juh\'asz}
\email{markh.shepherd@gmail.com}
\address{ELTE E\"otv\"os Lor\'and University Faculty of Science, 1117 Budapest, P\'azm\'any P\'eter s\'et\'any 1/A, Hungary}
\author[K\"ov\'er]{Blanka K\"ov\'er}
\email{koverblanka@gmail.com}
\address{ELTE E\"otv\"os Lor\'and University Faculty of Science, 1117 Budapest, P\'azm\'any P\'eter s\'et\'any 1/A, Hungary}
\author[Pach]{P\'eter P\'al Pach}
\email{pach.peter@vik.bme.hu}
\address{Department of Computer Science and Information Theory, Budapest University of Technology and Economics, M\H{u}egyetem rkp. 3., H-1111 Budapest, Hungary; \newline \hspace*{4mm} 
MTA-BME Lend\"ulet Arithmetic Combinatorics Research Group, M\H{u}egyetem rkp. 3., H-1111 Budapest, Hungary.}
\author[S\'andor]{Csaba S\'andor}
\email{sandor.csaba@ttk.bme.hu}
\address{Department of Stochastics, Institute of Mathemetics, Budapest University of Technology and Economics, M\H{u}egyetem rkp. 3., H-1111 Budapest, Hungary; \newline \hspace*{4mm} 
Department of Computer Science and Information Theory, Budapest University of Technology and Economics, M\H{u}egyetem rkp. 3., H-1111 Budapest, Hungary; \newline \hspace*{4mm} 
MTA-BME Lend\"ulet Arithmetic Combinatorics Research Group, M\H{u}egyetem rkp. 3., H-1111 Budapest, Hungary.}
\begin{document}

\maketitle

\begin{abstract}
Let  $F_{k,d}(n)$ be the maximal size of a set ${A}\subseteq [n]$ such that the equation
\[a_1a_2\dots a_k=x^d, \; a_1<a_2<\ldots<a_k\]
has no solution with $a_1,a_2,\ldots,a_k\in {A}$ and integer $x$. Erdős, Sárközy and T. Sós studied $F_{k,2}$, and gave bounds when $k=2,3,4,6$ and also in the general case. We study the problem for $d=3$, and provide bounds for $k=2,3,4,6$ and $9$, furthermore, in the general case, as well. In particular, we refute an 18 years old conjecture of Verstra\"ete.

We also introduce another function $f_{k,d}$ closely related to $F_{k,d}$: While the original problem requires $a_1, \ldots , a_k$ to all be distinct, we can relax this and only require that the multiset of the $a_i$'s cannot be partitioned into $d$-tuples where each $d$-tuple consists of $d$ copies of the same number.
\end{abstract}

\section{Introduction}
The problem of the solvability of equations of the form 
\begin{equation*}%\label{F-2-property}
a_1a_2\dots a_k=x^2, \; a_1<a_2<\ldots<a_k
\end{equation*}
in a set $A \seq [n]$ first appeared in a 1995 paper of Erdős, Sárközy and T. Sós ~\cite{ESS}. 
They investigated the maximal size of a set $A$ such that the equation cannot be solved in $A$, that is, there are no distinct $a_1, \ldots, a_k \in A$ whose product is a perfect square. This motivates the following definitions:

Let $F_{k,d}(n)$ be the maximal size of a set $A \seq [n]$ such that
\begin{equation}\label{F-k-property}
    a_1a_2\dots a_k=x^d, \; a_1<a_2<\ldots<a_k
\end{equation}
has no solution with $a_1,a_2,\ldots,a_k\in {A}$ and integer $x$. If $k\ge 2$ and $A$ is a set of positive integers such that equation (\ref{F-k-property}) cannot be solved in $A$, then $A$ is said to have property $P_{k,d}$. $\Gamma_{k,d}$ denotes the  family of sets of positive integers which have property $P_{k,d}$.  Similarly, let $f_{k,d}(n)$ be the maximal size of a set $A \seq [n]$ such that
\begin{equation}\label{f-k-property}
    a_1a_2\dots a_k=x^d
\end{equation}
has no solution with $a_1,a_2,\ldots,a_k\in {A}$ and integer $x$, except  {\it trivial} solutions that we specify below. If we allow some of the $a_i$'s in equation (\ref{f-k-property}) to coincide, some trivial solutions do arise: It is clear, for instance, that $a_1 = \ldots = a_d$ will yield a solution to the equation $a_1\ldots a_d = x^d$. Let us call a solution trivial if the multiset of the $a_i$'s can be partitioned into $d$-tuples where each $d$-tuple consists of $d$ copies of the same number: see for example $(a_1a_1a_1)(a_2a_2a_2)(a_3a_3a_3)=x^3$ for $k=9$, $d=3$. Note that trivial solutions arise only if $d\mid k$. Let $\gamma_{k,d}$ denote the family of sets $A$ of positive integers which have the property that equation (\ref{f-k-property}) cannot be solved in $A$ with the exception of trivial solutions of this kind.  Note that $f_{k,d} \leq F_{k,d} $.

With our notation, Erdős, Sárközy and T. Sós ~\cite{ESS} proved the following results: 
\begin{theorem}[Erd\H{o}s, S\'{a}rk\"{o}zy, T. S\'{o}s] For every $\ell\in \mathbb{Z}^+$, we have
\begin{enumerate}
\item $F_{2,2}(n)=\left(\frac{6}{\pi ^2}+o(1)\right)n$;
\item $\frac{n^{3/4}}{(\log n)^{3/2}}\ll F_{4,2}(n)-\pi (n)\ll \frac{n^{3/4}}{(\log n)^{3/2}}$;
\item $\frac{n^{2/3}}{(\log n)^{4/3}}\ll F_{6,2}(n)-\left(\pi (n)+\pi \left(\frac{n}{2}\right)\right)\ll n^{7/9}\log n$;
\item $\frac{n^{\frac{2\ell}{4\ell-1}}}{(\log n)^{\frac{4\ell}{4\ell-1}}}\ll F_{4\ell,2}(n)-\pi (n)\ll \frac{n^{3/4}}{(\log n)^{3/2}}$;
\item $\frac{n^{\frac{2\ell+1}{4\ell+1}}}{(\log n)^{\frac{4\ell+2}{4\ell+1}}}\ll F_{4\ell+2,2}(n)-\left(\pi (n)+\pi \left(\frac{n}{2}\right)\right)\ll n^{7/9}\log n$.
\end{enumerate}
\end{theorem}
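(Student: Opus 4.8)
\medskip
\noindent\textbf{Sketch of the proof.}\
The plan is to translate ``the product is a perfect square'' into $\mathbb{F}_2$-linear algebra. For $m\in\mathbb{Z}^{+}$ write $m=q(m)b^{2}$ with $q(m)$ squarefree and let $v(m)\in\mathbb{F}_2^{\mathcal{P}}$ record the parity of the exponent of every prime in $m$, so that $q(m)=\prod_{p\,:\,v(m)_p=1}p$. Then $a_1\cdots a_k$ is a perfect square iff $v(a_1)+\cdots+v(a_k)=0$, equivalently iff $q(a_1)\cdots q(a_k)$ is a square; hence $A$ has property $P_{k,2}$ precisely when no $k$ distinct elements of $A$ have exponent-parity vectors summing to $0$. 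Part (1) follows at once: $P_{2,2}$ says $v$ is injective on $A$, i.e.\ distinct elements have distinct squarefree parts, and since the squarefree part of $a\le n$ is a squarefree integer $\le n$ we get $F_{2,2}(n)\le Q(n)$, the number of squarefree integers up to $n$; the set of all such integers attains the bound (for squarefree $a\ne b$, $ab$ is a square iff $a=b$), and $Q(n)=\tfrac{6}{\pi^{2}}n+O(\sqrt n)$.

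For the lower bounds in (2)--(5) I would start from a \emph{skeleton}: the primes in $(\sqrt n,n]$ when $k=4\ell$, and these primes together with the numbers $2p$ ($p$ an odd prime, $2p\le n$) when $k=4\ell+2$. The skeleton has the required property --- for $k=4\ell+2$ because a vanishing vector sum over $4\ell+2$ distinct skeleton elements would force an odd coefficient on $v(2)$ --- and it already contains $\pi(n)-O(\sqrt n/\log n)$, resp.\ $\pi(n)+\pi(n/2)-O(\sqrt n/\log n)$, elements. Since every element of the $k=4\ell$ skeleton has a prime factor above $\sqrt n$ that cancels nowhere else, one may adjoin a family $C$ of squarefree products $pq\le n$ with $p,q\le\sqrt n$: a forbidden $k$-term configuration must then lie entirely in $C$, where it is a set of $k$ distinct edges of the graph $G_C$ on primes $\le\sqrt n$ whose characteristic vectors sum to $0$ --- that is, a $k$-edge subgraph all of whose degrees are even. (The $4\ell+2$ case is analogous, up to extra care for how the numbers $2p$ interact with $C$.) For $k=4$ such a subgraph can only be a $4$-cycle --- triangles are harmless, as our families contain no perfect square with which to extend them to four elements --- so one takes $C$ to be a largest $C_4$-free graph (K\H{o}v\'{a}ri--S\'{o}s--Tur\'{a}n), e.g.\ an Erd\H{o}s--R\'{e}nyi polarity graph, on $\asymp\sqrt n/\log n$ vertices, giving $|C|\asymp n^{3/4}/(\log n)^{3/2}$. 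For $k=6$ one must additionally exclude $6$-cycles, two disjoint triangles, and two triangles sharing a vertex, all of which is ensured by girth $\ge 7$; the incidence graph of a generalized quadrangle on $\asymp\sqrt n/\log n$ vertices then supplies $\asymp n^{2/3}/(\log n)^{4/3}$ edges. For general $k\in\{4\ell,4\ell+2\}$ I would instead let $C$ be a random subfamily, of density $\delta$, of all $N\asymp n/(\log n)^{2}$ admissible semiprimes, and delete one element from each of the $T\asymp(\sqrt n/\log n)^{k}$ $k$-subsets with vanishing vector sum (the $k$-cycle, on $k$ vertices, accounting for the main term of $T$); optimizing $\delta\asymp(N/T)^{1/(k-1)}$ leaves $\asymp N^{k/(k-1)}T^{-1/(k-1)}\asymp n^{k/(2(k-1))}/(\log n)^{k/(k-1)}$ elements, which is exactly the exponent claimed in (4) for $k=4\ell$ and in (5) for $k=4\ell+2$. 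As $|C|$ is in every case much larger than $\sqrt n/\log n$, the gain survives losing the primes below $\sqrt n$.

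For the upper bounds, write $|A|\le\pi(n)+|A\setminus\mathcal{P}|$ (with an additional $\pi(n/2)$, plus an accounting of the numbers $2p$, in the $4\ell+2$ case), so it is enough to bound the ``extra'' elements. Prime powers contribute $O(\sqrt n)$; an element of $A$ whose squarefree part is a product of exactly three primes would, together with those three primes, violate $P_{4,2}$ unless one of them is missing from $A$ --- an unprofitable trade --- and similar bookkeeping handles elements with larger or more numerous prime factors. The remaining extra elements are squarefree semiprimes, and here the mechanism is supersaturation: were the number of semiprimes $pq\in A$ with $p,q\le\sqrt n$ to exceed $\asymp n^{3/4}/(\log n)^{3/2}$, the corresponding graph on $\asymp\sqrt n/\log n$ vertices would have more edges than the K\H{o}v\'{a}ri--S\'{o}s--Tur\'{a}n threshold and hence $\gg\sqrt n/\log n$ vertex-disjoint $4$-cycles, and combining $\ell$ of them yields a forbidden $4\ell$-term (respectively, $(4\ell+2)$-term) configuration. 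Carrying the more delicate cases through with the same precision gives $F_{4,2}(n)-\pi(n)\ll n^{3/4}/(\log n)^{3/2}$; for $k=6$ and in the general case only a cruder treatment of those cases is available, which is what limits the stated bound to $n^{7/9}\log n$.

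The crux, I expect, is precisely that last point --- the upper bounds, and most of all the matching estimate for $F_{4,2}$. One must show the semiprime part of an extremal set really behaves like a $C_4$-free graph, deal with semiprimes having a prime factor above $\sqrt n$ and with numbers having three or more prime factors (quantifying the tension between retaining primes in $A$ and adjoining composites through them), and --- hardest of all --- control the logarithmic factors sharply enough to meet the constructions. The constructions, by contrast, are comparatively soft once the linear-algebra reformulation is set up, their only non-elementary ingredients being the extremal graphs ($C_4$-free graphs, generalized quadrangles) and a routine supersaturation/deletion argument.
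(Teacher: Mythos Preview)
This theorem is not proved in the paper --- it is quoted from \cite{ESS} as background --- so strictly speaking there is no proof here to compare against. That said, the paper's own $d=3$ results (Theorems~\ref{thm-k=6-f}, \ref{thm-k=9-f}, \ref{thm-k=3l-f}) are proved by direct analogues of the \cite{ESS} arguments, so one can compare your outline to those.

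Your part~(1) and the lower bounds in (2)--(5) are correct in outline. The $\mathbb{F}_2$-linear reformulation, the prime (resp.\ prime and $2p$) skeleton, and the adjunction of semiprimes $pq$ via an extremal graph is exactly the \cite{ESS} mechanism, mirrored here for $d=3$ in Lemmas~\ref{exgraph} and~\ref{k=6-l1}. One genuine methodological difference: for general $k$ you use random sampling and deletion, whereas \cite{ESS} and this paper take \emph{explicit} graphs of girth exceeding $k$. Both routes give the same exponents --- your count $T\ll t^k$ with $t=\pi(\sqrt n)$ is right, since an even $k$-edge subgraph has minimum degree $\ge 2$ and hence spans at most $k$ vertices, and your optimization recovers $n^{k/(2(k-1))}/(\log n)^{k/(k-1)}$ on the nose. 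The explicit route is cleaner and needs no probabilistic cleanup; yours avoids dependence on the existence of dense high-girth graphs.

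The upper-bound sketch, however, has real gaps. The ``unprofitable trade'' heuristic for elements whose squarefree part has three or more prime factors does not bound their number: many such composites can all blame the \emph{same} missing prime, so one exclusion can license arbitrarily many extras. In \cite{ESS} (and in this paper's proof of Theorem~\ref{thm-k=9-f}) this is handled not by trading but by writing every $a\in A$ as $a=uv$ with $v\le u$ and $u$ either prime or at most $n^{2/3}$, and then applying a $C_4$-free edge bound to the graph of pairs $(u,v)$; the $n^{7/9}\log n$ in (3) and (5) comes from a specific bipartite $C_6$-free estimate on that graph, not from a generic coarsening. Likewise, for (4) your step ``supersaturation gives $\gg\sqrt n/\log n$ vertex-disjoint $4$-cycles, combine $\ell$ of them'' is unjustified: supersaturation yields many copies of $C_4$, not edge-disjoint ones, and without edge-disjointness the $4\ell$ chosen elements need not be distinct. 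So while your lower-bound plan would go through essentially as written, the upper-bound plan would not, and would need to be rebuilt around the $uv$-decomposition.
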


Later Gy\H{o}ri \cite{Gyori} and the fourth named author \cite{Pach2019} improved the upper bound for $F_{6,2}(n)-\left(\pi (n)+\pi \left(\frac{n}{2}\right)\right)$. The current best upper bound is
\[
F_{6,2}(n)-\left(\pi (n)+\pi \left(\frac{n}{2}\right)\right)\ll n^{2/3}(\log n)^{2^{1/3}-1/3+o(1)}.
\]

The fourth named author \cite{Pach15} proved the lower bound $$\frac{n^{3/5}}{(\log n)^{6/5}}\ll F_{8,2}(n)-\pi (n).$$

For general cases, the current best lower bound estimates have been proved recently by the fourth named author and Vizer \cite{PachVizer}.

\begin{theorem}[Pach, Vizer] For every $\ell\in \mathbb{Z}^+$, we have
\begin{enumerate}
\item $\frac{n^{3/5}}{(\log n)^{6/5}}\ll F_{10,2}(n)-\left(\pi (n)+\pi \left(\frac{n}{2}\right)\right)$;
\item $\frac{n^{6/11}}{(\log n)^{12/11}}\ll F_{22,2}(n)-\left(\pi (n)+\pi \left(\frac{n}{2}\right)\right)$;
\item $\frac{n^{\frac{3\ell}{6\ell-2}}}{(\log n)^{\frac{3\ell}{3\ell-1}}}\ll F_{4\ell,2}(n)-\pi (n)$;
\item $\frac{n^{\frac{3\ell}{6\ell-1}}}{(\log n)^{\frac{6\ell}{6\ell-1}}}\ll F_{8\ell+2,2}(n)-\left(\pi (n)+\pi \left(\frac{n}{2}\right)\right)$;
\item $\frac{n^{\frac{6\ell-1}{12\ell-4}}}{(\log n)^{\frac{6\ell-1}{6\ell-2}}}\ll F_{8\ell+6,2}(n)-\left(\pi (n)+\pi \left(\frac{n}{2}\right)\right)$.
\end{enumerate}
\end{theorem}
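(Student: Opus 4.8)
\medskip

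\noindent\emph{Sketch of the strategy.}
The plan is to produce, for each of the relevant residue classes of $k$, a set $A\subseteq[n]$ of the stated size having property $P_{k,2}$, written as a disjoint union $A=A_{\mathrm{main}}\sqcup B$ where $A_{\mathrm{main}}$ carries the leading term and $B$ is a sparse ``gadget'' carrying the power of $n$. For the leading term I would use only \emph{inert} building blocks: take $A_{\mathrm{main}}=\{p\le n:\ p\ \text{prime},\ p>\sqrt n\}$ when $k\equiv 0\pmod 4$, and additionally adjoin $\{2p\le n:\ p\ \text{prime}\}$ when $k\equiv 2\pmod 4$. The reason for restricting to primes above $\sqrt n$ (and to the numbers $2p$) is that such a prime $p$ divides at most one element of any set of distinct integers in $[n]$ that is available to $A$, so $p^2$ cannot divide a product of distinct elements of $A$; hence no element of $A_{\mathrm{main}}$ can occur in a solution of $a_1\cdots a_k=x^2$, with the single caveat, in the case $k\equiv 2\pmod 4$, of the prime $2$. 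That caveat is settled by a short $2$-adic parity computation on the exponent of $2$ versus the odd part of a putative solution; it is exactly this computation that forces $k\equiv 2\pmod4$ (rather than $0$) for the $2p$'s to be safe, and the finer behaviour mod $8$ is what later distinguishes the two regimes in parts (4) and (5). Since we have thrown away only $\pi(\sqrt n)=O(\sqrt n/\log n)$ primes, $|A_{\mathrm{main}}|$ is $\pi(n)$ (resp.\ $\pi(n)+\pi(n/2)$) up to an error that will be absorbed by $|B|$.

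For the gadget, encode ``being a square'' multiplicatively: to an integer $m$ supported on the primes $\le\sqrt n$ attach $v(m)\in\mathbb F_2^{\pi(\sqrt n)}$, the vector of exponents mod $2$, so that a product of such integers is a square iff the vectors sum to $0$. Let $r:=\pi(\sqrt n)$ and take $B=\{pq:\ \{p,q\}\in E(G)\}$ for a graph $G$ on the $r$ small primes; then each $v(pq)$ has weight $2$, while the vectors of the elements of $A_{\mathrm{main}}$ are supported on the primes $>\sqrt n$, disjoint from the coordinates of the $v(pq)$. Consequently any square product formed from $A_{\mathrm{main}}\cup B$ must in fact use only elements of $B$ and corresponds to an even subgraph of $G$; thus $A_{\mathrm{main}}\cup B$ has property $P_{k,2}$ as soon as $G$ has \emph{no even subgraph with exactly $k$ edges} (for instance whenever the girth of $G$ exceeds $k$). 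Now invoke the extremal theory of graphs without short even subgraphs — incidence graphs of generalized polygons and polarity graphs where they exist, and the known dense large-girth constructions in general: for each $\ell$ one gets a graph on $r$ vertices, free of even subgraphs of size $4\ell$, with $\Omega\!\bigl(r^{1+1/(3\ell-1)}\bigr)$ edges. Each edge yields a semiprime $\le n$, so $|B|\gg r^{1+1/(3\ell-1)}=\pi(\sqrt n)^{\frac{3\ell}{3\ell-1}}\asymp n^{\frac{3\ell}{6\ell-2}}(\log n)^{-\frac{3\ell}{3\ell-1}}$, which dominates $\pi(\sqrt n)$, giving part (3); the case $\ell=1$ reproduces the Erd\H os--S\'ark\"ozy--S\'os bound. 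Parts (4) and (5) run the same machine with $4\ell$ replaced by $8\ell+2$, resp.\ $8\ell+6$, the graph now also required to be compatible with the $2p$'s, and with the parameters re-optimized, which is what yields the exponents $\tfrac{3\ell}{6\ell-1}$ and $\tfrac{6\ell-1}{12\ell-4}$; parts (1) and (2) are the small exceptional values $k=10,22$, where a concrete generalized polygon of the right girth beats the generic formula.

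The routine ingredients are the $2$-adic lemma for $A_{\mathrm{main}}$, the ``support decoupling'' observation, and converting an edge count into a lower bound on $|B|$. The step I expect to be the real obstacle is verifying property $P_{k,2}$ once the $2p$'s are present, i.e.\ in parts (4) and (5): there the clean decoupling fails, because an element $2p$ with $p\le\sqrt n$ shares its odd prime with the vertex set of $G$, so a single such element behaves like the light vector $v(2)+e_p$ and can cancel coordinates and close up a relation forbidden inside $B$ alone (already $2p\cdot 2q\cdot pq$ is a square on three elements). Overcoming this requires either keeping the $2p$'s off the primes used by $G$ — which shrinks the usable window and degrades the exponent — or strengthening the graph condition to ``no even subgraph of the critical size survives after adding any bounded number of weight-$\le1$ vectors'', whose extremal density, combined with the constraint $B\subseteq[n]$, is precisely what produces the exponents in (4) and (5). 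Checking that the finitely many mixed patterns (so many semiprimes, so many bare primes, so many $2p$'s) all collapse to this girth/robustness condition is where essentially all of the case analysis lives; everything else is optimization of the single parameter controlling the size of the small primes.
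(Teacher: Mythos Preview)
This theorem is not proved in the paper; it is quoted from \cite{PachVizer} as background. So there is no ``paper's own proof'' to compare against directly, only the $d=3$ analogues proved later (Theorems~\ref{thm-k=1233-F} and~\ref{thm-k=36-F}), whose method is the natural template.

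Your sketch of part~(3) is the standard argument and is correct in outline: restrict $A_{\mathrm{main}}$ to primes above $\sqrt n$, take $B=\{pq:\{p,q\}\in E(G)\}$ for a $\{C_3,\ldots,C_{4\ell}\}$-free graph $G$ on the small primes (with $\gg r^{3\ell/(3\ell-1)}$ edges by the Lazebnik--Ustimenko--Woldar construction), and observe that a square product of $4\ell$ distinct elements cannot touch $A_{\mathrm{main}}$ and hence forces an even subgraph of $G$ on $4\ell$ edges, which the girth forbids.

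For parts~(4) and~(5), however, the obstacle you single out is a red herring. You take $A_{\mathrm{main}}$ to contain $2p$ for \emph{all} primes $p$, and then worry that $2p$ with $p\le\sqrt n$ interacts with $G$; you then say this ``shrinks the usable window and degrades the exponent''. It does not. The cure is simply to put only $\{2p:\sqrt n<p\le n/2\}$ into $A_{\mathrm{main}}$ and to build $G$ on the primes in $(2,\sqrt n]$; this discards $O(\pi(\sqrt n))$ elements, far below the error term, and makes the odd-prime supports of $A_{\mathrm{main}}$ and $B$ disjoint. This is exactly what the present paper does in the $d=3$ proofs (see the definition of $B$ and the restriction $2<p\le\sqrt n$ or $3<p\le\sqrt n$ in the proofs of Theorems~\ref{thm-k=1233-F} and~\ref{thm-k=36-F}). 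With that fix, a putative square product factors cleanly: the $A_{\mathrm{main}}$-part must consist of $m$ pairs $\{p,2p\}$ with $m$ even (the $2$-adic parity), and the $B$-part must then be an even subgraph of $G$ on $s=k-2m$ edges, with $s\equiv 2\pmod 4$. The genuine work is the case analysis over the admissible values of $s$ and the choice of a graph $G$ that excludes even subgraphs of each such size while retaining enough edges to hit the exponents $3\ell/(6\ell-1)$ and $(6\ell-1)/(12\ell-4)$ (and the special values in (1),~(2)). Your sketch never reaches this point; as written it neither identifies the correct graph condition nor derives any of the stated exponents for (1), (2), (4), (5).
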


Note that the case $2\mid k$ is closely related to (generalized) multiplicative Sidon sets. However, the case $2\nmid k$ seems to be much more difficult. The following is known:

\begin{theorem}[Erd\H{o}s, S\'{a}rk\"{o}zy, T. S\'{o}s]\label{thm-ess-odd}
For every $\ell\in \mathbb{Z}^+$ and $\varepsilon >0$, we have
\begin{enumerate}
\item $\frac{n}{(\log n)^{1+\varepsilon}}\ll n-F_{3,2}(n)\le n-f_{3,2}(n)\ll n(\log n)^{\frac{e\log 2}{2}-1+\varepsilon}$;
\item $\displaystyle \liminf_{n\to \infty }\frac{F_{2\ell+1,2}(n)}{n}\ge \log 2 =0.69\dots $;
\item $\frac{n}{(\log n)^2}\ll n-F_{2\ell+1,2}(n)$.
\end{enumerate}
\end{theorem}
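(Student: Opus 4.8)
\medskip

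I address the three parts in turn; the covering estimates in (1) are where the real work lies.

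\textbf{Part (2).} The plan is to produce one explicit set $A\seq[n]$ of density $\log 2$ that works simultaneously for every odd $k$. Take $A=\{m\in[n]:P(m)>\sqrt n\}$, where $P(m)$ is the largest prime factor of $m$. The mechanism: if $q>\sqrt n$ is prime and $q\mid m\le n$, then $q^2>n\ge m$, so $v_q(m)=1$, and in fact $q=P(m)$, since a second prime $>\sqrt n$ dividing $m$ would force $m>n$. Now if $a_1,\dots,a_k\in A$ satisfy $a_1\cdots a_k=x^2$, then for every prime $q>\sqrt n$ the exponent $v_q(x^2)=\sum_i v_q(a_i)=\#\{i:P(a_i)=q\}$ is even; summing over all primes $q>\sqrt n$ and using $P(a_i)>\sqrt n$ for every $i$ gives $k=\sum_{q>\sqrt n}\#\{i:P(a_i)=q\}\equiv 0\pmod 2$, contradicting $2\nmid k$. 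Hence $A$ has property $P_{2\ell+1,2}$ (indeed it admits no solution at all, whether or not the $a_i$ are distinct). Finally $m\mapsto(P(m),m/P(m))$ is a bijection from $A$ onto $\{(q,l):q>\sqrt n\text{ prime},\ ql\le n\}$, so $|A|=\sum_{\sqrt n<q\le n}\lfloor n/q\rfloor=n\sum_{\sqrt n<q\le n}\tfrac1q+O(\pi(n))=(\log 2+o(1))n$ by Mertens' theorem, yielding $\liminf_n F_{2\ell+1,2}(n)/n\ge\log 2$.

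\textbf{Part (3).} Here I would use a Mantel‑type (Turán) argument. Put $s=\pi(\sqrt n)$. For distinct primes $p_1,p_2,p_3\le\sqrt n$ the set $\{p_1p_2,p_2p_3,p_1p_3\}$ consists of three distinct integers in $[n]$ with product $(p_1p_2p_3)^2$; for $k=2\ell+1>3$ enlarge it to a bad $k$‑set by adjoining $\ell-1$ disjoint pairs $\{c,4c\}$ (product $(2c)^2$), which $A$ contains unless it already omits $\gg n/(\log n)^2$ elements. Identifying the semiprimes $\{p_ip_j:p_i<p_j\le\sqrt n\}$ with the edges of $K_s$, an $A\in\Gamma_{2\ell+1,2}$ (after deleting the fixed pairs) cannot contain a triangle of this graph, so by Mantel's theorem it contains at most $\lfloor s^2/4\rfloor$ of these $\binom s2$ semiprimes, hence omits at least $\binom s2-\lfloor s^2/4\rfloor=(1+o(1))\tfrac{s^2}{4}=(1+o(1))\tfrac{n}{(\log n)^2}$ integers, giving $n-F_{2\ell+1,2}(n)\gg n/(\log n)^2$.

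\textbf{Part (1).} The middle inequality $n-F_{3,2}(n)\le n-f_{3,2}(n)$ is immediate from $f_{3,2}\le F_{3,2}$. For both bounds it is convenient to pass to squarefree kernels: writing $S_d=\{dj^2\le n\}$ for squarefree $d$, a set containing no perfect square has property $P_{3,2}$ exactly when $D=\{d:A\cap S_d\ne\emptyset\}$ contains no three distinct $d_1,d_2,d_3$ with $d_1d_2d_3$ a square, i.e.\ with $d_1\oplus d_2\oplus d_3=0$ in $\mathbb F_2$ (coordinates indexed by primes); thus $n-f_{3,2}(n)$ equals, up to $O(\sqrt n)$, the minimum weight $\sum_{d\notin D}\lfloor\sqrt{n/d}\rfloor$ of a transversal of all such triples. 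For the lower bound one uses the triples $\{p,b,pb\}$ (more generally $\{au^2,bv^2,abw^2\}$) and argues that this minimum weight is large: either $D$ omits enough squarefree numbers in a critical range (so the weight is already $\gg n/(\log n)^{1+\eps}$), or $3$‑term‑freeness forces $D$ to be small in an adjacent range, and a cascade of such dichotomies across scales produces the exponent $1+\eps$. For the upper bound one constructs a $3$‑term‑free $D$ by deleting those squarefree numbers whose number of prime factors lies in a carefully chosen set; bounding the total deleted weight $\sum_{d\notin D}\sqrt{n/d}$ is a Rankin‑type estimate for the corresponding restricted‑$\Omega$ counting function, and optimizing the threshold yields the exponent $\tfrac{e\log 2}{2}-1$.

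\textbf{Main obstacle.} In (1) and (3) the difficulty is that $A$ can try to evade any single family of bad triples by discarding a whole sparse set at low cost (all primes in a dyadic range, all integers in a short interval, a whole square class, etc.); the Mantel argument defeats this for the $(\log n)^{-2}$ bound, but reaching $(\log n)^{-1-\eps}$ from below in (1) — and matching it from above with the $\tfrac{e\log 2}{2}-1$ exponent — requires simultaneously controlling many scales together with the delicate analytic (smooth‑number / Rankin) count, and this is the crux.
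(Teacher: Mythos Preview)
First, note that the paper does not itself prove this theorem: it is quoted from \cite{ESS}, and the paper instead proves the generalisations Theorem~\ref{lowerbound} and Theorem~\ref{upperbound} (for parts (2) and (3)) and the $d=3$ analogue Theorem~\ref{thm-k=4} (for part (1)). So I compare your approach to those.

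\textbf{Parts (2) and (3).} Your arguments here are correct and are essentially the $d=2$ specialisations of what the paper does. For (2), your set $\{m:P(m)>\sqrt n\}$ is exactly the case $\alpha=\tfrac12$ of the paper's construction $A_{\alpha,n}$ in the proof of Theorem~\ref{lowerbound}; the paper optimises over $\alpha\in[\tfrac13,\tfrac12]$ to get the stronger constant $c_0=0.828\dots$, but your version already yields $\log 2$ as claimed. For (3), your Mantel argument on the graph of semiprimes is precisely the $d=2$ case of the paper's $K_k^d$-free hypergraph argument via Lemma~\ref{Caen}. Your extension to $k=2\ell+1>3$ by adjoining $\ell-1$ pairs $\{c,4c\}$ is workable, though written loosely: you should make explicit that you first assume $n-|A|<\eps n/(\log n)^2$, then fix $\ell-1$ disjoint such pairs inside $A$ (which exist since $A$ is dense), chosen disjoint from the semiprimes under consideration, and only then run Mantel to derive a contradiction.

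\textbf{Part (1).} This is where you have a genuine gap. What you have written is not a proof but a narrative of intentions: a ``cascade of such dichotomies across scales'' for the lower bound and ``a Rankin-type estimate \dots\ optimizing the threshold'' for the upper bound, with no actual arguments carried out. The original proof in \cite{ESS} (mirrored in this paper's proof of Theorem~\ref{thm-k=4}) is quite different from your sketch. For the lower bound one works not with arbitrary triples $\{p,b,pb\}$ but with integers $d\le\sqrt n$ having $\Omega(d)\le\tfrac12\log\log n$, forms a graph/hypergraph whose edges correspond to products in $A$, and uses Tur\'an-type density to force many missing products; the exponent comes from the combination of Lemma~\ref{ess-l11} and the $\tau$-bound $\tau(m)\le 2^{\Omega(m)}$. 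For the upper bound one does not simply delete squarefree numbers according to the value of $\Omega$; one takes integers $a\in[n/\log n,n]$ with no large square divisor that are \emph{not} representable as $a=uv$ with $u,v$ both near $\sqrt n$, proves via a factorisation lemma (the analogue of Lemma~\ref{k=4-l1}) that any solution $a_1a_2a_3=x^2$ would force such a representation, and then bounds the set of excluded integers by the analogue of Lemma~\ref{k=4-l2}. Your outline misses both the structural factorisation step and the specific counting mechanism that produces the exponent $\tfrac{e\log 2}{2}-1$.
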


So already for $F_{5,2}$ the right shape of the function is not yet determined and remains an interesting open problem to find it.

The following bounds can be proved for the functions $f_{k,2}(n)$ similarly to the proofs of the above estimates, we omit the details.

\begin{theorem} For every $\ell\in \mathbb{Z}^+$ and $\varepsilon >0$, we have
\begin{enumerate}
\item $f_{2,2}(n)=\left(\frac{6}{\pi ^2}+o(1)\right)n$;
\item $\frac{n^{3/4}}{(\log n)^{3/2}}\ll f_{4,2}(n)-\pi (n)\ll \frac{n^{3/4}}{(\log n)^{3/2}}$;
\item $\frac{n^{2/3}}{(\log n)^{4/3}}\ll f_{6,2}(n)-\pi (n)\ll n^{2/3}(\log n)^{2^{1/3}-1/3+\varepsilon }$;
\item $\frac{n^{\frac{2\ell}{4\ell-1}}}{(\log n)^{\frac{4\ell}{4\ell-1}}}\ll f_{4\ell,2}(n)-\pi (n)\ll \frac{n^{3/4}}{(\log n)^{3/2}}$;
\item $\frac{n^{\frac{2\ell+1}{4\ell+1}}}{(\log n)^{\frac{4\ell+2}{4\ell+1}}}\ll f_{4\ell+2,2}(n)-\pi (n)\ll n^{2/3}(\log n)^{2^{1/3}-1/3+\varepsilon }$.
\end{enumerate}
\end{theorem}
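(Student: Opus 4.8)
The plan is to follow the Erd\H{o}s--S\'ark\"ozy--T.\ S\'os arguments for $F_{k,2}$ and track the (small) effect of relaxing $\Gamma_{k,2}$ to $\gamma_{k,2}$. Throughout I would write each $a\in A$ as $a=s_at_a^2$ with $s_a$ squarefree and identify $a$ with the vector $v(a)$ over $\mathbb{F}_2$ indexed by the primes recording the exponent parities, so that $a_1\cdots a_k=x^2$ means $v(a_1)+\dots+v(a_k)=0$; in this language a solution is \emph{trivial} exactly when the multiset $\{a_1,\dots,a_k\}$ breaks into $k/2$ pairs of equal integers.

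\textbf{Upper bounds.} For $k=2$ there is nothing to relax ($a_1a_2=x^2$ with $a_1\ne a_2$ is the same as with $a_1<a_2$), so $f_{2,2}=F_{2,2}$ and (1) is immediate. Since $f_{k,2}\le F_{k,2}$, parts (2) and (4) follow at once from the already recorded bounds $F_{4,2}(n)-\pi(n)\ll n^{3/4}/(\log n)^{3/2}$ and $F_{4\ell,2}(n)-\pi(n)\ll n^{3/4}/(\log n)^{3/2}$. For $k=6$ and $k=4\ell+2$ the known upper bounds only control $F_{k,2}(n)-(\pi(n)+\pi(n/2))$, so I would rerun the proof. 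The single new ingredient is that a set in $\gamma_{4\ell+2,2}$ can contain essentially none of the numbers $2p$: if $2\in A$ and $p,2p,q,2q\in A$ with $p\ne q$ odd primes, then $2\cdot 2\cdot(2p)\cdot(2q)\cdot p\cdot q=(4pq)^2$, padded with $2\ell-2$ dummy pairs $r\cdot r=r^2$, is a \emph{non-trivial} solution of length $4\ell+2$; the variant $(2p)(2p)(2q)(2r)\cdot q\cdot r=(4pqr)^2$ handles the case $2\notin A$. Hence at most $O(1)$ of the ``twice a prime'' elements can survive (any more, together with two of the corresponding primes, create one of these configurations, and replacing a $2p$ by $p$ never loses size). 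Feeding this into the Erd\H{o}s--S\'ark\"ozy--T.\ S\'os decomposition of $A$ — primes, ``twice a prime'' block, and a remainder consisting of elements whose squarefree kernel has $\ge 3$ prime factors or whose square part is large, counted as in \cite{ESS} and refined as in \cite{Gyori,Pach2019} — collapses the first two blocks to just the primes and yields $|A|\le\pi(n)+O\!\left(n^{2/3}(\log n)^{2^{1/3}-1/3+\varepsilon}\right)$, giving (3) and (5).

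\textbf{Lower bounds.} For each $k$ I would take the Erd\H{o}s--S\'ark\"ozy--T.\ S\'os extremal construction for $F_{k,2}$ — the primes in $[n]$ together with a sparse auxiliary family $B$ of composites in a controlled multiplicative range (and, for $k\equiv 2\pmod 4$, a block of numbers $2p$, which I now discard) — and keep exactly the primes together with $B$. The size of $B$ is counted verbatim as in \cite{ESS} and already supplies the lower-bound expressions in (2)--(5), so $f_{k,2}(n)\ge\pi(n)+|B|$ finishes. What must be re-verified is that $\{\text{primes}\}\cup B$ lies in $\gamma_{k,2}$, i.e.\ admits no non-trivial zero-sum $v(a_1)+\dots+v(a_k)=0$ even with repetitions allowed. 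Here one uses that the $v(p)=e_p$ are linearly independent and that $B$ is chosen so that only a bounded number of its elements can appear in any single relation (this is precisely the property making the $F_{k,2}$ construction work): after cancelling matched equal integers a surviving relation would become a forbidden zero-sum of \emph{distinct} elements of $\{\text{primes}\}\cup B$, contradicting its $F_{k,2}$-property. One then checks case by case how many repeated copies of a prime or of a $B$-element a relation can tolerate and confirms that every remaining relation is trivial.

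\textbf{Main obstacle.} The genuinely fiddly part is this last verification — ruling out the new ``mixed'' relations in which a repeated prime feeds into the composite block $B$, e.g.\ $p\cdot p\cdot b_1\cdots b_{k-2}=x^2$. One also has to make sure the elimination of the $2p$-elements in the upper bound coexists with the $2\ell-2$ dummy trivial pairs (automatic since $4\ell+2\ge 6$) and that the Gy\H{o}ri--Pach refinement of the remainder count transfers to the $f$-setting unchanged. I expect none of this to require an idea beyond \cite{ESS,Gyori,Pach2019}, which is why only a sketch is warranted.
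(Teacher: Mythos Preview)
The paper itself omits the proof, saying only that it goes ``similarly to the proofs of the above estimates''; judging by the analogous argument the paper \emph{does} write out for $f_{9,3}$ (Theorem~\ref{thm-k=9-f}), the intended route for the upper bounds in (3) and (5) is: first observe that $A\in\gamma_{4\ell+2,2}$ forbids every nontrivial equality $a_1a_2a_3=b_1b_2b_3$ of multisets, then rerun the Gy\H{o}ri--Pach graph count with this stronger hypothesis, where the extra forbidden configuration $p\cdot(2q)\cdot r=(2p)\cdot q\cdot r$ makes the ``large prime'' bipartite graph $C_4$-free already on the prime side and collapses the main term from $\pi(n)+\pi(n/2)$ to $\pi(n)$.

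Your route for (3) and (5) is different, and it has a genuine gap. The assertion that ``at most $O(1)$ of the `twice a prime' elements can survive'' is false as stated: the set $\{2p:\ p\text{ odd prime},\ 2p\le n\}$ lies in $\gamma_{4\ell+2,2}$ for every $\ell$ (any product $(2p_{i_1})\cdots(2p_{i_{4\ell+2}})$ is a square only when the $p_{i_j}$ pair up, which is a trivial solution), yet it contains $\sim\pi(n/2)$ such elements. Your attempted repair, ``replacing a $2p$ by $p$ never loses size'', is also not valid: this substitution need not preserve membership in $\gamma_{4\ell+2,2}$. For a concrete counterexample take distinct odd primes $p,q,r,s$ and
\[
A=\{2p,\ pq^2,\ r,\ s\}.
\]
Here $(2p)^a(pq^2)^br^cs^d$ is a square iff $a,b,c,d$ are all even, so every length-$6$ square relation in $A$ is trivial and $A\in\gamma_{6,2}$. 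But after replacing $2p$ by $p$ the set $A'=\{p,pq^2,r,s\}$ admits the nontrivial relation
\[
p\cdot(pq^2)\cdot r\cdot r\cdot s\cdot s=(pqrs)^2,
\]
so $A'\notin\gamma_{6,2}$. Thus the swap $2p\mapsto p$ cannot be used to reduce to the case of few $2p$'s, and your upper-bound argument for (3) and (5) does not go through. (For (1), (2), (4) and for the lower bounds your sketch is fine and matches what the paper intends.)
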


Based on the work of Erd\H{o}s, S\'{a}rk\"{o}zy, and T. S\'{o}s, Verstra\"ete ~\cite{Ver}
studied a similar problem: He aimed to find the maximal size of a set ${A} \seq [n]$ such that no product of $k$ distinct elements of ${A}$ is in the value set of a given polynomial $f\in \mathbb{Z}[x]$. He showed that for a certain class of polynomials the answer is $\Theta(n)$, for another class it is $\Theta(\pi(n))$, and conjectured that these are the only two possibilities: 
\begin{conjecture}\label{conj-ver}
    Let $f\in\mathbb{Z}[x]$ and let $k$ 
 be a positive integer. Then, for some constant $\rho=\rho(k,f)$ 
 depending only on $k$ and $f$, the maximal size of  
 a set ${A} \seq [n]$ such that no product of $k$ distinct elements of ${A}$ is in the value set of  $f$ is either $(\rho+o(1))n$ or $(\rho+o(1))\pi(n)$ as $n\to\infty$.
 
\end{conjecture}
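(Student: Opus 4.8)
The final statement is Conjecture~\ref{conj-ver}, and the abstract announces that the paper \emph{refutes} it; so the ``proof proposal'' below is a plan for disproving it.

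\medskip

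The plan is to refute Conjecture~\ref{conj-ver} by showing that already for $f(x)=x^3$ and $k=6$ the extremal quantity in the conjecture --- which is precisely $F_{6,3}(n)$ --- has an order of magnitude strictly between $\pi(n)$ and $n$. Concretely, I would prove
\[
\pi(n)=o\bigl(F_{6,3}(n)\bigr)\qquad\text{and}\qquad F_{6,3}(n)=o(n),
\]
so that $F_{6,3}(n)$ is neither $(\rho+o(1))n$ nor $(\rho+o(1))\pi(n)$ for any constant $\rho$. The same scheme should also treat $k=9$ and, more generally, every $k$ with $3\mid k$ and $k\ge 6$; note that $k=3$ is genuinely different, since there the set of all squarefree integers in $[n]$ has property $P_{3,3}$, so $F_{3,3}(n)=\Theta(n)$.

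For the lower bound I would use squarefree semiprimes. A short count of prime exponents modulo $3$ shows that if six \emph{distinct} squarefree semiprimes have product a perfect cube, then there are primes $p,q,r,s$ for which the six semiprimes are exactly the $\binom{4}{2}$ products $pq,pr,ps,qr,qs,rs$ --- the only way to split the $12$ prime slots into multiples of $3$ without forcing a repeated semiprime is $3+3+3+3$. Now let
\[
A=\{\,pq\le n:\ p,q\text{ primes},\ p\le\sqrt n<q\,\}.
\]
Among any four primes at least two lie on the same side of $\sqrt n$, so $A$ contains no such quadruple of pairwise products; hence $A$ has property $P_{6,3}$. A Mertens-type estimate then gives
\[
|A|=\sum_{p\le\sqrt n}\bigl(\pi(n/p)-\pi(\sqrt n)\bigr)\gg\frac{n\log\log n}{\log n},
\]
which exceeds $\pi(n)$ by a factor tending to infinity. (With more work one can replace semiprimes by squarefree integers $m$ with $\omega(m)$ close to $\log\log n$, arranged to avoid the analogous ``design'' configurations, and push the lower bound up to $n/(\log\log n)^{O(1)}$; but the displayed bound already suffices.)

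For the upper bound I would encode $m\le n$ by the vector $v(m)$ in $\bigoplus_{p}\zz/3\zz$ recording, for each prime $p$, the exponent of $p$ in $m$ reduced mod $3$; then $k$ distinct integers have product a perfect cube exactly when the corresponding $k$ vectors sum to $0$. Since $3v=0$, any three elements of $A$ sharing a value of $v$ already sum to $0$, so if $A$ has property $P_{6,3}$ then every fibre $\{m\in A:\ v(m)=c\}$ has size at most $2$, with at most one exception (of size at most $n^{1/3}$). Discarding that exceptional fibre and passing to the set $B$ of cube-free parts, we get $|B|\ge\tfrac12|A|-n^{1/3}$ cube-free integers in $[n]$, and it is enough to show: \emph{any set of at least $\delta n$ cube-free integers in $[n]$ contains six distinct members with product a perfect cube} (pulling such a sextuple back along $v$ gives six distinct elements of $A$ with cube product). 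I would obtain this by a supersaturation/removal-lemma argument for the configurations $\{gpq,gpr,gps,gqr,gqs,grs\}$ (with $g$ squarefree and coprime to $pqrs$) together with their degenerate $\mathbb F_3$-linear analogues: because $g$ and the four primes vary essentially freely, the number of such configurations inside $[n]$ is large and sufficiently uniformly distributed for a second-moment argument to locate one entirely inside a positive-density set.

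\textbf{The main obstacle is this last step}, the passage from positive density to an actual cube-product sextuple. One must control the contribution of the small fibres in the reduction, so that the loss of a factor $2$ does not push $|B|$ below the supersaturation threshold, and one must handle cube-free parts that are not squarefree, where the configurations take a slightly more intricate form; doing the counting carefully --- rather than the crude first-moment bound, which only yields density bounded away from $1$ and not $o(n)$ --- is where the real work lies.
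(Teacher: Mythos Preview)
Your lower bound is correct and matches the paper's approach: both exhibit a set of squarefree semiprimes of size $\gg \frac{n\log\log n}{\log n}$ in which the $K_4$-configuration $\{p_ip_j:1\le i<j\le 4\}$ cannot occur. The paper takes $\{pq:\ n/\log n<pq\le n,\ p<q/\log n\}$ and notes that $p_3p_4\in A$ forces $p_3\le\sqrt{n/\log n}$, whence $p_1p_2<n/\log n$; your bipartition of the primes at $\sqrt n$ works equally well, and the size estimate via Mertens is the same.

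The upper bound, however, has a genuine gap. The supersaturation statement you isolate --- that every set of $\delta n$ cube-free integers in $[n]$ contains six distinct members whose product is a cube --- is precisely the hard part, and your second-moment sketch for the configurations $\{gpq,gpr,gps,gqr,gqs,grs\}$ is not a proof; there is no off-the-shelf removal lemma for this six-term multiplicative system, and ``doing the counting carefully'' hides the entire difficulty. (A minor slip in the reduction itself: the one exceptional fibre must have size at most $5$, not $n^{1/3}$, since any six elements sharing a $v$-value already multiply to a cube; this does not affect the structure of your argument.)

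The paper bypasses the whole issue with a one-line appeal to a 1964 theorem of Erd\H{o}s on multiplicative representations: any $A\subseteq[n]$ with $|A|\ge(1+\delta)\frac{n\log\log n}{\log n}$ contains six \emph{distinct} elements satisfying $a_1a_2=a_3a_4=a_5a_6$, and then $a_1a_2a_3a_4a_5a_6=(a_1a_2)^3$ is a cube. This gives not merely $F_{6,3}(n)=o(n)$ but the sharp upper bound matching the construction, so that $F_{6,3}(n)=(1+o(1))\frac{n\log\log n}{\log n}$, which refutes Conjecture~\ref{conj-ver}. You should replace your density/removal plan with this reduction.
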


For further related results, see  \cite{Pach15, Sárközy}.

We investigated the original problem in the case $d=3$, and provided bounds for both $F_{k,3}$ and $f_{k,3}$. As expected, several additional difficulties arise compared to the case $d=2$ which is also not fully resolved. To overcome these, various new ideas are needed of combinatorial and number theoretic nature.  We summarize our results below.

%%%% k=2
For $k=2$, the following bounds hold:
\begin{theorem}\label{thm-k=2}
    There exist positive constants $c_1$ and $c_2$ such that $$c_1n^{2/3}<n-F_{2,3}(n) \leq n-f_{2,3}(n)<c_2n^{{2}/{3}}.$$
\end{theorem}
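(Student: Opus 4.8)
The plan is to organize $[n]$ according to the \emph{cubefree part} of its elements. For $m\in[n]$ let $c(m)=\prod_p p^{\,v_p(m)\bmod 3}$, and for a cubefree integer $s$ let $\bar s=\prod_{p\mid s}p^{\,(-v_p(s))\bmod 3}$; then $s\mapsto\bar s$ is an involution on the cubefree integers with unique fixed point $1$, and $s\bar s=\operatorname{rad}(s)^3$. Moreover every cubefree $s$ has a unique representation $s=uv^2$ with $u,v$ squarefree and $\gcd(u,v)=1$, namely $u=\prod_{p\,\|\,s}p$ and $v=\prod_{p^2\|\,s}p$; for such $s$ one has $\bar s=u^2v$, so $s>\bar s\iff v>u$. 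Writing $C_s=\{m\in[n]:c(m)=s\}$ we get a partition $[n]=\bigsqcup_s C_s$ with $|C_s|=\lfloor (n/s)^{1/3}\rfloor$, and the key observation is that $a_1a_2$ is a perfect cube precisely when $c(a_2)=\overline{c(a_1)}$. Hence the pairs $a_1<a_2$ with $a_1a_2=x^3$ are exactly: the pairs inside $C_1$ (any two distinct perfect cubes have a perfect cube as product), and the pairs with one endpoint in $C_s$ and the other in $C_{\bar s}$ for cubefree $s\ne1$.

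Setting $T=\{s\text{ cubefree}:s>\bar s\}=\{uv^2:u,v\text{ squarefree},\ \gcd(u,v)=1,\ u<v\}$, I would deduce from the previous paragraph that every $A\in\Gamma_{2,3}$ meets each $C_s\cup C_{\bar s}$ ($s\ne1$) in at most $\max(|C_s|,|C_{\bar s}|)$ elements and meets $C_1$ in at most one, so
\[
n-F_{2,3}(n)\ \ge\ \sum_{s\in T,\ s\le n}\Big\lfloor (n/s)^{1/3}\Big\rfloor\ \ge\ \#\big(T\cap[1,n]\big),
\]
while the explicit set $A_0=\{m\in[n]:c(m)<\overline{c(m)}\}$ belongs to $\gamma_{2,3}$ and has complement $C_1\cup\bigsqcup_{s\in T,\,s\le n}C_s$, whence
\[
n-f_{2,3}(n)\ \le\ \lfloor n^{1/3}\rfloor+\sum_{s\in T,\ s\le n}\Big\lfloor (n/s)^{1/3}\Big\rfloor.
\]
Since $f_{2,3}\le F_{2,3}$, the theorem reduces to the two estimates $\sum_{s\in T,\ s\le n}s^{-1/3}\ll n^{1/3}$ (giving the upper bound $n-f_{2,3}(n)\ll n^{2/3}$) and $\#(T\cap[1,n])\gg n^{2/3}$ (giving $n-F_{2,3}(n)\gg n^{2/3}$ for all large $n$).

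For the first estimate I would use the parametrization $s=uv^2$: the conditions $s\le n$, $u<v$ force $v\le n^{1/2}$ and $u\le\min(v,n/v^2)$, and splitting the sum at $v=n^{1/3}$ bounds the range $v\le n^{1/3}$ by $\sum_{v\le n^{1/3}}v^{-2/3}\cdot O(v^{2/3})=O(n^{1/3})$ and the range $n^{1/3}<v\le n^{1/2}$ by $\sum_v v^{-2/3}\cdot O\big((n/v^2)^{2/3}\big)=O\big(n^{2/3}\sum_{v>n^{1/3}}v^{-2}\big)=O(n^{1/3})$. For the second estimate I would exhibit many elements of $T$ directly: taking $v$ squarefree in $(n^{1/3}/2,\,n^{1/3}]$ and $u$ squarefree in $[1,\,n^{1/3}/4]$ with $\gcd(u,v)=1$, each $s=uv^2$ is cubefree, lies in $T$, is at most $n/4$, and distinct admissible pairs $(u,v)$ yield distinct $s$. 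Thus $\#(T\cap[1,n])$ is at least the number of coprime pairs of squarefree integers in a box of side $\asymp n^{1/3}$, and the step I expect to require the most care is showing that this is $\gg n^{2/3}$ — equivalently, that a positive proportion of all integer pairs in such a box are coprime pairs of squarefree numbers, i.e. that the Euler product $\prod_p\bigl(1-3p^{-2}+2p^{-3}\bigr)$ is positive. This is a routine but slightly delicate sieve computation (via Möbius inversion and the mean value of $\mu^2$, or a hyperbola-type argument); note that a cheaper choice such as letting $v$ range only over primes would lose a logarithmic factor, so the squarefree parametrization is what makes the bound sharp. Granting the count, one obtains $n-F_{2,3}(n)\gg n^{2/3}$ and hence $n-f_{2,3}(n)\gg n^{2/3}$, which together with the upper bound completes the proof.
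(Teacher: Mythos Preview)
Your proof is correct and follows essentially the same approach as the paper: both organize $[n]$ by cubefree part, pair each class $C_s$ with its ``opposite'' $C_{\bar s}$, parametrize cubefree $s$ as $uv^2$ with $u,v$ squarefree and coprime, and bound the same sum $\sum_{s\in T,\,s\le n}\lfloor(n/s)^{1/3}\rfloor$ for the upper estimate. The one place where the paper is slicker is the lower bound: rather than counting $\#(T\cap[1,n])$ and sieving for \emph{coprime} squarefree pairs as you do, the paper observes that for \emph{any} squarefree $a\le b<n^{1/3}$ (coprime or not) the pair $\{a^2b,\,ab^2\}\subseteq[n]$ has cube product, and distinct $\{a,b\}$ give disjoint pairs because the squarefree part of $ab^2$ is exactly $a$; this yields $n-f_{2,3}(n)\ge\binom{Q(n^{1/3})}{2}\gg n^{2/3}$ using only the density $6/\pi^2$ of squarefree integers, bypassing the Euler-product computation you flagged as the delicate step.
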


%%%% k=3
For the case $k=3$ we prove that $f_{3,3}(n)/n$ converges to a constant $c_{3,3}\in (0,1)$, which we can approximate (theoretically to arbitrary precision):
\begin{theorem}\label{thm-k=3-f}
There exists a constant $0.6224\leq c_{3,3} \leq 0.6420$ such that  $$f_{3,3}(n)=(c_{3,3}+o(1))n.$$
\end{theorem}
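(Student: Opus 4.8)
The plan is to recast everything in the group $G=\bigoplus_p\mathbb{Z}/3\mathbb{Z}$ of ``cube classes'' and then to trap $f_{3,3}(n)/n$ between a non-decreasing and a non-increasing sequence of explicitly computable quantities having a common limit $c_{3,3}$.

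\emph{Reduction.} Write $\kappa(m)=\prod_p p^{v_p(m)\bmod 3}$ for the cube-free kernel of $m$; then $\kappa(m)\mid m$ and $abc$ is a perfect cube iff $\kappa(a)\kappa(b)\kappa(c)$ is. If $A\in\gamma_{3,3}$ then $A$ contains at most one integer of each cube class (two distinct integers $a\neq b$ with $\kappa(a)=\kappa(b)$ would give the non-trivial solution $a\cdot a\cdot b=x^3$), so $m\mapsto\kappa(m)$ is injective on $A$ and sends it to a set of cube-free integers in $[n]$ no three distinct members of which multiply to a cube; the converse is immediate. Hence $f_{3,3}(n)=M(n)$, the maximal size of a set of cube-free integers in $[n]$ no three of which have product a cube. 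Identifying a cube-free integer with its exponent vector $(v_p(m)\bmod 3)_p\in G$, the forbidden triples of distinct elements are exactly the affine lines of $G$, so $M(n)$ is the largest \emph{cap} (line-free set) contained in $G_n:=\{g\in G:\kappa(g)\le n\}$.

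\emph{Truncation and the upper bound.} Fix $K$, and for cube-free $m$ write $m=v\cdot t$ with $v\mid p_1\cdots p_K$, $\gcd(t,p_1\cdots p_K)=1$, letting $\pi_K(m)\in\mathbb{F}_3^{K}$ record the exponents mod $3$ at $p_1,\dots,p_K$. For a cap $S$, two facts apply. (i) If $g_1,g_2,g_3\in\mathbb{F}_3^{K}$ are distinct, $g_1+g_2+g_3=0$, and the integers $v(g_1)t,v(g_2)t,v(g_3)t$ all lie in $S$, then $v(g_1)v(g_2)v(g_3)$ is a cube (each prime's exponent-sum is $0,3$ or $6$), so these three form a forbidden triple; thus for each $t$ the set of $\pi_K$-values occurring with that $t$-part is a cap in $\mathbb{F}_3^{K}$, of size at most the cap number $r_K$. (ii) For each $g$, the $t$-parts occurring with value $g$ form a cap among cube-free integers $\le n/v(g)$ coprime to $p_1\cdots p_K$ — a rescaled instance of the same problem. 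Summing (i) over $t$, and feeding (ii) back in at an auxiliary scale tending slowly to infinity, yields $M(n)\le(\Lambda_K+o_K(1))n$, where $\Lambda_K$ is the value of a finite optimisation problem in the data $r_K$, the densities of the relevant residue classes, and — through (ii) — $c_{3,3}$ itself; this $\Lambda_K$ is non-increasing in $K$.

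\emph{Construction and conclusion.} For the lower bound I would build, for each $K$, a valid set from integers whose $p_1\cdots p_K$-part ranges over a well-chosen configuration in $\mathbb{F}_3^{K}$ — permitting exponent $2$ on a sub-family thin enough that the collisions it creates are repaired by deleting a negligible set, with squarefree behaviour beyond $p_K$ — giving $M(n)\ge(\lambda_K-o_K(1))n$ with $\lambda_K$ non-decreasing; the squarefree integers already give $\lambda_K\ge 6/\pi^2$. One then checks $\lim_K\lambda_K=\lim_K\Lambda_K=:c_{3,3}\in(0,1)$, whence $f_{3,3}(n)=(c_{3,3}+o(1))n$, and evaluating $\lambda_{K_0}$ by a finite search for a small $K_0$ and bounding $\Lambda_{K_1}$ for a small $K_1$ produces $0.6224\le c_{3,3}\le 0.6420$. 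The main obstacle is the upper bound, specifically the self-similarity in (ii): one must control the ``mixed'' forbidden triples whose underlying cube relation genuinely involves primes beyond $p_K$, and then show that the family of optimisation problems bounding $M(n)$ from above and the constructions bounding it from below really do close up to the \emph{same} constant — which is where the cap-set combinatorics and the number theory (sums over smooth numbers, counting cube-free integers in arithmetic progressions, Mertens-type estimates) have to be interlocked with care.
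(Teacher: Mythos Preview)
Your framework is the paper's --- reduction to cube-free integers, smooth/rough decomposition at a fixed level, caps in $\mathbb{F}_3^{K}$ on the smooth side, squarefree behaviour on the rough side for the construction --- but you have overcomplicated the upper bound and left the convergence $\lim\lambda_K=\lim\Lambda_K$ as an unproved assertion. The paper never needs your self-referential device (ii). The missing refinement is already inside (i): for a fixed rough part $a'$, the admissible smooth parts lie in
\[
L_r(\lfloor n/a'\rfloor):=\bigl\{p_1^{\alpha_1}\cdots p_r^{\alpha_r}\le n/a':\alpha_i\in\{0,1,2\}\bigr\},
\]
so the correct bound on that fibre is not the global cap number $r_K$ but the \emph{restricted} cap number $s_r(i):=\max\{|S|:S\subseteq L_r(i)\text{ is line-free}\}$, which depends on $i=\lfloor n/a'\rfloor$. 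Summing $s_r(\lfloor n/a'\rfloor)$ over all $p_{r+1}$-rough $a'\le n$ gives directly $\limsup M(n)/n\le\gamma_r$ for an explicit finite quantity $\gamma_r$ --- no circularity, no reference to $c_{3,3}$, no auxiliary scale.

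For the lower bound the paper takes \emph{the same} optimisation problem: restrict to squarefree rough parts $a'$, and for each such $a'$ choose an optimal cap in $L_r(\lfloor n/a'\rfloor)$. Since any three squarefree integers whose product is a cube must coincide, forbidden triples can occur only within a single rough-part fibre, where the cap condition already excludes them. The density of squarefree $p_{r+1}$-rough integers differs from that of all $p_{r+1}$-rough integers by exactly the factor $\prod_{j>r}(1-1/p_j^2)$ (Prachar, with error $O_r(\sqrt n)$), so $\liminf M(n)/n\ge\beta_r:=\gamma_r\prod_{j>r}(1-1/p_j^2)$. Because $\gamma_r/\beta_r\to 1$ as $r\to\infty$, the limit $c_{3,3}$ exists and equals both. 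This replaces your ``one then checks $\lim\lambda_K=\lim\Lambda_K$'' by a one-line estimate; your (ii), the ``mixed'' cube relations across $p_K$, and the fixed-point reasoning never enter. The numerical bounds then come from computing $s_4(i)$ by brute force.
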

\noindent
An analogous result holds for $F_{3,3}(n)$:
\begin{theorem}\label{thm-k=3-F}
There exists a constant $0.6919\leq C_{3,3} \leq 0.7136$ such that  $$F_{3,3}(n)=(C_{3,3}+o(1))n.$$
\end{theorem}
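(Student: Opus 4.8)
The plan is to translate property $P_{3,3}$ into a statement about cap sets in the group $G=\bigoplus_p\mathbb{Z}/3\mathbb{Z}$, and then to analyse the resulting weighted extremal problem, reusing the machinery behind Theorem~\ref{thm-k=3-f}. Writing each $a\le n$ as $a=rb^3$ with $r$ cube-free, attach to $a$ the exponent vector $v(a)=(e_p(a)\bmod 3)_p\in G$; it depends only on $r$, and $r\leftrightarrow v(r)$ is a bijection between cube-free integers and finitely supported vectors, with $a_1a_2a_3=x^3\iff v(a_1)+v(a_2)+v(a_3)=0$ for distinct $a_1<a_2<a_3$. If two of the three share a cube-free part, this forces the third to share it as well; in particular any three distinct elements with common cube-free part $r$ multiply to a cube (e.g.\ $r\cdot 8r\cdot 27r=(6r)^3$). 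Hence $A\in\Gamma_{3,3}$ iff (i) the set $S\subseteq R(n):=\{\text{cube-free }r\le n\}$ of cube-free parts occurring in $A$ is a \emph{cap set} (no three distinct vectors summing to $0$, i.e.\ no nontrivial $3$-term progression), and (ii) $A$ uses at most two elements from each cube-free class. Conversely, any cap $S\subseteq R(n)$ together with a choice of $\min\bigl(2,|\{b:rb^3\le n\}|\bigr)$ elements from each $r\in S$ lies in $\Gamma_{3,3}$. Since a cube-free $r$ has two multiples in $[n]$ exactly when $r\le n/8$, this gives the exact identity
\[
F_{3,3}(n)=\max\Bigl\{\,|S|+|S\cap[1,n/8]|\ :\ S\subseteq R(n)\ \text{a cap set}\,\Bigr\}.
\]

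Next I would prove that $F_{3,3}(n)/n$ converges. From $|S|\le f_{3,3}(n)$ and the fact that $S\cap[1,n/8]$ is itself a cap in $R(n/8)$, together with Theorem~\ref{thm-k=3-f}, one gets at once $c_{3,3}\le \liminf F_{3,3}(n)/n\le \limsup F_{3,3}(n)/n\le \tfrac98 c_{3,3}$. To upgrade this to a genuine limit I would exploit the multiplicative self-similarity of the cap problem: decomposing a cap according to the residue $e_2\bmod 3$ expresses it as a disjoint union $S_0\sqcup 2S_1\sqcup 4S_2$ of (rescaled) caps of odd cube-free integers subject only to a ``tri-coloured'' cross-condition, and iterating over the small primes shows that the optimal density of a cap of cube-free integers inside a fixed multiplicative window $(N,8N]$ stabilises as $N\to\infty$. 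The value of $F_{3,3}(n)/n$ is then governed by a finite optimisation describing how an extremal cap distributes its mass between $[1,n/8]$ and $(n/8,n]$, whose optimum is a well-defined constant $C_{3,3}$; this step is routine given the corresponding analysis for $c_{3,3}$ but does need some care.

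For the lower bound $C_{3,3}\ge 0.6919$ I would give an explicit construction. The squarefree integers form a cap — at every prime the three exponents lie in $\{0,1\}$, so a vanishing sum forces equality — but they give only $F_{3,3}(n)\ge\tfrac98\cdot\tfrac{6}{\pi^2}n\approx 0.684\,n$, which is not enough. Instead I would start from a cap achieving the bound $c_{3,3}\ge0.6224$ of Theorem~\ref{thm-k=3-f} and modify it in the lowest eighth, where each class is worth $2$: since the doubly-counted part and the singly-counted part must jointly be a cap (the only cross-obstructions being that $-(v_a+v_b)$ is absent in the high part for $a,b$ in the low part, plus a dual constraint on pairs), one can, with a suitably chosen template on a finite set of small primes, take a denser cap on $[1,n/8]$ at the cost of a controlled thinning on $(n/8,n]$, so that the total weight $2|S\cap[1,n/8]|+|S\cap(n/8,n]|$ exceeds $0.6919\,n$; optimising the finite template data gives the stated value.

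The upper bound $C_{3,3}\le 0.7136$ is where I expect the real difficulty. It must beat even $F_{3,3}(n)\le\tfrac98 f_{3,3}(n)\le\tfrac98 c_{3,3}n$ (which with the known bound on $c_{3,3}$ only yields $\approx 0.723\,n$), so one has to quantify the loss coming from the interaction between $S\cap[1,n/8]$ and $S\cap(n/8,n]$: forcing the low part to be as large as an extremal cap forbids a positive proportion of exponent vectors — and many pair-sum patterns — in the high part, and symmetrically. I would encode this as a weighted inequality summed over the lines of $G$ supported on $R(n)$, essentially a linear-programming relaxation of the two-scale cap problem, calibrated so that the cross-constraints bring the bound down to $0.7136$. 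Refining the relaxation (more small primes, more lines) would, exactly as for $c_{3,3}$, pin $C_{3,3}$ down to arbitrary precision.
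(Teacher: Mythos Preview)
Your reformulation is correct and matches the paper's starting point: $A\in\Gamma_{3,3}$ is equivalent to choosing a cap $S$ among cube-free parts and at most two representatives per class, so that $F_{3,3}(n)=\max_S\bigl(|S|+|S\cap[1,n/8]|\bigr)$; the paper phrases this as a weighted $3$AP-free problem (weight $2$ for $s\le i/8$, weight $1$ otherwise).

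The gap is in how you propose to handle convergence and, especially, the upper bound. You treat these as separate and difficult --- a self-similarity/stabilisation argument for the limit, and an unspecified LP relaxation with ``cross-constraints'' for $C_{3,3}\le 0.7136$, which you yourself flag as ``the real difficulty''. The paper's key device collapses all three tasks into one. Write each cube-free $r\le n$ as a $p_r$-smooth part times a $p_{r+1}$-rough part $a'$. For each fixed $a'$, the smooth parts appearing in $A$ form a weighted cap in $\mathbb{F}_3^r$ of total weight at most the finite optimum $S_r(\lfloor n/a'\rfloor)$; summing over all rough parts $a'$ gives $|A|\le \Gamma_r n+O_r(1)$ for an explicit constant $\Gamma_r$. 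For the lower bound one simply restricts the rough part $a'$ to be \emph{squarefree}: squarefree integers are themselves a cap, so there are no cross-constraints whatsoever between different rough parts, and attaching to each such $a'$ an optimal weighted cap on the smooth coordinates gives $|A|\ge B_r n-O_r(\sqrt n)$ with $B_r=\Gamma_r\prod_{j>r}(1-p_j^{-2})$. Since $\Gamma_r/B_r\to 1$, the limit $C_{3,3}=\inf_r\Gamma_r=\sup_r B_r$ exists automatically; taking $r=4$ and computing a single table of values $S_4(i)$ produces both numerical bounds $0.6919$ and $0.7136$ at once.

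So your tri-coloured cross-analysis and LP relaxation are unnecessary: the squarefree-rough-part trick eliminates the interaction you are worried about, and the upper bound is governed by the \emph{same} finite optimisation as the lower bound. Without this observation your upper-bound sketch does not actually produce $0.7136$, or any specific number, and your convergence argument remains incomplete.
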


%%%% k=4
In the case $k=4$ we show that for large $n$, the following bounds hold. Our proofs generalize and extend ideas from \cite{ESS} used for the estimation of $F_{3,2}(n)$.
\begin{theorem}\label{thm-k=4}
Let $\eps > 0$. There exists some $n_0(\eps)$ such that for every $n \geq n_0(\eps)$ we have
$$\frac{n}{(\log n)^{2+\eps}}<n-F_{4,3}(n)\le n-f_{4,3}(n)<\frac{n}{(\log n)^{1-\frac{e\log 3}{2\sqrt{3}}-\eps}}.$$
\end{theorem}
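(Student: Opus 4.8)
The middle inequality $n-F_{4,3}(n)\le n-f_{4,3}(n)$ is immediate from $f_{4,3}\le F_{4,3}$, so the content lies in the two outer bounds, and both will follow the strategy of Erd\H{o}s, S\'ark\"ozy and T.~S\'os for $F_{3,2}$ with the squarefree part replaced by the \emph{cubefree part} and $\mathbb{F}_2$ by $\mathbb{F}_3$. Concretely, to $a\in\zz^+$ we attach its cubefree signature $\nu(a)=(v_p(a)\bmod 3)_p\in V:=\bigoplus_p\mathbb{F}_3$; then $a_1a_2a_3a_4$ is a perfect cube if and only if $\nu(a_1)+\nu(a_2)+\nu(a_3)+\nu(a_4)=0$ in $V$. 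Since $3\nmid 4$ there are no trivial solutions for $k=4$, $d=3$, so $A\in\gamma_{4,3}$ precisely when no four elements of $A$ (repetitions allowed) have signatures summing to $0$.

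For the upper bound the plan is to build a large $\gamma_{4,3}$-set of the shape $A=[n]\setminus B$. Guided by the target exponent $\frac{e\log 3}{2\sqrt 3}-1$, we let $B$ consist of all $m\le n$ for which a suitably weighted count of the prime factors of $m$ in an appropriately chosen range exceeds a threshold $\alpha\log\log n$; a Rankin-type estimate — exactly as in the ESS treatment of $f_{3,2}$, where the parameter $e/2$ enters — then gives $|B|=n(\log n)^{\alpha-\alpha\log\alpha-1+o(1)}$, and the optimal $\alpha$ drives the exponent to $\frac{e\log 3}{2\sqrt 3}-1$. The substantive point is to check $A\in\gamma_{4,3}$: one assumes a relation $\nu(a_1)+\dots+\nu(a_4)=0$ with $a_i\in A$, inspects the primes in the chosen range (in which every $a_i$ has few prime factors because $a_i\notin B$), and argues that any complete cancellation there would push one of the $a_i$ back into $B$. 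Calibrating the range, the weighting, and $\alpha$ so that this implication holds while $|B|$ remains as small as claimed is the technical core of this direction.

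For the lower bound one must show that every $A\seq[n]$ with $|[n]\setminus A|\le n/(\log n)^{2+\eps}$ contains four distinct elements whose product is a cube. Partition $[n]$ into cubefree fibers $\mathcal{F}_t=\{tj^3\le n\}$ ($t$ cubefree); all elements of $\mathcal{F}_t$ carry the signature $\nu(t)$, so, writing $t^{-1}$ for the cubefree number with $\nu(t^{-1})=-\nu(t)$, a set with property $P_{4,3}$ must simultaneously avoid: two elements of $\mathcal{F}_t$ together with two of $\mathcal{F}_{t^{-1}}$; two elements of $\mathcal{F}_t$ together with single elements of $\mathcal{F}_{t_1}$ and $\mathcal{F}_{t_2}$ whenever $\nu(t_1)+\nu(t_2)=\nu(t)$; four distinct fibers whose signatures sum to $0$; and the patterns involving perfect cubes (of which $A$ contains at most three). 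These restrictions amount to saying that the family of fibers met by $A$ is ``$4$-term zero-sum free'' and that the fibers $A$ meets at least twice obey strong further constraints; one then shows that satisfying all of them forces $A$ to omit at least $n/(\log n)^{2+\eps}$ integers, by summing the fiber sizes $\lfloor(n/t)^{1/3}\rfloor$ over the cubefree $t$ on which $A$ is forced to be essentially empty. I expect this last counting to be the main obstacle: naive bookkeeping only yields a bound of shape $n^{2/3+o(1)}$ — it essentially uses the cube quadruples $\{pq,qr,rp,pqr\}$ built from primes near $n^{1/3}$ and a Steiner-type packing — and pushing it to $n/(\log n)^{2+\eps}$ demands letting the relevant fiber indices range over wide, unbalanced intervals while controlling the overlaps, in the same delicate manner as the ESS lower bound for $F_{3,2}$.
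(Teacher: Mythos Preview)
Your proposal is a plan, not a proof, and both directions have genuine gaps when compared to what the paper actually does.

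\textbf{Lower bound.} Your cubefree-fiber approach is the wrong track, and you already sense this: as you say, naive bookkeeping only produces $n^{2/3+o(1)}$, and you offer no mechanism to reach $n/(\log n)^{2+\eps}$. The paper bypasses fibers entirely. It fixes the set $D$ of integers $d\le n^{1/3}$ with $\Omega(d)\le\tfrac13\log\log n$ (so $|D|\gg n^{1/3}/(\log n)^{1-\frac13+\frac13\log\frac13+\eps/3}$ by Hardy--Ramanujan), and forms the $3$-uniform hypergraph $H$ on $D$ whose edges are the triples $\{d_i,d_j,d_k\}$ with $d_id_jd_k\in A$. If $H$ contained a $K_4^3$ on $d_{i_1}<\dots<d_{i_4}$, then the four distinct products $d_{i_1}d_{i_2}d_{i_3},\dots,d_{i_2}d_{i_3}d_{i_4}$ lie in $A$ and multiply to $(d_{i_1}d_{i_2}d_{i_3}d_{i_4})^3$, contradicting $A\in\Gamma_{4,3}$. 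Hence $H$ is $K_4^3$-free, and de~Caen's Tur\'an bound gives $\gg|D|^3$ missing edges. Each missing edge yields an $m=d_id_jd_k\notin A$, and the multiplicity of such representations is at most $\tau_3(m)\le 3^{\Omega(m)}\le(\log n)^{\log 3}$ since $\Omega(m)\le\log\log n$. Dividing, one gets $n-|A|\gg |D|^3/(\log n)^{\log 3}\gg n/(\log n)^{2+\eps}$. The key idea you are missing is the hypergraph Tur\'an argument on \emph{factorisations into three small pieces}; zero-sum conditions on cubefree signatures do not enter.

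\textbf{Upper bound.} Here your intuition (``exclude integers whose prime-factor count in some range exceeds a threshold'') points in the right direction, but the crucial structural step is absent. The paper does not use signature cancellation at all. Instead it proves a factorisation lemma (Lemma~\ref{k=4-l1}): if $a_1,a_2,a_3,a_4\in[n/\log n,n]$ each satisfy ``$d^2\mid a_i\Rightarrow d\le\log n$'' and $a_1a_2a_3a_4$ is a perfect cube, then every $a_i$ can be written as $u_iv_iw_i$ with each factor in $[n^{1/3}/(\log n)^{16},\,n^{1/3}(\log n)^{16}]$. This is done by stripping the cube parts, reducing to squarefree $e_i$ with $e_1e_2e_3e_4$ a cube, and observing that then $e_i$ is exactly the product of the three pairwise $\gcd(e_i,e_j,e_k)$'s, each of size $n^{1/3+o(1)}$. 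Thus the set $A_n$ of $a\in[n/\log n,n]$ with small square part that \emph{cannot} be so factored lies in $\gamma_{4,3}$. The complement $B$ of integers that \emph{can} be written as $uvw$ with three balanced factors is then bounded (Lemma~\ref{k=4-l2}) by splitting on whether $\min\Omega(u),\Omega(v),\Omega(w)$ exceeds $\tfrac{e}{3^{3/2}}\log\log n$; one piece has $\Omega(b)>\tfrac{e}{\sqrt3}\log\log n$ and is handled by the large-deviation bound for $\Omega$, the other is handled by summing over $(u,v)$ and applying the same bound to the count of admissible $w$. The exponent $1-\tfrac{e\log 3}{2\sqrt3}$ drops out of the optimisation $y\mapsto 1+y\log y-y$ at $y=e/\sqrt3$. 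Your ``Rankin-type threshold on a weighted prime count'' never gets to the point where $A\in\gamma_{4,3}$ is actually verified; the paper's route via the balanced-triple factorisation is what makes that verification clean.
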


%exponent: ~0.1379-eps

For general $k\nmid d $ we can improve the previously known best lower bound (Theorem~\ref{thm-ess-odd} (2)). Let 
$$
c_0:=\max_{\frac{1}{3}\le \alpha \le \frac{1}{2}}\left( -\log \alpha +(\log (1-\alpha )-\log \alpha )\log \alpha - \int_{\alpha}^{1-\alpha }\frac{\log (1-t)}{t}dt \right)= 0.82849\dots 
$$
Note that the maximum is attained at $\alpha=(1+\sqrt{e})^{-1}$ and $$c_0=\frac{\pi^2}{6}-\log^2(1+\sqrt{e})+\log(1+\sqrt{e})-2\text{Li}_2\left( \frac{1}{1+\sqrt{e}} \right).$$

\begin{theorem}\label{lowerbound}
For every $d\ge 2$ and $d \nmid k$, we have
$$
\liminf _{n\to \infty }\frac{f_{k,d}(n)}{n}\ge c_0=0.828\dots 
$$
\end{theorem}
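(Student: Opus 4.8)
The plan is to exhibit, for every $\alpha\in[1/3,1/2]$, an explicit set $A_\alpha\seq[n]$ of size $(g(\alpha)+o(1))n$ that admits no solution of $a_1\cdots a_k=x^d$ at all, where
\[
g(\alpha)=-\log\alpha+(\log(1-\alpha)-\log\alpha)\log\alpha-\int_\alpha^{1-\alpha}\frac{\log(1-t)}{t}\,dt ,
\]
and then to conclude $\liminf_n f_{k,d}(n)/n\ge\sup_{\alpha\in[1/3,1/2]}g(\alpha)=c_0$. This construction generalises the one behind Theorem~\ref{thm-ess-odd}(2), which corresponds to $d=2,\ \alpha=1/2$, where $g(1/2)=\log2$.

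Let $A_\alpha$ be the set of $m\le n$ having exactly one prime factor exceeding $n^\alpha$, counted with multiplicity; equivalently $m=pu$ with $p$ prime, $p>n^\alpha$, $p\|m$, and $P(u)\le n^\alpha$ (here $P(\cdot)$ denotes the largest prime factor). The point is that this factorisation is forced: $p=P(m)$ and $u$ is the $n^\alpha$-smooth part of $m$. To check $A_\alpha$ contains no solution of $a_1\cdots a_k=x^d$, assume one exists and write $a_i=p_iu_i$ accordingly. For any prime $q>n^\alpha$, each $u_i$ is $n^\alpha$-smooth, so $v_q(a_i)=v_q(p_i)$, which is $1$ when $p_i=q$ and $0$ otherwise; hence $v_q(a_1\cdots a_k)=\#\{i:p_i=q\}$, and since $a_1\cdots a_k$ is a $d$-th power, $d\mid\#\{i:p_i=q\}$. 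Summing over all primes $q>n^\alpha$ and using that each $a_i$ contributes precisely one such prime gives $d\mid k$, contradicting $d\nmid k$. A trivial solution would also force $d\mid k$, so none of those occur either; thus $A_\alpha\in\gamma_{k,d}$ and $f_{k,d}(n)\ge|A_\alpha|$.

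To estimate $|A_\alpha|$, group the elements by their large prime: $|A_\alpha|=\sum_{n^\alpha<p\le n}\Psi(n/p,\,n^\alpha)$, where $\Psi(x,y)$ counts $y$-smooth integers up to $x$. Applying the standard estimate $\Psi(x,y)=x\,\rho(\tfrac{\log x}{\log y})(1+o(1))$ — uniform here because for $\alpha\ge1/3$ the exponent $\tfrac{\log(n/p)}{\log n^\alpha}$ stays in $[0,2]$, where $\rho\equiv1$ on $[0,1]$ and $\rho(u)=1-\log u$ on $[1,2]$ — and converting the sum into an integral over $s=\log_np$ yields $|A_\alpha|=\bigl(\int_\alpha^1 s^{-1}\rho(\tfrac{1-s}{\alpha})\,ds+o(1)\bigr)n$; splitting at $s=1-\alpha$, substituting the piecewise form of $\rho$, and integrating termwise (with the change of variables $t\mapsto1-t$ in one piece) produces exactly $g(\alpha)n(1+o(1))$. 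Finally, differentiation gives $g'(\alpha)=\bigl(2\log\tfrac{1-\alpha}{\alpha}-1\bigr)/\alpha$, which vanishes precisely at $\alpha=(1+\sqrt e)^{-1}\in(1/3,1/2)$ (positive before, negative after), and evaluating $g$ there, using the reflection formula $\text{Li}_2(z)+\text{Li}_2(1-z)=\tfrac{\pi^2}{6}-\log z\log(1-z)$ for the remaining integral, gives the stated closed form for $c_0$.

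The conceptual content — the rigid one-large-prime set, and the obstruction $\sum_q\#\{i:p_i=q\}=k$ versus $d\nmid k$ — is short. The main work is arithmetic rather than conceptual: making the smooth-number asymptotic uniform over all of $n^\alpha<p\le n$ (only the regime $p$ close to $n$, where $n/p$ is bounded, needs a separate but immediate treatment), controlling the passage from the sum to the integral, and carrying out the dilogarithm computation. I do not anticipate a genuine obstacle here, only a moderately lengthy calculation.
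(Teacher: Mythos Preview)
Your construction and the key obstruction argument are identical to the paper's: both take $A_\alpha$ to be the integers $m\le n$ with exactly one prime factor exceeding $n^\alpha$ (counted with multiplicity), and both rule out solutions by observing that the total multiplicity of large primes in $a_1\cdots a_k$ is exactly $k$, which would have to be divisible by $d$.

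The only difference is cosmetic, in the density computation. The paper writes $|A_\alpha|=\sum_{n^\alpha<p\le n}\lfloor n/p\rfloor-\sum_{n^\alpha<p,q}\lfloor n/(pq)\rfloor$ and evaluates directly via Mertens and the Prime Number Theorem; you instead write $|A_\alpha|=\sum_{n^\alpha<p\le n}\Psi(n/p,n^\alpha)$ and invoke the Dickman asymptotic. Since $\alpha\ge 1/3$ forces the Dickman argument into $[0,2]$, where $\rho(u)=1-\log u$ on $[1,2]$ is itself proved by precisely the one-step inclusion--exclusion the paper carries out, this is a repackaging rather than a genuinely different route. Your extra verification of $g'(\alpha)=(2\log\tfrac{1-\alpha}{\alpha}-1)/\alpha$, the location of the optimum at $\alpha=(1+\sqrt e)^{-1}$, and the dilogarithm closed form are details the paper states without proof, so that part is a welcome addition.
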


\begin{theorem}\label{upperbound}
For every $d\ge 2$, $d\nmid k$, we have
$$
\frac{n}{(\log n)^d}\ll _{k,d} n-F_{k,d}(n).
$$
\end{theorem}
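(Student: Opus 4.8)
The goal is a lower bound on $n - F_{k,d}(n)$, i.e.\ to show that \emph{any} set $A \subseteq [n]$ with $|A| \ge n - cn/(\log n)^d$ must contain distinct $a_1 < \dots < a_k$ with $a_1\cdots a_k$ a perfect $d$-th power. Equivalently, the complement $B = [n]\setminus A$ has $|B| \le cn/(\log n)^d$, and we want to show $B$ cannot destroy all such configurations. The natural strategy, mirroring the Erdős–Sárközy–T.~Sós argument for $n - F_{3,2}(n) \gg n/(\log n)^2$ (Theorem~\ref{thm-ess-odd}(3)), is to produce a large family of pairwise disjoint (or almost-disjoint) $k$-term configurations whose product is a $d$-th power, so many that a small $B$ cannot hit all of them.

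The plan is as follows. First I would exhibit a single ``gadget'': a $k$-tuple of distinct integers whose product is a $d$-th power. Since $d \nmid k$, one cannot simply take $d$ copies of things, so write $k = qd + r$ with $1 \le r \le d-1$; a clean choice is to use $r$ carefully chosen extra factors (e.g.\ built from a fixed set of small primes $p_1,\dots,p_s$ with prescribed exponents summing to a multiple of $d$ in each coordinate) together with $q$ further ``free'' slots that can be filled with an arbitrary integer $m$ repeated in a $d$-th-power-completing pattern — the point is that there is a fixed finite configuration $C_0 \subseteq [M_0]$ and then, for each integer $m$ in a suitable range, a translated/scaled copy using $m, 2m, \dots$ times elements of a bounded pattern, so that the whole product is a perfect $d$-th power. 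Concretely one wants, for each $m$ in an interval of length $\asymp n$, a $k$-set $S_m \subseteq [n]$ with $\prod S_m = \square^d$ and with $S_m \subseteq m\cdot[1,M_0] \cup [1,M_0]$ or similar, so that the $S_m$ for $m$ in a suitable sparse-but-large set are pairwise disjoint except on a bounded ``core''.

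Second, I would count: the number of such configurations that can be made pairwise disjoint outside a bounded core is $\gg n/(\log n)^{d}$ — the loss of $(\log n)^d$ coming from restricting $m$ to integers that are $\tfrac{1}{d}$-free, or squarefree, or in some residue class, or more precisely to $m$ such that $m$ has no prime factor below some threshold so that the $d$-th-power condition is controllable; such a restriction typically costs a factor $(\log n)^{O(1)}$, and the exponent $d$ should come out of requiring $d$ independent such constraints (one per ``level'' of the $d$-th-power completion). Then a set $B$ with $|B| < cn/(\log n)^d$ (for small enough $c$) cannot intersect all of them, so some $S_m$ lies entirely in $A$, giving the desired solution; hence $|A| \le n - \Omega(n/(\log n)^d)$, which is the claim.

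The main obstacle, I expect, is the construction in the first two steps rather than the pigeonhole in the third: one must build genuinely many configurations that are \emph{distinct as $k$-sets}, that \emph{avoid trivial coincidences} (all $a_i$ distinct, which is delicate precisely because $d \nmid k$ forces an ``unbalanced'' leftover that cannot be a product of equal blocks), and that are \emph{sufficiently disjoint} so that destroying them requires $\Omega(n/(\log n)^d)$ deletions — with the exponent $d$ (not $d-\tfrac12$ or $d+1$) emerging from the sharpest available estimate on the number of admissible parameters $m$. I would handle the distinctness by keeping the ``core'' completely fixed and letting only a single large parameter $m$ (coprime to the core and to the other $m$'s, or in distinct residue classes) vary, and handle the count by a sieve/Mertens-type lower bound on integers $m \le n/M_0$ free of prime factors up to $(\log n)^{O(1)}$, squeezing the exponent to exactly $d$.
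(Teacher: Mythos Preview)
Your plan has a genuine gap: the ``fixed gadget scaled by a parameter $m$'' construction cannot work as described. If $C_0=\{c_1,\dots,c_k\}$ has $\prod c_i$ a perfect $d$-th power and you set $S_m=\{mc_1,\dots,mc_k\}$, then $\prod S_m = m^k\prod c_i$, and since $d\nmid k$ this is a $d$-th power only when $m$ itself is a $(d/\gcd(k,d))$-th power --- a polynomial (not logarithmic) restriction, far too few $m$'s. The alternative you mention, keeping a literal fixed core $C_0\subseteq[M_0]$ common to every $S_m$, is worse: deleting the $|C_0|=O(1)$ core elements kills every configuration, so the pigeonhole yields only $n-F_{k,d}(n)\ge O(1)$. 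Nothing in the sketch explains how ``$d$ independent sieve constraints'' would produce exactly $(\log n)^{-d}$; that exponent is not coming from smoothness or coprimality conditions on a single parameter $m$.

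The paper's argument is entirely different and avoids disjoint configurations altogether. One restricts attention to integers of the form $p_{i_1}\cdots p_{i_d}$ with $p_{i_j}$ distinct primes $\le n^{1/d}$; there are $\binom{t}{d}$ such integers where $t=\pi(n^{1/d})\asymp n^{1/d}/\log n$, so $\binom{t}{d}\asymp n/(\log n)^d$ --- this is where the exponent $d$ really comes from. Encode those elements of $A$ of this shape as edges of a $d$-uniform hypergraph $H$ on $t$ vertices. A copy of $K^d_k$ in $H$ yields $k$ distinct elements $a_1=p_{i_1}\cdots p_{i_d},\ a_2=p_{i_2}\cdots p_{i_{d+1}},\ \dots,\ a_k=p_{i_k}p_{i_1}\cdots p_{i_{d-1}}$ (cyclic $d$-blocks) with $a_1\cdots a_k=(p_{i_1}\cdots p_{i_k})^d$. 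Hence $H$ is $K^d_k$-free, and de~Caen's hypergraph Tur\'an bound (Lemma~\ref{Caen}) forces a positive proportion of the $\binom{t}{d}$ possible edges to be missing. Each missing edge is a distinct integer absent from $A$, so $n-|A|\gg_{k,d}\binom{t}{d}\asymp n/(\log n)^d$. The point is not to hit configurations with a small $B$, but to count \emph{elements} (hyperedges) that must be absent; the Tur\'an density, not disjointness, does the work.
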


%%%% k=6
% The case $k=6$ is interesting as it is a multiple of $d=3$, but not 3 itself. Erdős, Sárközy and T. Sós investigated $F_{4,2}$ for the same reason. Here we obtained the following results:
For $k=6$ we obtained the following results:
\begin{theorem}\label{thm-k=6-f}
There exist positive constants $c_1$ and $c_2$ such that 
\[c_1\frac{n^{3/4}}{(\log n)^{3/2}}<f_{6,3}(n)-\pi (n)<c_2\frac{n^{3/4}}{(\log n)^{3/2}}.\]
\end{theorem}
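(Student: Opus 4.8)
The order of magnitude to be proved is exactly that of $f_{4,2}(n)-\pi(n)$, and the plan is to exploit the analogy: both problems ask when a product of $2d$ integers is a perfect $d$-th power ($d=3$ here, $d=2$ there), with the trivial solutions in each case being the splittings into two blocks of $d$ equal terms. Throughout I would use the homomorphism $m\mapsto v(m)\in\bigoplus_p \mathbb{Z}/3\mathbb{Z}$ recording the exponents of $m$ modulo $3$, so that $a_1\cdots a_6$ is a cube iff $v(a_1)+\cdots+v(a_6)=0$, primes mapping to coordinate unit vectors. The lower bound I would get from an explicit construction, the upper bound from a peeling argument in the style of Erd\H{o}s--S\'ark\"ozy--S\'os. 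In both halves the governing phenomenon is the same: a graph of products $qr$ must be $C_4$-free (a multiplicative Sidon condition over $\mathbb{Z}/3\mathbb{Z}$), and the extremal number of $C_4$-free graphs produces the exponent $3/2$ on $\log n$.

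For the lower bound I would take $A=(P\setminus E)\cup B$, where $P$ is the set of primes in $[1,n]$, $E$ the set of primes in $(\tfrac12\sqrt n,\sqrt n\,]$, and $B=\{qr:\{q,r\}\in G\}$ for a $C_4$-free graph $G$ on vertex set $E$ with $\gg|E|^{3/2}$ edges (for instance the loopless part of an Erd\H{o}s--R\'enyi polarity graph on $\le|E|$ vertices). Since every $qr\le n$ and $|E|\asymp\sqrt n/\log n=o(n^{3/4}/(\log n)^{3/2})$, one gets $|A|=\pi(n)+\Omega(n^{3/4}/(\log n)^{3/2})$. To verify $A\in\gamma_{6,3}$, I would take six elements of $A$ with cube product and let $m$ be the number of them lying in $P\setminus E$; counting prime factors with multiplicity forces $12-m\equiv0\pmod 3$, so $m\in\{0,3,6\}$. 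For $m=6$ the solution is trivial. For $m=3$ the three primes from $P\setminus E$ occur in no $B$-element, hence cancel among themselves, so they coincide; the three $B$-elements then form three edges of $G$ with all degrees divisible by $3$, forcing all three equal, so the solution is again trivial. For $m=0$ the six (possibly repeated) edges of $G$ involved have total degree $12$ and all degrees divisible by $3$, hence are supported on at most four vertices of $G$, which induce a $C_4$-free graph; a short case check (the induced graph is a triangle, a path $P_4$, a star $K_{1,3}$, a ``paw'', or sparser) shows the only size-six edge-multisets with all degrees divisible by $3$ on such a graph are the trivial two-block ones.

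For the upper bound, let $A\in\gamma_{6,3}$ and $A_1=A\setminus P$; it suffices to bound $|A_1|\ll n^{3/4}/(\log n)^{3/2}$. I would first discard the boundedly many elements with very short $v$-support (for instance $A$ has at most two cubes, since three cubes $a_1,a_2,a_3$ give the non-trivial solution $a_1a_1a_2a_2a_3a_3=x^3$), then split the remainder according to whether an element has a prime factor above $\sqrt n$ (necessarily unique and to the first power). The $\sqrt n$-smooth part, after repeatedly peeling off largest prime factors, is reduced modulo cubes to a Sidon/$C_4$-free condition on a graph over a set of $\asymp\pi(\sqrt n)$ primes, which caps it at $\ll\pi(\sqrt n)^{3/2}\asymp n^{3/4}/(\log n)^{3/2}$; the part with a large prime factor should be handled by showing, again via the $\gamma_{6,3}$-property, that each large prime can serve as a factor for only boundedly many ``cube-relevant'' elements once one separates out those whose cofactor is a prime still lying in $A$, so that this part is also $\ll n^{3/4}/(\log n)^{3/2}$. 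I expect this last bookkeeping to be the main obstacle: because repetitions are allowed one must exclude every non-two-block way six elements can have cube product, in particular configurations that use primes of $A$ to absorb leftover coordinates and configurations built from elements carrying square or cube divisors (so that $v(a)$ is short while $a$ is large), and one must check that no degree of freedom beyond the primes escapes the $C_4$-free/Sidon control, so that the power of $\log n$ in the savings is exactly $3/2$ and not merely at most $2$. By contrast the lower bound needs nothing beyond the classical existence of dense $C_4$-free graphs.
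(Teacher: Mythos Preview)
Your lower bound is correct and is essentially the paper's construction (primes together with products $qr$ indexed by a $C_4$-free graph on small primes); the paper packages the verification as Lemma~\ref{k=6-l1} with a $\{C_3,C_4\}$-free graph, while you allow triangles and do the degree case analysis directly, but the content is the same.

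The upper bound, however, has a genuine gap. Your opening reduction ``let $A_1=A\setminus P$; it suffices to bound $|A_1|\ll n^{3/4}/(\log n)^{3/2}$'' is simply false. Take
\[
A=\{2p:\ p\text{ prime},\ n/4<p\le n/2\}.
\]
Every six (not necessarily distinct) elements have product $2^6p_1\cdots p_6$, which is a cube only when the multiset $\{p_1,\dots,p_6\}$ is a union of two triples of equal primes, i.e.\ only in the trivial way; hence $A\in\gamma_{6,3}$. Yet $A$ contains no primes at all and $|A|\sim n/(4\log n)$, so $|A\setminus P|\gg n/\log n$, far larger than $n^{3/4}/(\log n)^{3/2}$. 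Thus one cannot hope to bound the number of non-primes in $A$; one must instead compare $|A|$ with $\pi(n)$ while simultaneously accounting for the primes \emph{missing} from $A$. Your subsequent sketch (``peeling off largest prime factors'', ``each large prime can serve as a factor for only boundedly many elements'') does not address this and would, as the same example shows, fail.

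The paper bypasses all of this with a one-line reduction you are missing: if $a_1,a_2,a_3,a_4\in A$ are four \emph{distinct} elements with $a_1a_2=a_3a_4$, then
\[
a_1^2a_2^2a_3a_4=(a_1a_2)^3
\]
is a nontrivial solution (no element has multiplicity $\ge 3$), so $A\notin\gamma_{6,3}$. Hence every $A\in\gamma_{6,3}$ is a multiplicative Sidon set, and the classical bound $|A|\le \pi(n)+O\!\left(n^{3/4}/(\log n)^{3/2}\right)$ for such sets (Erd\H os--S\'ark\"ozy--S\'os) finishes the proof immediately. Your intuition that the governing structure is ``Sidon/$C_4$-free'' is right, but the reduction has to go through the multiplicative Sidon condition on $A$ itself, not through a bound on $A\setminus P$.
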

\begin{theorem}\label{thm-k=6-F}
For $F_{6,3}(n)$ the following holds:
    \[F_{6,3}(n)=(1+o(1))\frac{n\log \log n}{\log n}.\]
\end{theorem}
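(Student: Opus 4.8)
The plan is to establish $F_{6,3}(n)\ge(1-o(1))\,\tfrac{n\log\log n}{\log n}$ and $F_{6,3}(n)\le(1+o(1))\,\tfrac{n\log\log n}{\log n}$ separately, always working in $G:=\mathbb Q^{*}/(\mathbb Q^{*})^{3}\cong\bigoplus_{p}\mathbb Z/3\mathbb Z$ and writing $\phi(m)\in G$ for the class of $m$, so that $a_{1}\cdots a_{k}$ is a cube iff $\sum_{i}\phi(a_{i})=0$; since $G$ is $3$-torsion, any multiset of classes of the form $\{g,g,g,h,h,h\}$ sums to $0$. Write $\pi_{2}(n)$ for the number of integers up to $n$ with exactly two prime factors counted with multiplicity; by Landau's theorem $\pi_{2}(n)\sim\tfrac{n\log\log n}{\log n}$.

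\emph{Lower bound.} I would take
\[
A:=\{\,p\le n:p\text{ prime}\,\}\cup\{\,pq\le n:p,q\text{ prime},\ q<\sqrt n<p\,\}.
\]
The count is easy: the primes contribute $\pi(n)=o\!\big(\tfrac{n\log\log n}{\log n}\big)$, while the second family is the set of all semiprimes $\le n$ with the $\binom{\pi(\sqrt n)+1}{2}=O(n/\log^{2}n)$ semiprimes having both prime factors below $\sqrt n$ removed, so $|A|=(1+o(1))\pi_{2}(n)$. To verify $A\in\Gamma_{6,3}$, suppose $a_{1}<\dots<a_{6}$ in $A$ have cube product and let $j$ be the number of the $a_{i}$ that are prime; counting prime factors of $a_{1}\cdots a_{6}$ with multiplicity gives $j+2(6-j)=12-j\equiv0\pmod 3$, so $j\in\{0,3,6\}$. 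If $j=6$ the product is squarefree with six prime divisors, not a cube. If $j=0$ the six (squarefree, distinct) semiprimes form a simple graph on the primes involved that is bipartite with parts $\{\text{primes}<\sqrt n\}$ and $\{\text{primes}>\sqrt n\}$; every vertex degree is the exponent of that prime in $a_{1}\cdots a_{6}$, hence a positive multiple of $3$, so the graph has at most $\lfloor12/3\rfloor=4$, hence exactly $4$, vertices and is therefore $K_{4}$ — which is not bipartite, a contradiction. If $j=3$, with primes $r_{1},r_{2},r_{3}$ and semiprimes $e_{1},e_{2},e_{3}$, then for each prime $t$ the integer $|\{a:r_{a}=t\}|+|\{b:t\mid e_{b}\}|$ is a positive multiple of $3$ whenever nonzero, and these sum over $t$ to $3+6=9$; so at most three primes $t$ occur, each with value exactly $3$. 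A prime $t<\sqrt n$ collects contributions only from the (at most one) $r_{a}$ equal to $t$ and from the small prime factors of the $e_{b}$, so at least two $e_{b}$ must have $t$ as their small factor; likewise at most one large such $t$ exists; since every prime is above or below $\sqrt n$, at most two such $t$ occur — contradiction.

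\emph{Upper bound.} Let $A\in\Gamma_{6,3}$, $A\subseteq[n]$. First reduce to $|A\cap\phi^{-1}(g)|\le2$ for all $g$: no fibre can have six elements, and two fibres of size $\ge3$ already give a solution, so at most one fibre is rich and, deleting its ($\le5$) elements, every fibre has size $\le2$. Next discard the $a$ whose cubefree part has at most one prime factor: they lie in $\le\pi(n)+\pi(\sqrt n)+1$ fibres, hence number $O(n/\log n)$; likewise the fibres whose cubefree part is $p^{a}q^{b}$ with $\{a,b\}\ne\{1\}$ number $O(n/\log n)$. It remains to bound by $(1+o(1))\pi_{2}(n)$ the number of $a\in A$ whose cubefree part is a product of two distinct primes, or has at least three prime factors.

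The main step is a peeling on the largest prime factor $P(a)$: write $A=\bigsqcup_{p}A_{p}$ with $A_{p}=\{a:P(a)=p\}$, and for $p>\sqrt n$ write $a=pm$ with $m\le n/p<\sqrt n$. Then (i) $A'_{p}:=\{m:pm\in A\}\in\Gamma_{6,3}$ (six distinct $pm_{i}$ with $\prod m_{i}$ a cube is forbidden, as $p^{6}$ is a cube), so $|A_{p}|\le F_{6,3}(\lfloor n/p\rfloor)$; and (ii) for distinct $p,p'>\sqrt n$ there are no triples $T\subseteq A'_{p}$, $T'\subseteq A'_{p'}$ with $(\prod T)(\prod T')$ a cube (else $\{pm:m\in T\}\cup\{p'm':m'\in T'\}$ is forbidden). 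Fact (i) by itself is not enough: under the inductive hypothesis $\sum_{\sqrt n<p\le n}F_{6,3}(n/p)$ is of order $\tfrac{n(\log\log n)^{2}}{\log n}$, larger than the target by a factor $\asymp\log\log n$. It is precisely the cross-constraint (ii) that must recover this factor: if many fibres $A'_{p}$ were much larger than $\pi(n/p)$, then the sets $\{\phi(\text{product of three distinct elements of }A'_{p})\}$ would all be large, while (ii) forces them (up to sign) to be pairwise disjoint inside the bounded portion of $G$ coming from integers below $(n/p)^{3}$, capping how many fibres can be large; arranging the counting so that the surviving total is $(1+o(1))\sum_{\sqrt n<p\le n}\pi(n/p)=(1+o(1))\pi_{2}(n)$ is the step I expect to be the main obstacle. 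The remaining contributions — small $P(a)$ and higher powers of $p$ — would be handled by further peeling together with the same cross-constraints and the bound $F_{6,3}(m)=O(m\log\log m/\log m)$, and the prime number theorem and Mertens' estimates then assemble the pieces.
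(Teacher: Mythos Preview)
Your lower bound is correct and close in spirit to the paper's, though the sets differ: the paper takes only semiprimes $m=pq\in(n/\log n,\,n]$ with $p<q/\log n$, whereas you take all primes together with semiprimes $pq$ satisfying $q<\sqrt n<p$. Both constructions work because any six distinct squarefree semiprimes with cube product must form a $K_4$ on their prime factors; the paper rules out $K_4$ by a size argument ($p_1p_2<n/\log n$), you by bipartiteness. Your inclusion of all primes forces the extra cases $j=3,6$, which you handle correctly.

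The upper bound, however, is not proved. You yourself flag that the cross-constraint (ii) must recover a factor of $\log\log n$ and call this ``the step I expect to be the main obstacle'', but you give no argument that actually performs this recovery; the sketch about disjointness of triple-product images in $G$ is far too vague, and it is not at all clear it can be completed along those lines. The paper bypasses all of this by quoting a theorem of Erd\H{o}s (1964, on multiplicative representations of integers): any $A\subseteq[n]$ with $|A|\ge(1+\delta)\,\frac{n\log\log n}{\log n}$ contains six \emph{distinct} elements with $a_1a_2=a_3a_4=a_5a_6$, and then $a_1a_2a_3a_4a_5a_6=(a_1a_2)^3$ immediately shows $A\notin\Gamma_{6,3}$. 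That one-line reduction is the missing idea; your peeling strategy is in effect attempting to reprove Erd\H{o}s's theorem from scratch inside the argument.
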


Note that Theorem~\ref{thm-k=6-F} refutes Conjecture~\ref{conj-ver} of Verstra\"ete ~\cite{Ver}.

%%%% k=9
\begin{theorem}\label{thm-k=9-f}
For $f_{9,3}(n)$ we have the following bounds:
    \[ \frac{n^{2/3}}{(\log n)^{4/3}} \ll f_{9,3}(n) - \pi(n) \ll n^{2/3}\log n.    \]
    
\end{theorem}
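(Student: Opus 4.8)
Here is how I would try to prove the bounds on $f_{9,3}(n)$.

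\medskip
\noindent\emph{Set‑up and a structural lemma.}
For a positive integer $a$ write $v(a)=(v_p(a)\bmod 3)_p\in\bigoplus_p\mathbb{F}_3$ for the exponent vector reduced mod $3$ and $\kappa(a)=\prod_p p^{v_p(a)\bmod 3}\le a$ for the cube‑free kernel; then $a_1\cdots a_9$ is a perfect cube iff $v(a_1)+\dots+v(a_9)=0$, and a solution is trivial iff each value among the $a_i$ occurs a multiple of $3$ times. I would first record what $A\in\gamma_{9,3}$ (say $|A|\ge 9$) forces: the map $a\mapsto v(a)$ is injective on $A$, since $a\ne b$ with $v(a)=v(b)$ makes $a^8b$ a cube and $\{a,a,a,a,a,a,a,a,b\}$ a nontrivial solution; and $V:=\{v(a):a\in A\}$ contains no set of $3$, $6$ or $9$ distinct vectors summing to $0$. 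Indeed, given such a zero‑sum $v_1+\dots+v_r=0$ with $r\in\{3,6\}$, pick $a_i$ with $v(a_i)=v_i$ and form the nine‑element multiset consisting of $a_1$ with multiplicity $5$ and $a_2,a_3$ with multiplicity $2$ (when $r=3$), respectively $a_1$ with multiplicity $4$ and $a_2,\dots,a_6$ with multiplicity $1$ (when $r=6$); the exponent sum is $3v_1=0$, and some multiplicity is not divisible by $3$, so the solution is a nontrivial cube. (For $r=9$ take the nine distinct elements themselves.)

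\medskip
\noindent\emph{Upper bound.}
Split $V$ by the weight $w(v)=\omega(\kappa(a))$. Weight $0$ contributes at most one element (the cubes). For weight $1$, injectivity allows at most one element per class $\{v=e_p\}$ and $\{v=2e_p\}$, and at most two primes can have both classes occupied, since three such primes $p_1,p_2,p_3$ would give six distinct weight‑$1$ vectors $e_{p_i},2e_{p_i}$ with sum $3(e_{p_1}+e_{p_2}+e_{p_3})=0$; hence the weight‑$\le 1$ part has $\le\pi(n)+O(1)$ elements. The crux is to bound $M:=\#\{a\in A:\omega(\kappa(a))\ge 2\}$ by $O(n^{2/3}\log n)$. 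Every such $a$ is divisible by $pq$ for primes $p<q$, so its smallest kernel prime $p$ is $\le\sqrt n$. I would attach to $A$ the graph $\Gamma$ on primes that records, for each $a$ with $\omega(\kappa(a))\ge 2$, the two smallest prime factors of $\kappa(a)$, and prove by exhibiting short padded cube relations that: (i) $\Gamma$ has no $C_4$ and no $C_6$ — e.g. for a $4$‑cycle with squarefree‑kernel elements $b_1,b_2,b_3,b_4$ the multiset $\{b_1,b_1,b_2,b_3,b_3,b_4,u,u,u\}$ has exponent sum $0$ and is nontrivial for any $u\in A$, and an alternating multiplicity pattern $2,1,2,1,2,1$ handles $C_6$ (the non‑squarefree sign cases are analogous, choosing compatible elements on each edge); (ii) the subgraph of $\Gamma$ induced on the primes lying in $A$ is a matching, because a path $p_1\!-\!q\!-\!p_2$ with $p_1,p_2\in A$ yields the nontrivial cube $\{b_{p_1q},b_{p_1q},b_{p_2q},p_1,p_2,p_2,u,u,u\}$ with exponent sum $3(e_{p_1}+e_{p_2}+e_q)=0$, and similarly a triangle of $\Gamma$‑edges whose three vertices all lie in $A$ is forbidden. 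Writing $P^-$ for the primes $\le n$ missing from $A$, I would then split $E(\Gamma)$ dyadically by the size of the smaller endpoint: for the "balanced'' edges (both endpoints of size $\asymp\sqrt n$) the relevant vertex set has size $O(\sqrt n/\log n)$, so $C_6$‑freeness gives $O((\sqrt n/\log n)^{4/3})=O(n^{2/3}/(\log n)^{4/3})$ of them; the remaining, "unbalanced'' edges are controlled by the matching structure on $A$‑primes together with $C_4$‑freeness and an accounting against $P^-$. This should yield $M=O(n^{2/3}\log n)+O(|P^-|)$, and since the number of composite elements of $A$ is at most $|P^-|+M+O(\sqrt n/\log n)$ we get $|A|\le\pi(n)+O(n^{2/3}\log n)$. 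Kernels with three or more prime factors are treated by running the same argument on the two smallest kernel primes.

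\medskip
\noindent\emph{Lower bound.}
Put $N=\lfloor\sqrt n/2\rfloor$, let $\mathcal P$ be the primes in $(N,2N]$, so $m:=|\mathcal P|=\pi(2N)-\pi(N)\asymp\sqrt n/\log n$, and fix a bipartite graph $G$ on $\mathcal P$ of girth at least $8$ with $\gg m^{4/3}$ edges (incidence graphs of generalized quadrangles provide such graphs). Set $B=\{pq:\{p,q\}\in E(G)\}$ — semiprimes with both factors in $(N,2N]$, hence $\le 4N^2\le n$ — and
\[
A=\bigl(\{\text{primes}\le n\}\setminus\mathcal P\bigr)\cup B .
\]
Then $|A|=\pi(n)-m+|E(G)|=\pi(n)+\Theta(m^{4/3})=\pi(n)+\Theta\!\bigl(n^{2/3}/(\log n)^{4/3}\bigr)$, so it remains to check $A\in\gamma_{9,3}$. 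Given a cube $a_1\cdots a_9$ with $a_i\in A$: for each prime $r\notin(N,2N]$ the exponent of $r$ shows that the number of $a_i$ equal to $r$ is a multiple of $3$, so deleting these triples of equal primes, and also any triple of equal elements of $B$, leaves either nothing (so the solution was trivial) or a nontrivial sub‑multiset of $B$‑elements with cube product. Viewed as a multigraph $H\subseteq G$ with all multiplicities $\le 2$ and all degrees $\equiv 0\pmod 3$, every vertex of $H$ has degree $\ge 3$ and so lies on at least two edges of the support; hence the support is a nonempty subgraph of $G$ with minimum degree $\ge 2$ and therefore contains a cycle, necessarily of length $\ge 8$. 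But then the support already consumes $\ge 8$ of the $\le 9$ available edge‑slots, leaving at most one multiplicity‑$2$ edge, so at least six vertices of that cycle have degree exactly $2\not\equiv 0\pmod 3$ — a contradiction. Thus every cube product of nine elements of $A$ is trivial.

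\medskip
\noindent\emph{Where the difficulty lies.}
The lower bound is a direct construction whose verification is the short "peel the trivial triples, then find a short cycle'' argument above. The upper bound is the hard part: the obvious "projection'' decompositions (by largest prime factor, by smallest prime factor, etc.) only yield bounds of the shape $n/\operatorname{polylog}n$, so one genuinely must combine the even‑cycle‑freeness of the kernel graph, the matching constraint on the primes actually in $A$, and the missing‑prime accounting in a single quantitative estimate; one must also verify that no short configuration beyond $C_4$, $C_6$, the path and the triangle produces a relation forcing a still stronger (and hence incompatible) restriction, and carry the whole analysis through the sign cases for non‑squarefree kernels and through kernels with three or more prime factors. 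I expect that bookkeeping to be the bulk of the work.
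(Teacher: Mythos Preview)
Your lower bound construction and its verification are essentially the paper's: primes together with semiprimes indexed by the edges of a bipartite graph of girth $\ge 8$ on the small primes, checked by peeling off trivial triples and observing that any nonempty remainder would force a cycle of length at most $6$. This is exactly Lemma~\ref{k=6-l1} with $k=3$ combined with Lemma~\ref{exgraph}(2); the only cosmetic difference is which block of small primes you use.

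The upper bound has a genuine gap. All of your forbidden-configuration arguments for the kernel graph $\Gamma$ --- the $C_4$, the $C_6$, the path $p_1\!-\!q\!-\!p_2$, the triangle --- rely on $v(b_i)$ being exactly $e_p+e_q$ for the two recorded primes. As soon as $\omega(\kappa(a))\ge 3$, the vector $v(a)$ carries extra nonzero coordinates, so a sum like $2v(b_1)+v(b_2)+2v(b_3)+v(b_4)+3v(u)$ is no longer $0$, and ``running the same argument on the two smallest kernel primes'' proves nothing about those edges. Since the cube-free integers $\le n$ with $\omega\ge 3$ number $\gg n(\log\log n)^2/\log n$, this is not an error you can absorb into $O(n^{2/3}\log n)$; your ``unbalanced edges via matching plus missing-prime accounting'' is likewise too vague to carry weight. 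The paper avoids the whole issue with a single reduction you do not use: if $A\in\gamma_{9,3}$ then $A$ admits no nontrivial solution to $a_1a_2a_3=b_1b_2b_3$, because once some element has multiplicity exactly one in the six-term multiset, $a_1^2a_2^2a_3^2b_1b_2b_3=(a_1a_2a_3)^3$ is a nontrivial nine-term cube. This is strictly stronger than your ``no $3$, $6$ or $9$ distinct zero-sum vectors'', and it places $A$ in a multiplicative $3$-Sidon-type class. From there the standard Erd\H{o}s decomposition $a=uv$ (with $v\le u$ and either $u\le n^{2/3}$ or $u$ prime), together with $C_4$-freeness of the part with $u>n^{2/3}$ prime, gives $|A|\le\pi(n)+O(n^{2/3}\log n)$ uniformly, with no case split on $\omega(\kappa(a))$ at all.
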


\begin{theorem}\label{thm-k=9-F}
For $F_{9,3}(n)$ we have the following bounds:
    \[ \frac{n^{5/6}}{(\log n)^{5/3}} < F_{9,3}(n) - \left(\pi(n) + \pi\left(\frac{n}{2}\right)\right) \ll n^{5/6}.  \]
    
\end{theorem}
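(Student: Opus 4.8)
\subsection*{Proof plan for Theorem~\ref{thm-k=9-F}}

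The plan is to prove the two bounds by quite different means: the lower bound by an explicit construction that trades a thin range of primes for a Zarankiewicz‑extremal family of semiprimes, and the upper bound by a case analysis on the shape of the prime factorisation of the elements of an extremal set.

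\textbf{Lower bound.}
Start from $A_0=\{p\le n:\ p\text{ prime}\}\cup\{2p:\ p\le n/2\}$, which (as for the main term) lies in $\Gamma_{9,3}$ and has size $\pi(n)+\pi(n/2)$. Fix a constant $C>1$, set $m:=\pi(\sqrt n)-\pi(\sqrt n/C)\asymp \sqrt n/\log n$, and split the primes in $(\sqrt n/C,\sqrt n)$ into two consecutive blocks $P$ and $Q$, each of size $\asymp m$, so that $p<q$ and $pq\le n$ whenever $p\in P$, $q\in Q$. Using the known lower bound $z(m,m;3,3)=\Theta(m^{5/3})$ for the Zarankiewicz problem, pick a \emph{bipartite} graph $G$ on $P\cup Q$ with no $K_{3,3}$ and with $\gg m^{5/3}\asymp n^{5/6}/(\log n)^{5/3}$ edges, and put
\[
A:=\bigl(A_0\setminus\{\,p,\ 2p\ :\ p\in P\cup Q\,\}\bigr)\ \cup\ \{\,pq\ :\ pq\in E(G)\,\}.
\]
The deletions cost only $O(\sqrt n/\log n)=o(n^{5/6}/(\log n)^{5/3})$, so it suffices to verify $A\in\Gamma_{9,3}$. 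The key point is that a prime factor of an element of $A$ is either a ``window prime'' in $P\cup Q$, occurring now only inside the semiprimes, or a prime outside the window, occurring only as a singleton or inside a $2p$, hence with multiplicity at most $2$ in any product of distinct elements of $A$ — never a positive multiple of $3$. Hence if $9$ distinct elements of $A$ have product $x^3$ then the chosen semiprimes form a subgraph $H\subseteq G$ with all degrees $\equiv 0\pmod 3$, and the chosen non‑semiprimes must \emph{by themselves} have cubic product; a short check shows the latter is impossible, so all nine elements are semiprimes. But a $9$‑edge graph with all degrees $\equiv 0\pmod 3$ must be $3$‑regular on $6$ vertices, i.e.\ $K_{3,3}$ or the triangular prism — the first excluded since $G$ is $K_{3,3}$‑free, the second since $G$ is bipartite (the prism has triangles). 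This gives $A\in\Gamma_{9,3}$ and hence $F_{9,3}(n)-(\pi(n)+\pi(n/2))\gg n^{5/6}/(\log n)^{5/3}$.

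\textbf{Upper bound.}
Let $A\in\Gamma_{9,3}$ with $|A|=F_{9,3}(n)$, and write $A=A_1\cup A_2\cup A_3$, where $A_1$ are the primes in $A$, $A_2$ the numbers $2p$ in $A$, and $A_3$ the rest; then $|A_1|\le\pi(n)$, $|A_2|\le\pi(n/2)$, and it remains to show $|A_3|\ll n^{5/6}$. Split $A_3$ by the shape of the factorisation. The \emph{balanced semiprimes} $pq\in A_3$ with $p,q$ both in $\bigl(n^{1/2}(\log n)^{-O(1)},\,n^{1/2}(\log n)^{O(1)}\bigr)$ form a graph on $\ll n^{1/2}$ vertices which, by the $9$‑term relation used above, contains neither $K_{3,3}$ nor the triangular prism; by Kővári–Sós–Turán it has $\ll n^{5/6}$ edges. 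For each remaining $b\in A_3$ one singles out a prime power $\pi^{a}\,\|\,b$ with $\pi$ and $b/\pi^{a}$ of suitably comparable sizes, and argues that if $A$ contained more than $O(n^{5/6})$ such $b$ with a fixed $\pi$‑adic shape, then three of the associated local configurations, padded out with singleton primes and $2p$'s already lying in $A$, would combine into $9$ \emph{distinct} elements with cubic product — contradiction; summing the per‑prime bounds over all primes $\pi$ yields $|A_3|\ll n^{5/6}$.

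\textbf{Main obstacle.}
The lower bound is routine once the $K_{3,3}$‑free/Zarankiewicz connection is noticed. The substance is in the upper bound's last step: one must handle \emph{every} remaining factorisation shape — unbalanced products $\ell p$ with $\ell$ composite and $p$ large, genuine prime powers $p^{a}$, products $p^{2}q$, $pqr$, and so on — and make all the contributions sum to exactly $O(n^{5/6})$. The delicacy is that most of these configurations produce only short ($3$‑ or $4$‑term) cube relations, harmless in isolation; one has to glue three of them together, supplying $2p$'s as padding, to reach a forbidden $9$‑term relation, while keeping the nine elements distinct and the exponents$\pmod 3$ under control. Landing on the exponent $5/6$ rather than something larger is precisely the assertion that the balanced‑semiprime $K_{3,3}$‑free construction is optimal, so the upper bound argument must in effect certify its extremality against all competing configurations.
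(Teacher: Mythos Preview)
Your lower bound is correct and is essentially the paper's argument: both use a $K_{3,3}$-free bipartite graph on the small primes (via Brown's construction) and observe that nine distinct semiprimes with cubic product would force a $K_{3,3}$. The paper works with all primes $p\le\sqrt n$ rather than a window $(\sqrt n/C,\sqrt n)$ and, to rule out the nine-semiprime case, counts $\Omega(x^3)$ instead of classifying $3$-regular six-vertex graphs; these are cosmetic differences only.

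Your upper bound, however, has a real gap. The paper does not argue from scratch: it simply applies Verstra\"ete's general bound (Lemma~\ref{Verstraete} with $k=9$, $d=3$), which already gives
\[
F_{9,3}(n)-\Bigl(\pi(n)+\pi\!\left(\tfrac n2\right)\Bigr)\ll n^{5/6}.
\]
Your plan instead writes $A=A_1\cup A_2\cup A_3$ (primes, numbers $2p$, and the rest) and asserts that it ``remains to show $|A_3|\ll n^{5/6}$''. This assertion is false. Take
\[
A=\{\,3p:\ p\ \text{prime},\ n/6<p\le n/3\,\}.
\]
Every element is of the form $3p$ with $p$ an odd prime, so $A_1=A_2=\varnothing$ and $A_3=A$. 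The product of any nine distinct elements is $3^{9}p_1\cdots p_9$ with the $p_i$ pairwise distinct primes, which is never a cube; hence $A\in\Gamma_{9,3}$. Yet $|A_3|=|A|\asymp n/\log n\gg n^{5/6}$. Thus one cannot bound $|A_3|$ in isolation: the ``padding with singleton primes and $2p$'s already lying in $A$'' step presupposes that $A$ contains many primes and $2p$'s, which need not be the case. What is actually needed is a trade-off argument showing that whenever $A_3$ is large the sets $A_1,A_2$ are correspondingly depleted --- and carrying this out carefully is precisely the content of Verstra\"ete's lemma, which the paper cites directly.
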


\begin{theorem}\label{thm-k=1233-F}
We have the following bounds:
\begin{enumerate}
\item $\frac{n^{3/4}}{(\log n)^{3/2}}\ll F_{12,3}(n)-\left(  \pi (n)+\pi(\frac{n}{2})+\pi (\frac{n}{3}) \right)\ll n^{5/6}$,
\item $\frac{n^{3/4}}{(\log n)^{3/2}}\ll F_{15,3}(n)-\left(  \pi (n)+\pi(\frac{n}{2})+\pi (\frac{n}{3})+\pi (\frac{n}{5}) \right)\ll n^{5/6}$,
\item $\frac{n^{2/3}}{(\log n)^{4/3}}\ll F_{18,3}(n)-\left(  \pi (n)+\pi(\frac{n}{2}) \right)\ll n^{5/6}$,
\item $\frac{n^{2/3}}{(\log n)^{4/3}}\ll F_{21,3}(n)-\left(  \pi (n)+\pi(\frac{n}{2})+\pi (\frac{n}{3}) \right)\ll n^{5/6}$,
\item $\frac{n^{3/5}}{(\log n)^{6/5}}\ll F_{24,3}(n)-\left(  \pi (n)+\pi(\frac{n}{2})+\pi (\frac{n}{3}) \right)\ll n^{5/6}$,
\item $\frac{n^{3/5}}{(\log n)^{6/5}}\ll F_{27,3}(n)-\left(  \pi (n)+\pi(\frac{n}{2}) \right)\ll n^{5/6}$,
\item $\frac{n^{3/5}}{(\log n)^{6/5}}\ll F_{30,3}(n)-\left(  \pi (n)+\pi(\frac{n}{2})+\pi (\frac{n}{3}) \right)\ll n^{5/6}$,
\item $\frac{n^{3/5}}{(\log n)^{6/5}}\ll F_{33,3}(n)-\left(  \pi (n)+\pi(\frac{n}{2})+\pi (\frac{n}{3}) \right)\ll n^{5/6}$.
\end{enumerate}
\end{theorem}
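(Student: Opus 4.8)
The plan is to treat all eight estimates by one uniform scheme, running parallel to the arguments for $F_{4,2}$, $F_{6,2}$ and $F_{8,2}$ recalled above; throughout, fix $k$ with $3\mid k$ and put $m=k/3$. Call an integer \emph{primitive of shape $s$} if it equals $qs$ for some prime $q>n^{1/100}$, and note that a product of primitive integers $q_1s_1,\dots,q_ts_t$ is a perfect cube precisely when, after grouping the factors according to the prime $q_i$, every group has size divisible by $3$ and the product over all groups of all the shapes occurring is a cube. Encoding each shape by the vector of its prime exponents modulo $3$ turns the property ``$\{qs:\ q\text{ a large prime},\ s\in\mathcal{S}\}\in\Gamma_{k,3}$'' into a linear feasibility question over $\mathbb{F}_3$: it holds iff one cannot choose, for finitely many large primes $q$, submultisets $S_q\seq\mathcal{S}$ with $|S_q|\equiv 0\pmod 3$, $\sum_q|S_q|=k$ and $\prod_q\prod_{s\in S_q}s$ a cube. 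A short case analysis of this problem shows that the admissible $\mathcal{S}$ maximising $\sum_{s\in\mathcal{S}}\pi(n/s)$ is $\{1,2\}$ when $9\mid k$ (the cases $k=18$ and $k=27$), is $\{1,2,3,5\}$ when $k=15$, and is $\{1,2,3\}$ for $k\in\{12,21,24,30,33\}$; this yields exactly the main terms in the statement, and the corresponding primitive family belongs to $\Gamma_{k,3}$.

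For the lower bounds on the correction term I would enlarge the primitive family by a set $\mathcal{C}$ of integers of the form $p_1\cdots p_r$, with the $p_i$ primes drawn from disjoint short intervals whose lengths multiply to $\asymp n$ (so $p_1\cdots p_r\le n$), mirroring the constructions for $F_{2j,2}$ --- taking $r=3$ when $m\in\{4,5\}$ and a larger $r$ for larger $m$ --- while keeping all primes used to build $\mathcal{C}$ outside the prime set of the primitive part (this costs only $o(n^{3/4}/(\log n)^{3/2})$ primes). Then $\mathcal{C}$ shares no prime with the primitive family, so a cube of length at most $k$ would split as an independent primitive cube and a $\mathcal{C}$-cube; the latter corresponds to a sub-hypergraph of the prime-incidence hypergraph of $\mathcal{C}$ with at most $k$ edges and all degrees divisible by $3$ (in particular minimum degree $\ge 2$ on its support), which cannot exist once $\mathcal{C}$ is chosen of girth exceeding $k$. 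Optimising the interval lengths exactly as in the analyses of $F_{4,2}$, $F_{6,2}$ and $F_{8,2}$ then gives $|\mathcal{C}|\gg n^{3/4}/(\log n)^{3/2}$ for $k\in\{12,15\}$, $|\mathcal{C}|\gg n^{2/3}/(\log n)^{4/3}$ for $k\in\{18,21\}$, and $|\mathcal{C}|\gg n^{3/5}/(\log n)^{6/5}$ for $k\in\{24,27,30,33\}$, which are the asserted lower bounds.

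For the common upper bound $\ll n^{5/6}$, take $A\in\Gamma_{k,3}$ with $A\seq[n]$ and write each $m\in A$ as $m=uv$, where $v$ is the product of those prime factors of $m$ exceeding $n^{1/6}$ (so $v$ has at most five prime factors) and $u$ is the $n^{1/6}$-smooth part. The heart of the matter is to show that, apart from $O(n^{5/6})$ exceptions, every element of $A$ is primitive, i.e.\ has $v$ prime. Consider the graph whose vertices are primes and whose edges are the elements $qq'$ of $A$: a cycle of length $\ell$ in it, together with the $\ell$ primes at its vertices --- and, where those primes are missing from $A$, a short bookkeeping argument comparing $|A|$ with the target size --- produces a cube of length $O(\ell)$ in $A$, so this graph has girth $>k/2$; since one side of it consists of primes in $(n^{1/6},\sqrt n\,]$ and the other of primes up to $n^{5/6}$, a K\H{o}v\'ari--S\'os--Tur\'an-type bound forces it to have $O(n^{5/6})$ edges, and the analogous hypergraph statement handles the elements with three, four or five large prime factors. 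The surviving elements of $A$ then form a primitive family, and a weighted refinement of the $\mathbb{F}_3$-analysis of the first paragraph --- balancing, for each large prime, how many exotic shapes it may carry against the length budget $k$ --- bounds their number by the stated main term plus $O(n^{5/6})$. Combining the two estimates yields the upper bound.

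The step I expect to be the real obstacle, in both directions, is the interaction analysis: on the construction side, pinning down exactly how a cube of length $\le k$ can split between the primitive family and $\mathcal{C}$ (and what the primitive cubes of length $<k$ are, since these may combine with a $\mathcal{C}$-piece), so that the correct girth threshold --- and hence the exponent $3/4$, $2/3$ or $3/5$ --- emerges for $\mathcal{C}$; and on the upper-bound side, organising the smooth/rough splitting and the ensuing extremal-graph counting carefully enough that the total loss is genuinely $O(n^{5/6})$ and not a larger power of $n$. For $m\ge 8$ all cases reduce to the $F_{8,2}$-type construction, and one expects the exponent $3/5$ to be improvable, but I would not pursue that here.
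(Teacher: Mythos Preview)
Your high-level plan is on the right track, but two of the three steps diverge from what the paper does in ways that matter.

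For the upper bounds, the paper does no smooth/rough decomposition or extremal counting at all: it simply quotes Verstra\"ete's bound (Lemma~\ref{Verstraete}), which for $d=3$, $3\mid k$, $k\ge 9$ gives $F_{k,3}(N)\le\sum_{j<\|k/3\|}\pi(N/j)+O(N^{5/6})$ directly, with the correct main term in every case except $k=15$, where a two-line refinement of Verstra\"ete's argument replaces $\pi(n/4)$ by $\pi(n/5)$. The ``real obstacle'' you anticipate on the upper-bound side therefore never arises.

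For the lower bound on $\mathcal{C}$ there is a genuine gap. The target exponents $3/4$, $2/3$, $3/5$ are precisely the exponents in the \emph{graph} Tur\'an numbers $\mathrm{ex}(t;C_3,\dots,C_\ell)$ at $\ell=4,6,10$ (Lemma~\ref{exgraph}), evaluated at $t\asymp\pi(\sqrt n)$. Accordingly the paper takes $\mathcal{C}$ to consist of products $p_ip_j$ of \emph{two} primes $\le\sqrt n$, the pairs forming the edges of a $\{C_3,\dots,C_\ell\}$-free graph; Lemma~\ref{Gammak=6-l1} then excludes any cube among $\le k$ such products once $\ell$ is large enough. Your proposal to use $r\ge 3$ prime factors from disjoint intervals lands you in an $r$-uniform hypergraph girth problem whose extremal numbers are \emph{not} $t^{3/2},t^{4/3},t^{6/5}$; with $r=3$ for $k=12$ you will not recover $n^{3/4}$ (indeed already six products of three primes can form a cube, via $K_{2,2,2}^{(3)}$ minus a perfect matching). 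Incidentally, ``minimum degree $\ge 2$'' should read $\ge 3$. The interaction between $B$ and $\mathcal{C}$ is also easier than you fear: the paper builds $B$ from primes $>\sqrt n$ (not $n^{1/100}$), so $B$ and $\mathcal{C}$ are coprime and any putative cube splits as a $B$-cube times a $\mathcal{C}$-cube; a short case check on $s=|\{a_i\}\cap\mathcal{C}|\in\{0,3,6,\dots,k\}$ disposes of each possibility, and when $\mathcal{S}=\{1,2\}$ the $B$-part is automatically empty since each large prime occurs at most twice in $B$.
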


\begin{theorem}\label{thm-k=3l-f}
For every $\ell\ge 2$, we have
$$
\frac{n^{\frac{3\ell}{6\ell-2}}}{(\log n)^{\frac{3\ell}{3\ell-1}}}\ll f_{6\ell,3}(n)-\pi (n)\ll n^{2/3}\log n
$$
and
$$
\frac{n^{\frac{3\ell+1}{6\ell}}}{(\log n)^{\frac{3\ell+1}{3\ell}}}\ll f_{6\ell+3,3}(n)-\pi (n)\ll n^{2/3}\log n.
$$
\end{theorem}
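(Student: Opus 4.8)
The plan splits into the (easier) upper bounds and the (harder) lower bounds.

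For the \textbf{upper bounds}: first prove the monotonicity $f_{k+d,d}(n)\le f_{k,d}(n)$ for all large $n$. If $A\seq[n]$ is an extremal set in $\gamma_{k+d,d}$ but $a_1\cdots a_k=x^d$ is a non-trivial solution with $a_i\in A$, choose $c\in A$ not among $a_1,\dots,a_k$ (possible since $|A|\ge\pi(n)>k$) and adjoin $d$ copies of $c$: the $(k+d)$-term product is still a $d$-th power, and it is still non-trivial, since removing a block of $d$ equal elements can only remove $\{c,\dots,c\}$, leaving the non-trivial multiset $\{a_1,\dots,a_k\}$ — a contradiction. Taking $d=3$ and iterating gives $f_{6\ell,3}(n)\le f_{9,3}(n)$ and $f_{6\ell+3,3}(n)\le f_{9,3}(n)$ for every $\ell\ge 2$, so both upper bounds follow from Theorem~\ref{thm-k=9-f}, namely $f_{9,3}(n)-\pi(n)\ll n^{2/3}\log n$.

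For the \textbf{lower bounds} I would extend the $k=6,9$ constructions. Put $A=\{p\text{ prime}:\sqrt n<p\le n\}\cup B$, where $B$ consists of squarefree semiprimes $p_ip_j$ with $p_i,p_j\le\sqrt n$ prime (so each member of $B$ is $\le n$), chosen so that the bipartite graph $G$ they span on the vertex set of primes $\le\sqrt n$ has girth $>4\ell$ when $k=6\ell$ (resp.\ $>4\ell+2$ when $k=6\ell+3$); the primes $\le\sqrt n$ themselves are dropped from $A$. To see $A\in\gamma_{k,3}$: in a product $a_1\cdots a_k=x^3$ the large primes ($>\sqrt n$) and the primes dividing members of $B$ ($\le\sqrt n$) have disjoint supports, so the large‑prime part and the semiprime part are each cubes; a cube built from distinct large primes is a union of equal $3$-tuples, i.e.\ a trivial sub‑configuration. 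Thus a non-trivial solution would force the semiprime part to be a non-trivial cube, i.e.\ a sub‑multigraph $H$ of $G$ (edges taken with multiplicities) in which every vertex degree is divisible by $3$ but not every edge‑multiplicity is. The edges of $H$ of multiplicity $\not\equiv 0\pmod 3$ span a subgraph of minimum degree $\ge 2$ (a vertex meeting exactly one of them would have degree $\not\equiv 0$), hence contain a cycle, which is even since $G$ is bipartite; a short count of multiplicities shows that such a cycle of length $2r$ absorbs at least $3r$ of the $\le k$ edges of $H$, so $3r\le k$ and $2r\le 4\ell$ (resp.\ $\le 4\ell+2$), contradicting the girth of $G$. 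Hence all solutions are trivial and $|A|-\pi(n)=|E(G)|-\pi(\sqrt n)$.

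It remains to choose $G$ with enough edges. With $s:=\pi(\sqrt n)\asymp\sqrt n/\log n$ vertices one wants $|E(G)|\gg s^{1+1/(3\ell-1)}$ in the first case and $\gg s^{1+1/(3\ell)}$ in the second; substituting $s\asymp\sqrt n/\log n$ produces exactly the exponents $\frac{3\ell}{6\ell-2},\frac{3\ell}{3\ell-1}$ and $\frac{3\ell+1}{6\ell},\frac{3\ell+1}{3\ell}$ in the statement. This input is supplied by known dense graphs of large girth — incidence graphs of generalised polygons where one of the appropriate girth exists (these reproduce the $k=6$ and $k=9$ bounds), and explicit high‑girth families otherwise. \textbf{The main obstacle is this interplay of the ``$\bmod\,3$'' combinatorics with the extremal‑graph input.} Unlike the $d=2$ case — where ``product is a square'' is exactly ``edge set lies in the $\mathbb{F}_2$‑cycle space'', and even cycles are the sole obstruction — here odd cycles never yield a non‑trivial configuration whereas even cycles do, so one must verify carefully that the minimal obstructions really are short even cycles (and use that $G$ is bipartite to rule out, e.g., two triangles sharing a vertex), in order that the girth threshold imposed is no larger than what a sufficiently dense graph can realise, and hence yields the claimed power of $n$.
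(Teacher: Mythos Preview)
Your proof is correct and matches the paper's approach: monotonicity $f_{3m+3,3}\le f_{3m,3}$ together with Theorem~\ref{thm-k=9-f} for the upper bounds (the paper uses $a_1^4a_2\cdots a_{3m}=(xa_1)^3$ rather than adjoining a fresh $c^3$, but this is cosmetic), and the same large-primes-plus-semiprimes construction governed by a high-girth graph on the primes $\le\sqrt n$ for the lower bounds. The only difference is in the verification that $A\in\gamma_{k,3}$: you give a direct mod-$3$ multigraph/degree argument, whereas the paper proceeds by induction (Lemma~\ref{k=6-l1}); both yield the identical girth threshold $2k/3$ and then invoke the same extremal-graph input (Lemma~\ref{exgraph}(4),(5)).
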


\begin{theorem}\label{thm-k=36-F}
For every $\ell\ge 1$, we have the following bounds:
\begin{enumerate}
\item $\frac{n^{\frac{9\ell}{18\ell-2}}}{(\log n)^{\frac{9\ell}{9\ell-1}}}\ll F_{36\ell,3}(n)-\left( \pi(n)+\pi(\frac{n}{2})\right)\ll n^{5/6}$,
\item $\frac{n^{\frac{9\ell}{18\ell-2}}}{(\log n)^{\frac{9\ell}{9\ell-1}}}\ll F_{36\ell+3,3}(n)-\left( \pi(n)+\pi(\frac{n}{2})+\pi (\frac{n}{3})\right)\ll n^{5/6}$,
\item $\frac{n^{\frac{9\ell+1}{18\ell}}}{(\log n)^{\frac{9\ell+1}{9\ell}}}\ll F_{36\ell+6,3}(n)-\left( \pi(n)+\pi(\frac{n}{2})+\pi (\frac{n}{3})\right)\ll n^{5/6}$,
\item $\frac{n^{\frac{9\ell+1}{18\ell}}}{(\log n)^{\frac{9\ell+1}{9\ell}}}\ll F_{36\ell+9,3}(n)-\left( \pi(n)+\pi(\frac{n}{2})\right)\ll n^{5/6}$,
\item $\frac{n^{\frac{9\ell+3}{18\ell+4}}}{(\log n)^{\frac{9\ell+3}{9\ell+2}}}\ll F_{36\ell+12,3}(n)-\left( \pi(n)+\pi(\frac{n}{2})+\pi (\frac{n}{3})\right)\ll n^{5/6}$,
\item $\frac{n^{\frac{9\ell+3}{18\ell+4}}}{(\log n)^{\frac{9\ell+3}{9\ell+2}}}\ll F_{36\ell+15,3}(n)-\left( \pi(n)+\pi(\frac{n}{2})+\pi (\frac{n}{3})\right)\ll n^{5/6}$,
\item $\frac{n^{\frac{9\ell+4}{18\ell+6}}}{(\log n)^{\frac{9\ell+4}{9\ell+3}}}\ll F_{36\ell+18,3}(n)-\left( \pi(n)+\pi(\frac{n}{2})\right)\ll n^{5/6}$,
\item $\frac{n^{\frac{9\ell+4}{18\ell+6}}}{(\log n)^{\frac{9\ell+4}{9\ell+3}}}\ll F_{36\ell+21,3}(n)-\left( \pi(n)+\pi(\frac{n}{2})+\pi (\frac{n}{3})\right)\ll n^{5/6}$,
\item $\frac{n^{\frac{9\ell+6}{18\ell+10}}}{(\log n)^{\frac{9\ell+6}{9\ell+5}}}\ll F_{36\ell+24,3}(n)-\left( \pi(n)+\pi(\frac{n}{2})+\pi (\frac{n}{3})\right)\ll n^{5/6}$,
\item $\frac{n^{\frac{9\ell+6}{18\ell+10}}}{(\log n)^{\frac{9\ell+6}{9\ell+5}}}\ll F_{36\ell+27,3}(n)-\left( \pi(n)+\pi(\frac{n}{2})\right)\ll n^{5/6}$,
\item $\frac{n^{\frac{9\ell+7}{18\ell+12}}}{(\log n)^{\frac{9\ell+7}{9\ell+6}}}\ll F_{36\ell+30,3}(n)-\left( \pi(n)+\pi(\frac{n}{2})+\pi (\frac{n}{3})\right)\ll n^{5/6}$,
\item $\frac{n^{\frac{9\ell+7}{18\ell+12}}}{(\log n)^{\frac{9\ell+7}{9\ell+6}}}\ll F_{36\ell+33,3}(n)-\left( \pi(n)+\pi(\frac{n}{2})+\pi (\frac{n}{3})\right)\ll n^{5/6}$.
\end{enumerate}

%$$
%\frac{n^{\frac{9\ell}{18\ell-2}}}{(\log n)^{\frac{9\ell}{9\ell-1}}}\ll F_{18\ell,3}(n)-\left(\pi (n)+\pi \left(\frac{n}{2}\right)\right)\ll n^{5/6}
%$$
%and
%$$
%\frac{n^{\frac{9\ell+4}{18\ell+6}}}{(\log n)^{\frac{9\ell+4}{9\ell+3}}}\ll F_{18\ell+9,3}(n)-\left(\pi (n)+\pi \left(\frac{n}{2}\right)\right)\ll n^{5/6}.
%$$
\end{theorem}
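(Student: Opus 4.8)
\medskip

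\noindent\emph{Proof sketch.}
The twelve estimates are handled by one scheme; write $k=36\ell+r$ with $r\in\{0,3,\dots,33\}$, and for $a\le n$ let $s(a)$ be the cube-free part of $a$. Only two ingredients depend on $r$: the ``trivial'' large set $L=L_k$ realising the main term, and (for the lower bound) a uniformity $h=h(r)$ together with a dyadic window $M=n^{\theta(r)}$. Put $L=\{p\le n:\ p\ \text{prime}\}\cup\{2p\le n\}$ when $9\mid k$, and $L=\{p\le n\}\cup\{2p\le n\}\cup\{3p\le n\}$ when $9\nmid k$; these have size exactly $\pi(n)+\pi(n/2)$, respectively $\pi(n)+\pi(n/2)+\pi(n/3)$. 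The elementary fact behind the dichotomy $9\mid k$ versus $9\nmid k$ (already exploited for $F_{9,3}$ and $F_{12,3},\dots,F_{33,3}$ in Theorems~\ref{thm-k=9-F} and \ref{thm-k=1233-F}) is that for a prime $q\notin\{2,3\}$ the exponent of $q$ in a product of distinct elements of $L$ lies in $\{0,1,2\}$ unless all of $q,2q,3q$ occur, so every cube-product formed inside $L$ is a union of complete triples $\{q_i,2q_i,3q_i\}$ and hence equals $(6\prod q_i)^3$ only when the number of triples is divisible by $3$, i.e.\ only when the solution uses a multiple of $9$ elements; thus $L_k$ has property $P_{k,3}$ exactly in the regime above and is maximal of its kind.

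\noindent\emph{Lower bounds.} Beyond $|L|$ one adjoins an auxiliary set $B$ built as in Pach--Vizer~\cite{PachVizer}, transported from squares to cubes (the method originating in \cite{ESS}). Let $\mathcal P$ be the set of products of $h$ distinct primes from $[M,2M]$ that are $\le n$, so $|\mathcal P|\asymp n^{h\theta}/(\log n)^{h}$ (assuming $h\theta\le 1$). Viewing a member of $\mathcal P$ as a hyperedge on primes, a cube-configuration of $L\cup B$ that uses $j$ members of $B$ becomes, after correcting every prime of nonzero residue with at most two elements $q,2q$ of $L$ and repairing the exponents of $2$ and $3$ with at most one of $2,3,4,6,9$, a ``$\bmod 3$ cover'' of bounded excess in the hypergraph — the key structural point being that $L$ supplies \emph{at most two} elements per correcting prime. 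One then selects $B\seq\mathcal P$ by the deletion method, removing one hyperedge from every such cover whose total size would be exactly $k$; a case analysis of how a putative $k$-element solution distributes among $B$-hyperedges, their corrections and free $L$-triples shows that the surviving $B$ still has $\gg n^{\alpha_r}/(\log n)^{\beta_r}$ elements, and optimising $(h,\theta)$ reproduces the twelve exponent pairs (these are the cube analogues of the even-$k$ exponents of \cite{PachVizer}, e.g.\ $\bigl(\tfrac{9\ell}{18\ell-2},\tfrac{9\ell}{9\ell-1}\bigr)$ for $k=36\ell$ matches the case $4\ell'=36\ell$ there).

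\noindent\emph{Upper bounds.} The estimate $F_{36\ell+r,3}(n)\le \mathrm{main}_r+O_k(n^{5/6})$ is obtained exactly as for $k=9,12,18$. Given $A\in\Gamma_{k,3}$, $A\seq[n]$, split $A$ according to $s(a)$. Cubes contribute $\le n^{1/3}$. Elements whose cube-free part is a bounded prime power times at most one prime exceeding $n^{1/6}$ are accounted for by $\mathrm{main}_r+O(n^{1/6+o(1)})$: one discards only $O_k(1)$ exceptional primes, using that if a prime occurs through its ``large'' slot in too many members of $A$ then, adjoining the relevant primes $q_i$ and $2q_i$ (and a power of $2$), one assembles a cube-product of exactly $k$ elements. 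Every remaining element has $s(a)$ divisible by at least two primes above $n^{1/6}$, hence is a product of at most six such primes, all lying in $(n^{1/6},n^{5/6})$; the associated linear hypergraph on $\le\pi(n^{5/6})\asymp n^{5/6}/\log n$ vertices can carry only $O_k(n^{5/6})$ edges, since a larger edge set would contain a bounded number of vertex-disjoint small gadgets whose sizes — coprime values such as $6$ and $9$ — add up to exactly $k=36\ell+r$ (a Kővári--Sós--Turán-type count pins the threshold at $n^{5/6}$). Since the gadgets have bounded size, the $O(n^{5/6})$ is uniform in $\ell$ up to the implied constant.

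\noindent\emph{Main obstacle.} The delicate step is the lower-bound bookkeeping: proving that the thinned $B$ remains large while \emph{every} $k$-element cube-product of $L\cup B$ is destroyed. Such a product may freely mix $B$-hyperedges, their $L$-corrections, free $L$-triples $\{q,2q,3q\}$ (available only when $9\nmid k$), and the sporadic elements $2,3,4,6,9$, and the size bookkeeping must exclude total size exactly $k$ in all these combinations; arranging $h$ and $\theta$ so that this succeeds while simultaneously matching the target exponents in each of the twelve residue classes is the analogue of the core difficulty in \cite{PachVizer}, and rests on the mod-$3$ feature that each correcting prime costs at most two elements of $L$.
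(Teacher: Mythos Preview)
Your sketch diverges from the paper's argument on both halves, and on the lower bound it has a genuine gap.

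\textbf{Upper bounds.} In the paper these are one-line citations of Lemma~\ref{Verstraete} (Verstra\"ete): for $d=3$ and any $k\ge 9$ with $3\mid k$ one has $F_{k,3}(n)\le \sum_{j<\|k/3\|}\pi(n/j)+O(n^{5/6})$, and $\|k/3\|\in\{2,3\}$ according as $9\mid k$ or not. Your hands-on outline (split by cube-free part, then a K\H{o}v\'ari--S\'os--Tur\'an count on a hypergraph of primes in $(n^{1/6},n^{5/6})$) is essentially a re-derivation of that lemma; it is plausible but not needed, and several steps (``discards only $O_k(1)$ exceptional primes'', ``gadgets whose sizes add up to exactly $k$'') are left unsupported.

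\textbf{Lower bounds.} Here your approach is not the paper's. The paper does \emph{not} use a Pach--Vizer deletion scheme with products of $h$ primes in a dyadic window. Instead, it takes
\[
B=\{p:\sqrt n<p\le n\}\cup\{2p:\sqrt n<p\le n/2\}\quad(\text{and }\cup\{3p:\sqrt n<p\le n/3\}\text{ when }9\nmid k),
\]
and adjoins a set $C$ of products $p_ip_j$ of \emph{two} primes $\le\sqrt n$, where the pairs $\{p_i,p_j\}$ are the edges of a $\{C_3,\dots,C_{m}\}$-free graph on $t=\pi(\sqrt n)$ vertices with $m$ chosen case by case (e.g.\ $m=12\ell$ for $k=36\ell$). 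The exponents in the statement drop out immediately from Lemma~\ref{exgraph}: for instance with $m=12\ell$ one gets $|C|\gg t^{9\ell/(9\ell-1)}=n^{9\ell/(18\ell-2)}/(\log n)^{9\ell/(9\ell-1)}$. That $A=B\cup C\in\Gamma_{k,3}$ is then proved by a short case analysis on $s=|\{a_1,\dots,a_k\}\cap C|$: large primes $q>\sqrt n$ appear in at most two (or three) elements of $B$, so the $B$-part factors separately and is handled by the $6^m$-not-a-cube obstruction you mention, while for $s\ge 12$ Lemma~\ref{Gammak=6-l1} rules out a cube among the $C$-elements via the girth hypothesis.

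The gap in your sketch is that you never verify the exponents your method would produce. Your parenthetical ``$\bigl(\tfrac{9\ell}{18\ell-2},\tfrac{9\ell}{9\ell-1}\bigr)$ for $k=36\ell$ matches the case $4\ell'=36\ell$'' of \cite{PachVizer} is numerically false: their bound for $F_{4\ell',2}$ has exponent $3\ell'/(6\ell'-2)$, which with $\ell'=9\ell$ gives $27\ell/(54\ell-2)$, not $9\ell/(18\ell-2)=27\ell/(54\ell-6)$. Without an actual choice of $(h,\theta)$ and a count of the configurations to delete, there is no evidence the deletion route reaches the stated bounds; and even if it could, the high-girth graph construction via Lemmas~\ref{exgraph} and~\ref{Gammak=6-l1} is both simpler and already sufficient. (A smaller point: your $L$ should consist of $p,2p,(3p)$ with $p>\sqrt n$, not all $p\le n$; otherwise $L$ and your auxiliary set share small primes and the coprimality that drives the case split is lost.)
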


%\begin{theorem}\label{thm-k=9l+3-F}
%For every $\ell\ge 1$, we have
%$$
%\frac{n^{\frac{9\ell+1}{18\ell}}}{(\log n)^{\frac{9\ell+1}{9\ell}}}\ll F_{18\ell+3,3}(n)-\left(\pi (n)+\pi \left(\frac{n}{2}\right)+\pi \left(\frac{n}{3}\right)\right)\ll n^{5/6}.
%$$
%and
%$$
%\frac{n^{\frac{9\ell-3}{18\ell-8}}}{(\log n)^{\frac{9\ell-3}{9\ell-4}}}\ll F_{18\ell-6,3}(n)-\left(\pi (n)+\pi \left(\frac{n}{2}\right)+\pi \left(\frac{n}{3}\right)\right)\ll n^{5/6}.
%$$
%\end{theorem}

%\begin{theorem}\label{thm-k=15-F}
%For $F_{15,3}(n)$ we have the following bounds:
%$$
%\frac{n^{3/5}}{(\log n)^{6/5}}\ll F_{15,3}(n)-\left(\pi (n)+\pi \left(\frac{n}{2}\right)+\pi \left(\frac{n}{3}\right)+\pi \left(\frac{n}{5}\right)\right)\ll n^{5/6}.
%$$
%\end{theorem}

%\begin{theorem}\label{thm-k=9l+6-F}
%For every $\ell\ge 1$, we have
%$$
%\frac{n^{\frac{9\ell+3}{18\ell+4}}}{(\log n)^{\frac{9\ell+3}{9\ell+2}}}\ll F_{18\ell+6,3}(n)-\left(\pi (n)+\pi \left(\frac{n}{2}\right)+\pi \left(\frac{n}{3}\right)\right)\ll n^{5/6}.
%$$
%and
%$$
%\frac{n^{\frac{9\ell+7}{18\ell+12}}}{(\log n)^{\frac{9\ell+7}{9\ell+6}}}\ll F_{18\ell+15,3}(n)-\left(\pi (n)+\pi \left(\frac{n}{2}\right)+\pi \left(\frac{n}{3}\right)\right)\ll n^{5/6}.
%$$
%\end{theorem}

%%%% Notations
\textbf{Notations.} Throughout this paper, we denote by $[n]$ the set $\{1,2, \ldots, n\}$. The standard notation $\ll$, $\gg$
and $O$ is applied to positive quantities in the usual way.
That is, $X \gg Y$, $ Y \ll X$, and $Y = O(X)$ all mean that $X \geq cY$,
for some absolute constant $c > 0$. %If both $X \ll Y$ and $Y \ll X $ hold, we write $X = \Theta(Y )$.
If the constant $c$ depends on a quantity
$t$, we write $X \ll_t Y$, $Y = O_t(Y )$. Analogously to squarefree numbers, we call an integer $a$ cubefree if there is no integer $b>1$ such that $b^3 \mid a$. The cubefree part of an integer $a$ is $a/b^3$, where $b^3$ is the largest perfect cube dividing $a$. In this paper by the interval $\left[a,b\right]$ we mean only the integers in $\left[a,b\right]$. We denote by $\Omega(m)$ the total number of prime factors of $m$ with multiplicity and $\pi_{k}(n)$ denotes the number of positive integers up to $n$ which have exactly $k$ prime factors (with multiplicity).

% pi_2 def

%% smooth, rough, o, O

\section{Combinatorial and arithmetic lemmas}

Let $G_1,G_2,\dots ,G_r$ be arbitrary graphs. The Tur\'{a}n number $\mathrm{ex}(n;G_1,\dots ,G_r)$ is the maximum number of edges in a graph on $n$ vertices not containing any copy of $G_1$, $G_2$, ... or $G_r$. Let $C_\ell$ denote the cycle of length $\ell$. A complete bipartite graph with partite sets of size $u$ and $v$ is denoted $K_{u,v}$.

For the proof of Theorem~\ref{thm-k=9-F} we need the following graph-theoretic lemma.

\begin{lemma}\label{k33}
    For every sufficiently large $n$, there exists a $K_{3,3}$-free bipartite graph $G=(S,T,E)$ with $|S|=|T|=n$ such that $|E|>cn^{5/3}$ for some $c > 0$.
\end{lemma}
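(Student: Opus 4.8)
The plan is to establish Brown's classical bound $\mathrm{ex}(n;K_{3,3})\gg n^{5/3}$ and recast it in balanced bipartite form. Concretely there are three steps: produce a dense $K_{3,3}$-free graph from an algebraic construction, pass to its bipartite double cover, and then pad with isolated vertices so that the two parts have size exactly $n$.

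For the first step I would use Brown's ``sphere graph''. Let $q$ be an odd prime and $\delta\in\mathbb{F}_q^{*}$, and let $H_q$ be the graph on vertex set $\mathbb{F}_q^{3}$ in which $u\sim v$ precisely when $(u_1-v_1)^2+(u_2-v_2)^2+(u_3-v_3)^2=\delta$. Since a non-degenerate ternary quadratic form over $\mathbb{F}_q$ attains every nonzero value $q^2+O(q)$ times, $H_q$ has $q^3$ vertices, is $(q^2+O(q))$-regular, hence has $\gg q^5$ edges, and it is loopless because $\delta\ne 0$. The substantial point is that for a suitable $\delta$ the graph $H_q$ is $K_{3,3}$-free: the common neighbours of three distinct vertices $c_1,c_2,c_3$ lie in the intersection of the three spheres $S_i=\{x:\|x-c_i\|^2=\delta\}$, and subtracting the defining equations in pairs shows this set is contained in $S_1\cap\ell$, where $\ell$ is the line cut out by the two ``radical planes'' $\|x-c_1\|^2=\|x-c_i\|^2$, $i=2,3$, provided $c_1,c_2,c_3$ are not collinear (the collinear case being handled similarly). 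A line meets a sphere in at most two points, unless the direction of $\ell$ is isotropic for $x_1^2+x_2^2+x_3^2$ and $\ell$ lies entirely on the sphere; this degeneracy is excluded by choosing $\delta$ outside the square class of $x_1^2+x_2^2+x_3^2$ restricted to an isotropic plane, a condition that depends only on $q$ modulo $4$. With such $\delta$, no line lies on any sphere, so every three vertices of $H_q$ have at most two common neighbours and $H_q$ is $K_{3,3}$-free. Given $n$, I would then take $q$ to be the largest prime with $q^3\le n$; Bertrand's postulate gives $q\gg n^{1/3}$, so $H_q$ is a loopless $K_{3,3}$-free graph on at most $n$ vertices with $\gg n^{5/3}$ edges.

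Next let $G'=(S',T',E')$ be the bipartite double cover of $H_q$: take $S'$ and $T'$ to be disjoint copies of $\mathbb{F}_q^{3}$, and join $(u,0)\in S'$ to $(v,1)\in T'$ exactly when $uv\in E(H_q)$. Then $|S'|=|T'|=q^3$ and $|E'|=2|E(H_q)|\gg n^{5/3}$, and $G'$ is still $K_{3,3}$-free: a copy of $K_{3,3}$ in $G'$ would give distinct $u_1,u_2,u_3$ and distinct $w_1,w_2,w_3$ in $\mathbb{F}_q^{3}$ with $u_iw_j\in E(H_q)$ for all $i,j$, and since $H_q$ is loopless these six vertices are distinct and, together with the edges $u_iw_j$, form a copy of $K_{3,3}$ in $H_q$, which is impossible. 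Finally I would add $n-q^3$ isolated vertices to each of $S'$ and $T'$; every vertex of a $K_{3,3}$ has degree at least three, so the new vertices lie in none of them, and this produces a $K_{3,3}$-free bipartite graph $G=(S,T,E)$ with $|S|=|T|=n$ and $|E|\gg n^{5/3}$, which is the assertion of the lemma. The only genuine obstacle is the $K_{3,3}$-freeness of $H_q$ --- in particular identifying the correct square class for $\delta$ and disposing of the isotropic and collinear degeneracies; this is classical, so I would cite it rather than reprove it, or else replace $H_q$ by the projective norm graph of Alon, R\'onyai and Szab\'o with parameter $t=3$, whose $K_{3,3}$-freeness follows from their algebraic elimination lemma. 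The remaining steps --- the double cover and the padding, the latter relying on Bertrand's postulate because the construction only supplies graphs on $q^3$ vertices --- are routine.
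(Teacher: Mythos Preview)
Your argument is correct, but it takes a different route from the paper's. The paper simply \emph{cites} Brown's theorem to obtain a $K_{3,3}$-free graph $G_0$ on $2n$ vertices with $c'n^{5/3}$ edges, and then uses a one-line probabilistic argument: split the vertex set of $G_0$ into two equal halves uniformly at random and delete all edges within each half. Each edge survives with probability $\tfrac{n}{2n-1}>\tfrac12$, so by linearity of expectation some bipartition retains at least $\tfrac12 c'n^{5/3}$ edges, and the resulting bipartite graph is of course still $K_{3,3}$-free.

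What you do differently is (i) rebuild Brown's sphere graph $H_q$ from scratch rather than cite it, and (ii) replace the random bipartition by the deterministic \emph{bipartite double cover} together with padding by isolated vertices. Step (ii) is a perfectly valid alternative and has the advantage of being explicit; your justification that a $K_{3,3}$ in the double cover would pull back to a $K_{3,3}$ in $H_q$ (using looplessness to ensure the six preimages are distinct) is clean. Step (i), as you acknowledge, is where all the real work hides---the analysis of isotropic lines and the choice of the square class of $\delta$ is delicate---and since the paper is content to cite Brown, you might as well do the same. The paper's random-bipartition trick is shorter and avoids both the double cover and the Bertrand-postulate padding; your approach buys constructiveness at the cost of more moving parts.
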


\begin{proof}[Proof of Lemma~\ref{k33}]
Brown~\cite{Brown} proved that there exists a graph on $2n$ vertices with $c'n^{5/3}$ edges that does not contain $K_{3,3}$ as a subgraph, where $c' > 0$. Let us denote this graph by $G_0$. We use a standard probabilistic argument to show that there exists a bipartite graph $G$ that satisfies the requirements of the lemma. Let us divide the vertices of $G_0$ into two sets of equal sizes with uniform distribution, and delete every edge within the two sets. We delete every edge with probability $\frac{n-1}{2n-1}$, so the expected value of the number of remaining edges is at least $\frac{1}{2}c'n^{5/3}$. Hence, there exists such a bipartite graph with at least $\frac{1}{2}c'n^{5/3}$ edges as we claimed.
\end{proof}

The following statements will be used several times in the proofs.

\begin{lemma}\label{exgraph} We have the following estimates:
\begin{enumerate}
%\item $\mathrm{ex}(n;C_3,C_4)=(0.5+o(1))n^{3/2}$ (as $n\to \infty)$;
\item $n^{3/2}\ll \mathrm{ex}(n;C_3,C_4,C_5)\ll n^{3/2}$;
%\item $n^{4/3}\ll \mathrm{ex}(n;C_3,C_4,C_5,C_6)\ll n^{4/3}$;
\item $n^{4/3}\ll \mathrm{ex}(n;C_3,C_4,C_5,C_6,C_7)\ll n^{4/3}$;
%\item $n^{6/5}\ll \mathrm{ex}(n;C_3,C_4,\dots ,C_{10})\ll n^{6/5}$;
\item $n^{6/5}\ll \mathrm{ex}(n;C_3,C_4,\dots ,C_{11})\ll n^{6/5}$;
%\item for every $k\ge 3$, we have $\mathrm{ex}(n;C_3,C_4,\dots ,C_{4k})\gg n^{\frac{3k}{3k-1}}$;
\item for every $k\ge 3$, we have $\mathrm{ex}(n;C_3,C_4,\dots ,C_{4k+1})\gg n^{\frac{3k}{3k-1}}$;
%\item for every $k\ge 3$, we have $\mathrm{ex}(n;C_3,C_4,\dots ,C_{4k+2})\gg n^{\frac{3k+1}{3k}}$;
\item for every $k\ge 3$, we have $\mathrm{ex}(n;C_3,C_4,\dots ,C_{4k+3})\gg n^{\frac{3k+1}{3k}}$.
%\item $n^{5/3}\ll \mathrm{ex}(n,K_{3,3})\ll n^{5/3}$
\end{enumerate}
\end{lemma}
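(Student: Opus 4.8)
\emph{Overview.} All five estimates are classical facts from extremal graph theory, so the plan is really to assemble the right ingredients and check the parameters. I would split the proof according to whether a matching upper bound is asserted: for (1)--(3) both directions are needed, while for (4)--(5) only a lower bound is claimed.

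\emph{Upper bounds in (1)--(3).} Forbidding $C_3,C_4,\dots,C_{2r-1}$ is the same as having girth at least $2r$, and for graphs of this type I would use the Moore bound. Given such a graph with $e$ edges on $n$ vertices, first pass to a nonempty subgraph of minimum degree $\delta\gg e/n$ (repeatedly deleting a vertex of degree below half the current average cannot exhaust all vertices, since it removes fewer than $e$ edges in total). This subgraph still has girth at least $2r$, so the breadth-first tree of depth $r-1$ around any vertex has no back or cross edges, giving $n\ge 1+\delta\sum_{i=0}^{r-2}(\delta-1)^i\gg \delta^{r-1}$, hence $\delta\ll n^{1/(r-1)}$ and $e\ll n^{1+1/(r-1)}$. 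Taking $r=3,4,6$ yields the exponents $3/2$, $4/3$, $6/5$ in (1), (2), (3). (For (1) one may instead quote the Kővári--Sós--Turán bound for $C_4$ directly.)

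\emph{Lower bounds in (1)--(3).} I would take the bipartite incidence graphs of generalized polygons: a projective plane $PG(2,q)$ for (1), a symplectic generalized quadrangle $W(q)$ for (2), and a split Cayley generalized hexagon $H(q)$ for (3). For every prime power $q$ these are $(q+1)$-regular bipartite graphs whose two classes have $q^2+q+1$, $(q+1)(q^2+1)$, and $(q+1)(q^4+q^2+1)$ vertices, and whose incidence graphs have girth $6$, $8$, $12$ respectively; in particular they contain no $C_3,C_4,C_5$ (resp.\ no $C_3,\dots,C_7$; no $C_3,\dots,C_{11}$) and have roughly $q^3$, $q^4$, $q^6$ edges on roughly $q^2$, $q^3$, $q^5$ vertices. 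For an arbitrary large $n$, choose the largest prime power $q$ for which the relevant construction has at most $n$ vertices --- Bertrand's postulate bounds the next prime power by $2q$, so the vertex count is still $\gg n$ --- and add isolated vertices to reach exactly $n$. Since the exponents $3/2,4/3,6/5$ all exceed $1$, the loss is only a constant factor, giving $\gg n^{3/2}$, $\gg n^{4/3}$, $\gg n^{6/5}$.

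\emph{Lower bounds in (4)--(5), and the main obstacle.} Here the probabilistic deletion method and the simple Moore-type constructions are too weak, and the right tool is the construction of Lazebnik, Ustimenko and Woldar. For a prime power $q$ and an odd $m\ge 3$, the graph $D(m,q)$ --- and more efficiently each of its connected components $CD(m,q)$ --- is a $q$-regular bipartite graph of girth at least $m+5$, a component having roughly $q^{v(m)}$ vertices with $v(m)=m-\lfloor(m+2)/4\rfloor+1\approx 3m/4$. For (4) I would take $m=4k-3$ (odd since $k\ge 3$): the girth is then at least $4k+2$, so there is no cycle of length at most $4k+1$, while $v(m)=3k-1$, so the component is a $q$-regular graph on roughly $q^{3k-1}$ vertices, hence with $\gg n^{3k/(3k-1)}$ edges. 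For (5) take $m=4k-1$: the girth is at least $4k+4$, killing all cycles of length at most $4k+3$, and $v(m)=3k$, giving $\gg n^{(3k+1)/(3k)}$ edges. An arbitrary $n$ is again handled by taking $q$ maximal subject to the vertex bound and padding with isolated vertices. The only step that genuinely rests on the literature is this last one: the fact that passing from $D(m,q)$ to a connected component $CD(m,q)$ preserves the girth but cuts the number of vertices down to order $q^{v(m)}$ with $v(m)\approx 3m/4$ --- it is precisely this saving that yields the exponents $\tfrac{3k}{3k-1}$ and $\tfrac{3k+1}{3k}$ rather than the weaker $1+\tfrac1m$ coming from $D(m,q)$ itself.
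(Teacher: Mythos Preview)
Your argument is correct and rests on the same underlying constructions the paper does, but the packaging is a bit different. The paper's proof is a one-line reduction in each case: it quotes the known even-girth Tur\'an numbers $\mathrm{ex}(n;C_3,\dots,C_{2r})$ (from \cite{ERS}, \cite{BS}, \cite{Benson}, \cite{LUW}) and then, to extend the forbidden family from $C_{2r}$ to $C_{2r+1}$, takes a random bipartition of the vertex set and keeps only the crossing edges (exactly as in Lemma~\ref{k33}); the resulting bipartite graph has no odd cycles at all and retains a constant fraction of the edges. You instead work directly with the incidence graphs of generalized polygons and with the $CD(m,q)$ graphs, observing that they are bipartite to begin with, so no extra step is needed to kill the odd cycle of length $2r+1$. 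Both routes are equivalent in strength; the paper's version has the advantage of being a black-box reduction (any even-girth lower bound automatically upgrades by one), while yours is more self-contained, since you also supply the Moore bound for the upper bounds rather than citing it and you compute the $CD(m,q)$ parameters explicitly.
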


\begin{proof} 
\begin{enumerate}
    %\item See \cite{ERS}.
    \item It is known that $\mathrm{ex}(n;C_3,C_4)=(0.5+o(1))n^{3/2}$ as $n\to \infty$ (see \cite{ERS}). Similarly to the proof of Lemma~\ref{k33}, it can be proved that there exists a $\{ C_3,C_4\}$-free bipartite graph $G=(V,E)$, where $|V|=n$ and $|E|\gg n^{3/2}$. Then $G$ must be $\{ C_3,C_4,C_5\} $-free.    
    %\item This is a special case of a theorem of Bondy and Simonovits %\cite{BS}.
    \item It is known that $n^{4/3}\ll \mathrm{ex}(n;C_3,C_4,C_5,C_6)\ll n^{4/3}$ (see \cite{BS}). Similarly to the proof of Lemma~\ref{k33}, it can be proved that there exists a $\{ C_3,C_4,C_5,C_6\}$-free bipartite graph $G=(V,E)$, where $|V|=n$ and $|E|\gg n^{4/3}$. Then $G$ must be $\{ C_3,C_4,C_5,C_6,C_7\} $-free.
    %\item See \cite{Benson} or \cite{Singleton}.
    \item It is known that $n^{6/5}\ll \mathrm{ex}(n;C_3,C_4,\dots ,C_{10})\ll n^{6/5}$ (see \cite{Benson}).Similarly to the proof of Lemma~\ref{k33}, it can be proved that there exists a $\{ C_3,C_4,\dots ,C_{10}\}$-free bipartite graph $G=(V,E)$, where $|V|=n$ and $|E|\gg n^{6/5}$. Then $G$ must be $\{ C_3,C_4,\dots ,C_{11}\} $-free.
    %\item See \cite{LUW}.
    \item It is known that $\mathrm{ex}(n;C_3,C_4,\dots ,C_{4k})\gg n^{\frac{3k}{3k-1}}$ (see \cite{LUW}). Similarly to the proof of Lemma~\ref{k33}, it can be proved that there exists a $\{ C_3,C_4,\dots ,C_{4k}\}$-free bipartite graph $G=(V,E)$, where $|V|=n$ and $|E|\gg n^{\frac{3k}{3k-1}}$. Then $G$ must be $\{ C_3,C_4,\dots ,C_{4k+1}\} $-free.
    %\item See \cite{LUW}.
    \item It is known that $k\ge 3$, we have $\mathrm{ex}(n;C_3,C_4,\dots ,C_{4k+2})\gg n^{\frac{3k+1}{3k}}$ (see \cite{LUW}). Similarly to the proof of Lemma~\ref{k33}, it can be proved that there exists a $\{ C_3,C_4,\dots ,C_{4k+2}\}$-free bipartite graph $G=(V,E)$, where $|V|=n$ and $|E|\gg n^{\frac{3k+1}{3k}}$. Then $G$ must be $\{ C_3,C_4,\dots ,C_{4k+3}\} $-free.
    
    %\item This is Corollary 8.3 in \cite{Simonovits}.
    %\item See \cite{Brown}.
\end{enumerate}
\end{proof}

%Similar to the proof of Lemma~\ref{k33}, it can be proved from the previous lemma the following statements.

%\begin{lemma}\label{oddexgraph} We have the following estimates:
%\begin{enumerate}
%\item $n^{3/2}\ll \mathrm{ex}(n;C_3,C_4,C_5)\ll n^{3/2}$;
%\item $n^{4/3}\ll \mathrm{ex}(n;C_3,C_4,C_5,C_6,C_7)\ll n^{4/3}$;
%\item $n^{6/5}\ll \mathrm{ex}(n;C_3,C_4,\dots ,C_{11})\ll n^{6/5}$;
%\item for every $k\ge 2$, we have $\mathrm{ex}(n;C_3,C_4,\dots ,C_{4k+1})\gg %n^{\frac{3k}{3k-1}}$;
%\item for every $k\ge 3$, we have $\mathrm{ex}(n;C_3,C_4,\dots ,C_{4k+3})\gg n^{\frac{3k+1}{3k}}$.
%\item $n^{5/3}\ll \mathrm{ex}(n,K_{3,3})\ll n^{5/3}$
%\end{enumerate}
%\end{lemma}

Let $F$ be an $r$-uniform hypergraph. The Tur\'{a}n number $\mathrm{ex}(n,F)$ is the maximum number of edges in an $F$-free $r$-uniform hypergraph on $n$ vertices. Let $K^r_t$ be the complete $r$-uniform hypergraph on $t$ vertices. The following lemma was proved by de Caen \cite{Caen3}:
\begin{lemma}\label{Caen} For every $r\ge 2$ and $t>r$, we have
$$
\lim_{n\to \infty}\frac{\mathrm{ex}(n,K^r_t)}{\binom{n}{r}}\le 1-\frac{1}{\binom{t-1}{r-1}}.
$$
\end{lemma}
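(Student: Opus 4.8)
The first step is to pass to the complementary ``covering'' picture. An $r$-uniform hypergraph $\mathcal H$ on $[n]$ is $K^r_t$-free exactly when every $t$-element subset of $[n]$ has at least one of its $r$-subsets outside $\mathcal H$ — that is, when $\mathcal C:=\binom{[n]}{r}\setminus\mathcal H$ is a \emph{Tur\'an covering}: every $t$-set contains a member of $\mathcal C$. If $C(n,t,r)$ denotes the minimum size of such a covering, then $\mathrm{ex}(n,K^r_t)=\binom nr-C(n,t,r)$. Moreover $\mathrm{ex}(n,K^r_t)/\binom nr$ is non-increasing in $n$ (average over the $n$ induced subhypergraphs on $n-1$ vertices: each $r$-set survives in $n-r$ of them, giving $(n-r)\,\mathrm{ex}(n,K^r_t)\le n\,\mathrm{ex}(n-1,K^r_t)$), so the limit in the statement exists, and the lemma becomes equivalent to
$$\liminf_{n\to\infty}\ \frac{C(n,t,r)}{\binom nr}\ \ge\ \frac{1}{\binom{t-1}{r-1}}.$$

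\textbf{Step 2: the easy half of the covering bound.} Double counting pairs (member of $\mathcal C$, $t$-set containing it): each $t$-set contributes at least one such pair, and each $r$-set lies in $\binom{n-r}{t-r}$ of the $t$-sets, so $|\mathcal C|\binom{n-r}{t-r}\ge\binom nt$, i.e. $|\mathcal C|\ge\binom nt/\binom{n-r}{t-r}=\binom nr/\binom tr$. Since $\binom tr=\tfrac tr\binom{t-1}{r-1}$, this is smaller than the target by exactly a factor $t/r$, and the whole content of the lemma is that this loss can be removed.

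\textbf{Step 3: recovering the factor $t/r$ — the crux.} The easy bound is already attained by the uniform \emph{fractional} covering of the complete $t$-uniform hypergraph, so any genuine improvement must use that $\mathcal C$ is an honest set system — concretely, that every subset of $[n]$ containing no member of $\mathcal C$ (call such a set \emph{free}) has size at most $t-1$. I would follow de Caen and make the count of Step 2 ``smart'': fix the natural order $1<2<\dots<n$ and, for each $t$-set $B$, let $\phi(B)\in\mathcal C$ be the member contained in $B$ whose largest element is as small as possible, so that $\binom nt=\sum_{e\in\mathcal C}\#\{B:\phi(B)=e\}$. Writing $m=\max(e)$, the condition $\phi(B)=e$ forces the part $B\cap\{1,\dots,m-1\}$ to contain $e\setminus\{m\}$ and — crucially — to be \emph{free}, since any member of $\mathcal C$ inside $B\cap\{1,\dots,m-1\}$ would have maximum smaller than $m$. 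Disregarding freeness here one only recovers $\#\{B:\phi(B)=e\}\le\binom{n-r}{t-r}$ and hence the easy bound again; the entire gain comes from bounding the number of free subsets of $\{1,\dots,m-1\}$ that contain a prescribed $(r-1)$-set, and then summing these bounds over $e\in\mathcal C$ together with a convexity/averaging step to handle the very uneven contributions. (An essentially equivalent route runs the averaging over the codegree sequence $d(A)=\#\{v:A\cup\{v\}\in\mathcal C\}$, $A\in\binom{[n]}{r-1}$, using that the $t$-sets $A\cup S$ whose covering demand is \emph{not} met by members containing all of $A$ must be covered inside $[n]\setminus(A\cup N(A))$, which again forces the free-set structure into play.)

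\textbf{Main obstacle.} The real difficulty is making this bookkeeping produce the \emph{exact} constant $1/\binom{t-1}{r-1}$ rather than a constant-factor-weaker bound. Softer tools do not suffice: the crude ``take a $p$-random sub-hypergraph and delete one vertex from each surviving edge'' argument loses a factor $(1-1/r)^{r-1}$, and the Caro--Wei-type weighting that repairs this in the graph case $r=2$ (where the lemma is exactly Tur\'an's theorem) only controls the $2$-shadow and collapses for $r\ge3$. Pinning the constant is precisely de Caen's technical contribution, and carrying out the free-set estimate and the convexity summation above is where the actual work lies.
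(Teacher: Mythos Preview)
The paper does not prove this lemma at all: it simply states the result and cites de Caen's 1983 paper \cite{Caen3}. So there is no ``paper's proof'' to compare against --- the lemma is quoted as a black box.

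Your proposal is an outline, not a proof. Steps~1 and~2 are correct and standard (the complementation to coverings, the monotonicity of the Tur\'an density via averaging, and the crude double-count giving $|\mathcal C|\ge\binom nr/\binom tr$). But Step~3, which you rightly flag as the entire content of the lemma, is not carried out: you set up an ordered-selection map $\phi$ and note that the relevant prefixes must be free, then write that ``carrying out the free-set estimate and the convexity summation above is where the actual work lies.'' That is the whole theorem. As written, nothing in your sketch forces the constant $1/\binom{t-1}{r-1}$ rather than some weaker bound; the phrases ``bounding the number of free subsets \dots\ that contain a prescribed $(r-1)$-set'' and ``a convexity/averaging step'' are placeholders for an argument that has not been supplied. (For what it's worth, de Caen's actual proof does not run through an ordered $\phi$-map of this kind; it goes via links and an averaging/induction on $r$, closer in spirit to your parenthetical ``equivalent route'' than to the main line you describe.) If you want to give a self-contained proof rather than cite \cite{Caen3}, you need to actually execute that step and show the exact constant emerges.
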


We will use the following lemma:

\begin{lemma}\label{k=6-l1}
Let $k\geq 2$ be a positive integer. 
%Let ${P}=\{ p_1,\dots ,p_t \}$ be a finite set of distinct primes, and let ${A}$ be a multiset of integers containing elements of the form $p_ip_j$ with $p_i,p_j\in {P}$. Let $G({A})$ denote the multigraph with $V=\{ P_1,\dots ,P_t\} $ and $E=\{ (P_i,P_j): p_ip_j\in {A}\} $. If $G({A})$ is $\{C_3,\dots,C_{2k}\}$-free, then ${A}\in \gamma _{3k,3}$.
Let $Q_n=\{ p_1,\dots ,p_t\}$ be the set of primes not exceeding $\sqrt{n}$ and $R_n$ be the set of primes from the interval $(\sqrt{n},n]$. 
Let $S_n^{(k)}=R_n\cup A_n^{(k)}$, where  each element of the set $A_n^{(k)}$ is the product of two different primes from $Q_n$.  Let $G(A_n^{(k)})=(V_n,E_n)$ be the graph with $V_n=\{ P_1,\dots ,P_t\}$ and $E_n=\{ (P_i,P_j): p_i,p_j\in Q_n,\  p_ip_j\in A_n^{(k)} \} $. If $G(A_n^{(k)})$ is $\{ C_3,C_4,\dots ,C_{2k}\}$-free, then $S_n^{(k)}\in \gamma_{3k,3}$.
\end{lemma}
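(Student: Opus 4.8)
The goal is to show that if the graph $G(A_n^{(k)})$ is $\{C_3, C_4, \dots, C_{2k}\}$-free, then the set $S_n^{(k)} = R_n \cup A_n^{(k)}$ belongs to $\gamma_{3k,3}$; that is, the equation $b_1 b_2 \cdots b_{3k} = x^3$ has no solution with $b_1, \dots, b_{3k} \in S_n^{(k)}$ except trivial ones (where the multiset partitions into blocks of three equal elements). Suppose for contradiction that $b_1 \cdots b_{3k} = x^3$ is a nontrivial solution. The first step is to understand which elements of $S_n^{(k)}$ can occur. A prime $p \in R_n = (\sqrt n, n]$ satisfies $p^2 > n$, so no other element of $S_n^{(k)}$ is divisible by $p$ (elements of $A_n^{(k)}$ are products of two primes $\leq \sqrt n$, hence are $\leq n$ but never divisible by a prime $> \sqrt n$... actually a product of two primes each $\leq\sqrt n$ is $\leq n$ and its prime factors are $\leq \sqrt n < p$). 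Therefore if some $b_i \in R_n$, the exponent of that prime $p$ in the product $b_1 \cdots b_{3k}$ equals the number of indices $j$ with $b_j = p$, which must be a multiple of $3$ since the product is a cube. I would first peel off all such primes: group the $R_n$-elements into triples of equal primes (a trivial sub-collection), and reduce to the case where all $b_i \in A_n^{(k)}$.

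The second step handles the remaining $3k$ (or fewer, a multiple of $3$) elements, all lying in $A_n^{(k)}$, i.e., each is a product $p_i p_j$ of two distinct primes from $Q_n$, equivalently an edge of $G = G(A_n^{(k)})$. Their product being a perfect cube means that in the multiset union of these edges, every vertex (prime) of $G$ has degree divisible by $3$. So we obtain a sub-multigraph $H$ of $G$ (edges possibly repeated) on the vertex set, with $3k' \le 3k$ edges, in which every vertex has degree $\equiv 0 \pmod 3$, and $H$ is not a disjoint union of "triple edges" (three parallel copies of a single edge), since the solution is nontrivial. The heart of the argument is to show that such an $H$ forces $G$ to contain one of the short cycles $C_3, \dots, C_{2k}$.

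The third step — which I expect to be the main obstacle — is the graph-theoretic claim: a multigraph $H$ with all degrees divisible by $3$, with at most $3k$ edges, which is not a union of triple edges, contains a cycle of length between $3$ and $2k$ as a subgraph of the underlying simple graph $G$. The idea is: since every degree is a positive multiple of $3$ at each non-isolated vertex, $H$ has minimum degree $\ge 3$ on its support, so it certainly contains a cycle; one must bound its length. If $H$ uses a repeated edge but is not entirely triple edges, one should find a short cycle through a multi-edge or argue a parity/counting bound. If $H$ is simple (all edges distinct), then $H \subseteq G$ is a simple graph on at most $2k$ vertices (since it has at most $3k$ edges and min degree $\ge 3$, so at most $2 \cdot 3k/3 = 2k$ vertices) with min degree $\ge 3$, hence girth at most... well, a graph with minimum degree $\ge 2$ on $m$ vertices contains a cycle of length $\le m \le 2k$. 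The delicate point is ruling out that the only short cycles available have already been forbidden, and carefully handling multi-edges: e.g. a double edge plus structure, or a $C_3$ with one edge doubled. I would treat the multigraph case by repeatedly removing triple edges (which preserves the "all degrees divisible by $3$" property) until either we reach the empty graph (contradicting nontriviality) or a nonempty multigraph with at most $3k$ edges, min degree $\ge 3$ on its support, and no triple edge; then a double edge gives a $C_2$-like structure that combined with min degree $\ge 3$ yields a genuine short cycle in $G$, and the simple case is as above. Once $G$ contains some $C_\ell$ with $3 \le \ell \le 2k$, this contradicts the hypothesis that $G(A_n^{(k)})$ is $\{C_3, \dots, C_{2k}\}$-free, completing the proof.

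Throughout, the only number-theoretic input is unique factorization and the size estimate separating $R_n$ from the prime support of $A_n^{(k)}$; everything else is the combinatorial translation "product is a cube" $\Leftrightarrow$ "all vertex degrees in the associated edge-multiset are divisible by $3$", together with the elementary fact that a multigraph with minimum degree $\ge 3$ on few vertices has small girth.
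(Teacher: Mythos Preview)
Your plan is correct and essentially the same as the paper's proof: peel off triples (the paper does this by induction on $k$, you do it by directly removing all $R_n$-triples and then all triple edges), then observe that the underlying \emph{simple} graph of the remaining edge-multiset has at most $2k$ non-isolated vertices, each of simple degree $\ge 2$, forcing a cycle $C_\ell$ with $3\le\ell\le 2k$ in $G(A_n^{(k)})$. Your separate ``double-edge'' case is unnecessary --- passing directly to the underlying simple graph (exactly what the paper does) handles multiplicities $\le 2$ uniformly, since multigraph degree $\ge 3$ with edge multiplicities $\le 2$ already gives simple degree $\ge 2$.
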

\begin{proof}
We argue by induction on $k$. For $k=2$, the condition is that $G(A_n^{(k)})$ is $\{C_3,C_4\}$-free. For the sake of contradiction, let us assume that ${A}\notin \gamma _{6,3}$, that is there exists a nontrivial solution  $a_1a_2a_3a_4a_5a_6=x^3$ with $x\in \mathbb{Z}^+$, $a_1\le a_2\le \dots \le a_6$ and $a_i\in S_n^{(k)}$ (for every $i$). If $a_i\in R_n$ for some $1\le i\le 6$, then  $a_1=a_2=a_3$ and $a_4=a_5=a_6$, we get a trivial solution. If $a_i\notin R_n$ for every $1\le i\le 6$, then there exist distinct primes $p_1',p_2',p_3',p_4'\in Q_n$ such that $\{ a_1,\dots ,a_6 \}=\{ p_1'p_2',p_1'p_2', p_3'p_4',p_3'p_4',p_1'p_3',p_2'p_4' \}$ or $\{ a_1,\dots ,a_6\}=\{ p_1'p_2',p_1'p_3', p_1'p_4',p_2'p_3',p_2'p_4',p_3'p_4' \}$. Thus $G(A_n^{(k)})$ must contain a $C_4$, a contradiction. 

In the induction step, let us suppose that $G(A_n^{(k)})$ is $\{ C_3,\dots, C_{2k}\}$-free, but $S_n^{(k)}\notin \gamma _{3k,3}$, that is, there exists a nontrivial solution $a_1\dots a_{3k}=x^3$ with $x\in \mathbb{Z}^+$, $a_1\le a_2\le \dots \le a_{3k}$ and $a_i\in S_n^{(k)}$ (for every $i$). 

If $a_i\in R_n$ for some $i$, then the multiset $\{a_1,a_2,\dots,a_{3k}\}$ contains (at least) three copies of $a_i$, and after deleting these we are done by the induction hypothesis. Hence, it can be assumed that $a_1,\dots,a_{3k}\in A_n^{(k)}$. By the induction hypothesis we may also assume that $a_i<a_{i+2}$ for $1\le i\le 3k-2$.

Let us assign a graph $G'=(V_n,E_n')$ to the set (not to the multiset) $\{ a_1,\dots ,a_{3k}\} $ such that $(P_i,P_j)\in E_n'$ if and only if $p_ip_j=a_h$ for some $1\le h\le 3k $. The number of vertices having positive degree in $G'$ is at most $6k/3=2k$. Moreover, the condition $a_i<a_{i+2}$ (for every $1\le i\le 3k-2$) implies that $d(P_i)\ge 2$ whenever $d(P_i)>0$ in the graph $G'$. Hence, $G'$ must contain a cycle $C_{\ell}$ for some $3\le \ell\le 2k$, therefore $G(A_n^{(k)})$ also contains a cycle $C_{\ell}$, a contradiction.

\end{proof}

We need the following lemma to prove Theorem~\ref{thm-k=1233-F} and \ref{thm-k=36-F}.

\begin{lemma}\label{Gammak=6-l1}
Let $k\geq 4$ be a positive integer. 
%Let ${P}=\{ p_1,\dots ,p_t \}$ be a finite set of distinct primes, and let ${A}$ be a multiset of integers containing elements of the form $p_ip_j$ with $p_i,p_j\in {P}$. Let $G({A})$ denote the multigraph with $V=\{ P_1,\dots ,P_t\} $ and $E=\{ (P_i,P_j): p_ip_j\in {A}\} $. If $G({A})$ is $\{C_3,\dots,C_{2k}\}$-free, then ${A}\in \gamma _{3k,3}$.
Let $P$ be the set of primes. Let us suppose that the positive integers $a_1<a_2<\dots <a_{3k}$ are the products of two primes, where the canonical form of $a_1a_2\dots a_{3k}$ is $p_1^{\alpha_1}p_2^{\alpha _2}\dots p_u^{\alpha _u}$, where $p_i\in P$. Let us assign a graph $G=(V_u,E_u)$ to the set $\{ a_1,\dots ,a_{3k}\}$ such that $V_u=\{ P_1,\dots ,P_u\}$ and $E_u=\{ (P_i,P_j): p_i,p_j\in P,\  p_ip_j=a_h \text{ for some $h$} \} $.  
%Let us suppose that the set $S_n^{(k)}$ contains the product of two different primes from $Q_n$.  Let $G(S_n^{(k)})=(V_n,E_n)$ be the graph with $V_n=\{ P_1,\dots ,P_t\}$ and $E_n=\{ (P_i,P_j): p_i,p_j\in Q_n,\  p_ip_j\in S_n^{(k)} \} $. 
If $G$ is $\{ C_3,C_4,\dots ,C_{k}\}$-free, then $a_1a_2\dots a_{3k}$ is not a perfect cube.
\end{lemma}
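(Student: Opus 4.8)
The plan is to argue by contradiction: suppose $G$ is $\{C_3,\dots,C_k\}$-free but $a_1a_2\cdots a_{3k}=p_1^{\alpha_1}\cdots p_u^{\alpha_u}$ is a perfect cube, i.e. $3\mid \alpha_i$ for every $i$. First I would translate this divisibility condition into a statement about $G$. Since each $a_h$ is a product of two primes, it contributes a single edge $(P_i,P_j)$ (or, if $a_h=p_i^2$, a loop at $P_i$, which I treat as contributing $2$ to $\deg(P_i)$); the exponent $\alpha_i$ of $p_i$ in the product equals the degree $d(P_i)$ of $P_i$ in $G$ (counting loops twice). Hence the cube condition is exactly that $3\mid d(P_i)$ for every vertex $P_i$ of positive degree. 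In particular every vertex of $G$ has degree $\geq 3$, so $G$ is certainly not a forest — it contains a cycle. The number of edges of $G$ is $|E_u|\le 3k$ (there are $3k$ values $a_h$, and distinct $a_h$ give distinct edges since $a_h$ determines $\{p_i,p_j\}$), while $\sum_i d(P_i)=2|E_u|\le 6k$; since each positive degree is at least $3$, the number of vertices of positive degree is at most $2k$.

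The heart of the argument is to show $G$ contains a \emph{short} cycle, one of length at most $k$, contradicting the hypothesis. The key observation is a girth bound: a graph in which every vertex has degree $\geq 3$ and which has at most $2k$ vertices cannot have girth exceeding $k$. Indeed, if the girth were $g\geq k+1$, then a standard Moore-type bound gives at least $1+3\sum_{i=0}^{\lfloor (g-1)/2\rfloor - 1}2^i$ (or the corresponding even-girth expression) vertices, which grows exponentially in $g$ and already exceeds $2k$ once $g\ge k+1$ for $k\geq 4$. I would need to check this numerical inequality carefully for small $k$ (the cases $k=4,5,6,7$ are where the Moore bound is closest to $2k$), but for $k=4$ the bound is: girth $\geq 5$ and minimum degree $\geq 3$ forces $\geq 10 > 2k = 8$ vertices; for larger $k$ the exponential growth makes this comfortable. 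So some cycle $C_\ell$ with $3\le\ell\le k$ appears in $G$, and we are done.

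The main obstacle I anticipate is the degenerate possibility of \emph{loops} (when some $a_h=p^2$) and \emph{parallel structure collapsing} — i.e. making sure the "edge" picture is set up so that the degree-equals-exponent correspondence is exact and the Moore bound still applies. A loop at $P_i$ means $a_h=p_i^2$ contributes $2$ to $d(P_i)$; if $P_i$ has a loop and $3\mid d(P_i)$ then $d(P_i)\geq 3$ still forces at least one genuine (non-loop) edge at $P_i$, so the cycle-finding argument via minimum degree $\geq 3$ goes through on the underlying simple graph after noting that a loop itself is a "cycle of length $1$" — but since we only forbid $C_3,\dots,C_k$, I must rule out that $G$'s only cycles are loops. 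A vertex with a loop has at least one more edge, and chasing that out, the subgraph on positive-degree vertices still has enough edges relative to vertices ($|E|\geq \tfrac32\cdot(\text{\# vertices})$, counting a loop as one edge contributing $2$ to the sum of degrees, so genuinely $|E|\ge \tfrac32 v$ up to the loop bookkeeping) to force an honest short cycle. I would handle this by passing to the subgraph induced on positive-degree vertices, discarding loops but recording that each loop-vertex retains degree $\geq 2$ from non-loop edges (since $3\mid d$, removing a loop leaves degree $\geq 1$, and... ) — the cleanest route is: delete loops, observe every remaining vertex has degree $\geq 2$ in the loopless graph except possibly loop-vertices of original degree exactly $3$ which now have degree $1$; iteratively delete degree-$\le 1$ vertices; what remains is nonempty (else $G$ were a "loop-forest" with too few edges) with minimum degree $\geq 2$ and at most $2k$ vertices, hence contains a cycle, and a minimum-degree-$\geq 2$, $\le 2k$-vertex graph has a cycle of length $\le 2k$... which is not quite $\le k$. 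To get length $\le k$ I really do need the minimum-degree-$\geq 3$ version of the Moore bound, so the loop analysis must be done without throwing away too much degree; I would argue directly on $G$ that the set of positive-degree vertices with the incident edges (loops counted once as edges but twice in degree) has $\sum d(P_i)=2|E_u|\le 6k$ and each $d(P_i)\ge 3$, and a breadth-first-search from any vertex reaches $> 2k$ vertices within distance $\lfloor (k-1)/2\rfloor$ unless a cycle of length $\le k$ closes up first.
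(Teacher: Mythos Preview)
Your core argument is correct: once you observe that every vertex of $G$ has degree divisible by $3$ (hence $\geq 3$) and that $u\leq 2k$, the Moore bound for minimum degree $3$ shows that girth $\geq k+1$ would force more than $2k$ vertices for every $k\geq 4$, a contradiction. This is a genuinely different route from the paper's proof, which avoids invoking the Moore bound and instead argues directly: assuming the shortest cycle has length $L>k$, each of its $L$ vertices has (by $d\geq 3$ and chordlessness) a neighbour outside the cycle, but there are only $u-L\leq 2k-L<L$ vertices available, so two cycle-vertices share an external neighbour; this yields two cycles whose lengths average $L/2+2<L$, contradicting minimality. Your approach is cleaner to state if one is willing to quote the Moore bound; the paper's approach is fully self-contained and arguably more elementary.

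Two remarks. First, your extended discussion of loops is unnecessary here: the edge set $E_u$ is defined as pairs $(P_i,P_j)$, and in every application of the lemma the $a_h$ are products of two \emph{distinct} primes, so $G$ is a simple graph and the Moore bound applies without modification. Second, since the $a_h$ are distinct and each determines its prime pair, you actually have $|E_u|=3k$ exactly (not merely $\leq 3k$), so $\sum_i d(P_i)=6k$ on the nose; this does not affect the argument but tightens the bookkeeping.
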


\begin{proof}
For the sake of contradiction, let us assume that $a_1a_2\dots a_{3k}=x^3$ with $x\in \mathbb{Z}^+$. We show that $G$ contains a cycle $C_{\ell}$ with $\ell\le k$. Clearly, $d(P)\ge 3$ for every $P\in V_u$, so $G$ must contain a cycle. Moreover, the number of vertices is at most $6k/3=2k$, that is $u\le 2k$. Let us suppose that the shortest length of a cycle in the graph $G$ is $L>k$. Let us assume that the vertices of a cycle of length $L$ are $P_{i_1},\dots P_{i_L}$, $1\le i_1<\dots< i_L\le u$. Note that the shortest cycle is chordless,  thus $d(P)\ge 3$ implies that for each $P_{i_s}$ there exists a $Q_{i_s}\in V_u$ such that $(P_{i_s},Q_{i_s})\in E_u$ and $Q_{i_s}\ne P_{i_t}$ for every $1\le s,t\le L$. The conditions $L>k$ and $u\le 2k$ imply that there are integers $1\le s<t\le L$ such that $Q_{i_s}=Q_{i_t}$. Then the length of the cycle of vertices $P_{i_1},P_{i_2},\dots ,P_{i_s},Q_{i_s},P_{i_t},p_{i_{t+1}},\dots P_{i_L}$ is $s+2+L-t$ and the length of the cycle of vertices $P_{i_s},P_{i_{s+1}},\dots ,P_{i_t},Q_{i_s}$ is $t-s+2$. Hence, the length of the shortest cycle is at most $\frac{s+2+L-t+t-s+2}{2}=\frac{L}{2}+2<L$, a contradiction. 

%Hence, $G'$ must contain a cycle $C_{\ell}$ for some $3\le \ell\le k$, therefore $G(S_n^{(k)})$ also contains a cycle $C_{\ell}$, a contradiction.
\end{proof}

We need a couple of number theoretic lemmas to prove Theorem~\ref{thm-k=4}.

\begin{lemma}\label{ess-l12}
    If $1<y<2$ and $\eps>0$, then for $x>x_1(\eps)$ we have $$|\{m:\ m \leq x,\  \Omega(m) \geq y \log \log x\}| < \frac{x}{(\log x)^{1+y\log y - y - \eps}}.$$
\end{lemma}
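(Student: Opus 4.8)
The plan is to estimate the counting function via a standard application of the Markov–Rankin trick (exponential moment bound), which is the classical route to upper bounds on the number of integers with many prime factors. Write $N_y(x) := |\{m \le x : \Omega(m) \ge y\log\log x\}|$. For any parameter $z > 1$ we have the trivial inequality
\[
N_y(x) \le z^{-y\log\log x}\sum_{m \le x} z^{\Omega(m)},
\]
since each $m$ counted satisfies $z^{\Omega(m)} \ge z^{y\log\log x}$ when $z>1$. So the whole proof reduces to (i) a good upper bound on the moment $\sum_{m\le x} z^{\Omega(m)}$ and (ii) an optimization over $z$.

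For step (i), I would use the well-known estimate $\sum_{m \le x} z^{\Omega(m)} \ll_z x (\log x)^{z-1}$, valid for $1 < z < 2$ (the restriction $z < 2$ is exactly what keeps the relevant Euler product / Dirichlet series convergent, since the local factor at a prime $p$ behaves like $(1 - z/p)^{-1}$ and $\sum z^{\Omega(m)} m^{-s}$ has a pole structure governed by $\zeta(s)^z$ times something analytic for $\Re z < 2$). This can be obtained either by Mertens-type summation using the multiplicativity of $z^{\Omega(m)}$, or by quoting a Selberg–Delange / Landau-type theorem; since the excerpt allows me to assume standard facts, I would simply cite this as a known estimate (it is, e.g., in Tenenbaum's book, or follows from the Erdős–Sárközy–Sós paper's own lemmas, as the exponent $1+y\log y - y$ is precisely the one appearing there). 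Plugging this in gives, for $1 < z < 2$,
\[
N_y(x) \ll_z x\,(\log x)^{z-1}\,(\log x)^{-y\log z} = x\,(\log x)^{z - 1 - y\log z}.
\]

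For step (ii), I minimize the exponent $g(z) := z - 1 - y\log z$ over $z \in (1,2)$. Differentiating, $g'(z) = 1 - y/z = 0$ gives $z = y$, and since $1 < y < 2$ this optimal $z$ lies in the admissible range $(1,2)$ — this is exactly where the hypothesis $1 < y < 2$ is used, and it is the one genuinely delicate point to flag. At $z = y$ we get $g(y) = y - 1 - y\log y$, so the exponent of $\log x$ becomes $-(1 + y\log y - y)$, matching the claim up to the loss coming from the implied constant $\ll_z$ depending on $y$. That constant is absorbed by degrading the exponent by an arbitrarily small $\eps > 0$: choosing $z = y$ exactly we have $N_y(x) \le C(y)\, x (\log x)^{-(1+y\log y - y)}$, and for $x > x_1(\eps)$ the factor $C(y)$ is dominated by $(\log x)^{\eps}$, yielding $N_y(x) < x(\log x)^{-(1 + y\log y - y - \eps)}$ as required.

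The only real obstacle is marshalling the moment bound $\sum_{m\le x} z^{\Omega(m)} \ll_z x(\log x)^{z-1}$ cleanly; everything else is elementary calculus and the Markov inequality. If one wants a self-contained argument rather than a citation, the cleanest path is induction / convolution: write $z^{\Omega(m)}$ as a Dirichlet convolution of the constant $1$ with a function supported on a thin set and sum by parts, or use the hyperbola method together with Mertens' theorem $\sum_{p \le x} 1/p = \log\log x + O(1)$ to run the standard recursive bound on $\sum_{\Omega(m) = j,\, m \le x} 1$. I would present the citation version for brevity and remark that $z<2$ is necessary for this estimate, explaining the coincidence that the optimal $z=y$ respects it precisely because $y < 2$.
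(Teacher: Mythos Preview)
Your argument is correct: the Rankin/Markov inequality with the moment bound $\sum_{m\le x} z^{\Omega(m)} \ll_z x(\log x)^{z-1}$ for $1<z<2$, followed by optimizing at $z=y$, is exactly the standard route and yields the stated estimate. The paper itself gives no proof at all here---it simply cites \cite[Corollary~2]{ES} (Erd\H{o}s--S\'ark\"ozy)---so you have supplied what the paper outsources; the underlying method in that reference is the same Rankin trick you describe.
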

\begin{proof}
    See \cite[Corollary 2]{ES}.
\end{proof}
\begin{lemma}\label{ess-l11}
    If $0 < y \leq 1$ and $\eps > 0$, then for $x>x_2(\eps)$ we have \[
    \frac{x}{(\log x)^{1+y\log y-y+\eps}} < |\{m:\ m \leq x,\ \Omega(m) \leq y \log \log x\}| < \frac{x}{(\log x)^{1+y\log y-y-\eps}}
    \]
\end{lemma}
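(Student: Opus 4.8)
\emph{Proof plan for Lemma~\ref{ess-l11}.} Both bounds concern the distribution of $\Omega$ and reduce, via Stirling's formula, to the classical estimate $\pi_j(x)\asymp\frac{x}{\log x}\cdot\frac{(\log\log x)^{j-1}}{(j-1)!}$ in the range $j\ll\log\log x$. Write $L:=\log\log x$ and $k:=\lfloor yL\rfloor$; since $\Omega$ is integer-valued, the quantity to be estimated is $\sum_{0\le j\le k}\pi_j(x)$, where $\pi_j(x)=|\{m\le x:\Omega(m)=j\}|$. For the upper bound I will use only the \emph{Hardy--Ramanujan inequality}: there are absolute constants $C_1,C_2$ (with $C_2\ge0$) such that $\pi_j(x)\le C_1\frac{x}{\log x}\cdot\frac{(L+C_2)^{j-1}}{(j-1)!}$ for all $j\ge1$ and $x\ge3$. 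For the lower bound I will reprove the matching lower estimate for $\pi_k(x)$ directly, by an explicit construction.

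\textbf{Upper bound.} Since $y\le1$ and $C_2\ge0$ we have $k-1\le L+C_2-1$, so the summands $\frac{(L+C_2)^{j-1}}{(j-1)!}$ are nondecreasing for $1\le j\le k$; summing the Hardy--Ramanujan inequality over $0\le j\le k$ (the term $j=0$ contributing only $\pi_0(x)=1$) therefore gives
\[
\sum_{0\le j\le k}\pi_j(x)\ \ll\ \frac{x}{\log x}\sum_{1\le j\le k}\frac{(L+C_2)^{j-1}}{(j-1)!}\ \ll\ \frac{xL}{\log x}\cdot\frac{(L+C_2)^{k-1}}{(k-1)!}.
\]
By Stirling's formula, using $k-1=yL+O(1)$,
\[
\log\!\left(\frac{(L+C_2)^{k-1}}{(k-1)!}\right)=(k-1)\Bigl(1-\log\tfrac{k-1}{L}\Bigr)+O(\log k)=(y-y\log y)\,L+o(L),
\]
so that quotient equals $(\log x)^{\,y-y\log y+o(1)}$, whence
\[
\sum_{0\le j\le k}\pi_j(x)\ \ll\ x(\log x)^{\,y-y\log y-1+o(1)}\ =\ x(\log x)^{-(1+y\log y-y)+o(1)}\ <\ \frac{x}{(\log x)^{1+y\log y-y-\eps}}
\]
once $x>x_2(\eps)$; here the factor $L$ and the absolute constants, being $(\log x)^{o(1)}$, are absorbed by $(\log x)^{\eps}$. (When $y=1$ one may simply use the trivial bound $\le x<x(\log x)^{\eps}$.)

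\textbf{Lower bound.} It suffices to bound $\pi_k(x)$ from below. Put $T:=\exp(\log x/L^2)$, so by Mertens' theorem $\sum_{p\le T}1/p=\log\log T+O(1)=L-2\log L+O(1)\ge L-3\log L$ for large $x$. To each integer $m'$ with $\Omega(m')=k-1$ and all prime factors at most $T$ (hence $m'\le T^{k-1}\le x^{1/L}$), and each prime $q\in(x/(2m'),x/m']$, associate $m:=m'q\le x$; as $q>T$ exceeds every prime factor of $m'$, the pair $(m',q)$ is recoverable from $m$ (take $q$ to be the unique prime factor of $m$ above $T$), so the map is injective, and $\Omega(m)=k$. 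By the prime number theorem each admissible $m'$ admits $\gg x/(m'\log x)$ such $q$, and from $\bigl(\sum_{p\le T}1/p\bigr)^{k-1}=\sum_{m'}c(m')/m'$ with $1\le c(m')\le(k-1)!$ we get $\sum_{m'}1/m'\ge\bigl(\sum_{p\le T}1/p\bigr)^{k-1}/(k-1)!$. Hence
\[
\sum_{0\le j\le k}\pi_j(x)\ \ge\ \pi_k(x)\ \gg\ \frac{x}{\log x}\cdot\frac{(L-3\log L)^{k-1}}{(k-1)!},
\]
and the same Stirling computation (with $L-3\log L$ in place of $L+C_2$, which changes the estimate only by a factor $e^{o(L)}$) shows this quotient is $(\log x)^{\,y-y\log y+o(1)}$. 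Therefore the right-hand side exceeds $x(\log x)^{-(1+y\log y-y)-\eps}=x/(\log x)^{1+y\log y-y+\eps}$ once $x>x_2(\eps)$.

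\textbf{Main obstacle.} The only genuine difficulty is \emph{uniformity in $k$}: every estimate has to remain valid while $k\sim yL\to\infty$. On the upper side this is exactly the force of the Hardy--Ramanujan inequality, which is uniform over all $j$; on the lower side it is why I do not quote an asymptotic formula for $\pi_k(x)$ but instead count the integers $m=m'q$ by hand, producing $\gg\frac{x}{\log x}\cdot\frac{1}{(k-1)!}\bigl(\sum_{p\le T}1/p\bigr)^{k-1}$ of them outright. The remaining work is purely bookkeeping: one checks that each error term --- the $O(\log L)$ loss in Mertens' estimate, the $O(\log k)$ in Stirling, the $O(1)$ perturbation from $k=\lfloor yL\rfloor$, the factor $L$ from collapsing the sum, and the absolute constants from Hardy--Ramanujan and from the prime number theorem --- is only $(\log x)^{o(1)}$ on the logarithmic scale and hence disappears into the $(\log x)^{\pm\eps}$ slack; the displayed Stirling identity is what makes this explicit. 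The boundary value $y=1$, where $1+y\log y-y=0$, needs no separate treatment, as indicated above.
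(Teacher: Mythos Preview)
Your argument is correct. The paper itself does not give a proof at all: it simply writes ``This follows from a result of Hardy and Ramanujan~\cite{HR}'', so there is no approach to compare against. What you have done is supply a self-contained reconstruction of the classical estimate --- the Hardy--Ramanujan uniform upper bound on $\pi_j(x)$ for the upper inequality, and a direct counting argument (integers $m'q$ with $m'$ smooth and $q$ a large prime) for the lower inequality --- together with the Stirling computation that converts $\frac{L^{k-1}}{(k-1)!}$ into $(\log x)^{y-y\log y+o(1)}$. All the uniformity checks (the $O(\log L)$ loss from Mertens, the use of Bertrand/PNT in the short interval $(x/(2m'),x/m']$ with $m'\le x^{1/L}$, the injectivity via the unique large prime factor, and the absorption of polylogarithmic factors into the $(\log x)^{\pm\eps}$ slack) are handled correctly. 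In short, you have proved what the paper merely cites.
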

\begin{proof}
    This follows from a result of Hardy and Ramanujan \cite{HR}.
\end{proof}

\begin{lemma}\label{k=4-l1}
Suppose that $a_1,a_2,a_3,a_4\in \left[\frac{n}{\log n}, n\right]$ such that $d^2|a_i$ implies $d\le \log n$. If $a_1a_2a_3a_4$ is a perfect cube, then there exist integers $u_i,v_i,w_i\in \left(\frac{\sqrt[3]{n}}{\log ^{16} n}, \sqrt[3]{n}\log ^{16} n\right)$ such that $a_i=u_iv_iw_i$.
\end{lemma}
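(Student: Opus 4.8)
The plan is to argue multiplicatively on each prime separately, exploiting that the numbers live in the narrow window $[n/\log n, n]$ and that they are almost cubefree in the quantitative sense given by the hypothesis ($d^2\mid a_i\Rightarrow d\le\log n$). Write $a_i=\prod_p p^{e_{i,p}}$. Since $a_1a_2a_3a_4=x^3$, for every prime $p$ we have $e_{1,p}+e_{2,p}+e_{3,p}+e_{4,p}\equiv 0\pmod 3$. The idea is to distribute, for each prime $p$, the exponent $e_{1,p}+e_{2,p}+e_{3,p}+e_{4,p}$ evenly into three ``bins'' $u_i,v_i,w_i$ in such a way that the total exponent of $p$ placed in each of $u_i,v_i,w_i$ equals $\tfrac13(e_{1,p}+e_{2,p}+e_{3,p}+e_{4,p})$ — wait, that is not quite right; instead, for each $i$ we want to split $e_{i,p}=f_{i,p}+g_{i,p}+h_{i,p}$ so that $u_i=\prod p^{f_{i,p}}$, $v_i=\prod p^{g_{i,p}}$, $w_i=\prod p^{h_{i,p}}$, and each of these three integers is comparable to $n^{1/3}$ up to the stated polylog factors. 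The key arithmetic input is that $a_i\ge n/\log n$ forces $u_iv_iw_i$ to be large, so if I can keep all three roughly balanced I land near $n^{1/3}$.

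First I would handle the ``large'' prime powers: primes $p$ with $p^2\mid a_i$ satisfy $p\le\log n$ by hypothesis, so the $p$-part of $a_i$ coming from primes with $e_{i,p}\ge 2$ is at most $(\log n)^{O(1)}$ (there are $O(\log n/\log\log n)$ such primes, each contributing at most a bounded power since $a_i\le n$ — more carefully, $\prod_{p\le\log n}p^{e_{i,p}}$ where $p^{e_{i,p}}\le n$, giving a crude bound like $(\log n)^{O(\log n)}$, which is too big). Let me instead separate: let $b_i$ be the product of $p^{e_{i,p}}$ over primes $p$ with $p\le\log n$, and $c_i=a_i/b_i$ the product over $p>\log n$; by hypothesis $c_i$ is cubefree (indeed squarefree away from small primes is not guaranteed, but $p\mid c_i$ with multiplicity $\le 2$, and multiplicity $2$ would need $p\le\log n$, contradiction, so $c_i$ is squarefree). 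Hence $c_i$ is a product of at most $\log n/\log\log n + O(1)$ distinct primes each exceeding $\log n$, but also $c_i\le n$, so $c_i$ has at most $O(\log n/\log\log n)$ prime factors — actually since each factor is $>\log n$ and the product is $\le n$, it has $\le \log n/\log\log n$ of them, fine. Now for $b_i$: since $b_i\le a_i\le n$ and $b_i\ge a_i/c_i\ge (n/\log n)/n$, it could be as small as $1$, so I only need $b_i\le n$.

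Next comes the distribution step, which I expect to be the main obstacle. For each prime $p>\log n$ dividing the product: it divides some of the $c_i$ with total multiplicity $\equiv 0\pmod 3$, i.e.\ total multiplicity $0$ or $3$ (since each $c_i$ contributes $0$ or $1$, and there are $4$ of them, the total is in $\{0,3\}$ — it cannot be $6$). If the total is $3$, then exactly three of the four $a_i$'s contain $p$ once; assign $p$ to bin $u$ for the first such $i$, bin $v$ for the second, bin $w$ for the third — no wait, I need each bin of each individual $a_i$ to be balanced. Let me reconsider: since exactly three $a_i$'s get $p$ (say $i\in\{i_1,i_2,i_3\}$), put $p$ into $u_{i_1}$, into $v_{i_2}$, into $w_{i_3}$. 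Then $\prod_i u_iv_iw_i$ still equals $\prod a_i=x^3$ but more importantly I should track, summing over all such primes $p$, how the mass distributes: each prime contributes $\log p$ to exactly one of the twelve quantities $\log u_i,\log v_i,\log w_i$. I need a counting/averaging argument (or a greedy rebalancing) showing the twelve values can be made to lie within $\log\log^{16}n$ of $\tfrac13\log(a_1a_2a_3a_4)/? $ — actually we want each $u_i\approx a_i^{1/3}\approx n^{1/3}$. The clean way: process primes of $c_i$ greedily in decreasing order, always adding the current prime to whichever of $u_i,v_i,w_i$ is currently smallest (with the cross-$a_i$ matching constraint handled by noting we have freedom in which of the three recipients gets $u$ vs $v$ vs $w$), and use that each prime factor of $c_i$ is $\le n^{?}$. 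Since $c_i$ is squarefree with all prime factors $>\log n$, and the number of prime factors exceeding $n^{1/3}$ is at most $2$ (as their product is $\le n$), the greedy balancing of the remaining small-ish primes (all $\le n^{1/3}$) keeps the three partial products within a factor $n^{1/3}$ of each other, and combined with $b_i\le n$ being distributed arbitrarily among the three bins one checks each of $u_i,v_i,w_i$ lies in $(n^{1/3}/\log^{16}n,\ n^{1/3}\log^{16}n)$. The bookkeeping of the exponent $\log^{16}n$ is the routine-but-delicate part: one must absorb $b_i\le n=(n^{1/3})^3$ split unevenly (worst case all in one bin, costing a factor $n^{2/3}$ — that breaks the bound!). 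So in fact I must distribute $b_i$ carefully too, splitting the small-prime part evenly using that $p^{e_{i,p}}\le n$ for each individual small prime and there are $O(\log n/\log\log n)$ of them, hence a greedy balance leaves each bin's $b_i$-contribution within a bounded factor times $(\log n)^{O(\log n/\log\log n)}=n^{o(1)}$ of the cube root — still possibly too large. The resolution: $b_i\le a_i$ but also each small prime power $p^{e_{i,p}}$ with $p\le\log n$ satisfies $p^{e_{i,p}}\le n$ AND if $e_{i,p}\ge 3$ then that prime power is itself a near-cube which we peel off wholesale; what remains after peeling cubes has all exponents $\le 2$, is supported on primes $\le\log n$, so is $\le(\log n)^{2\pi(\log n)}=\exp(O(\log n))=n^{o(1)}$, hence negligible against any power $\log^{\text{const}}n$ only if that constant—hmm, $n^{o(1)}$ beats $\log^{16}n$, good, that works. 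So: peel off perfect cube factors of each $a_i$ entirely (they contribute equally, a cube, split as $(\text{cube root})^3$), then distribute the squarefree-ish remainder by greedy balancing, and verify the window. The main obstacle is precisely this careful exponent-tracking to land inside $(n^{1/3}/\log^{16}n,\ n^{1/3}\log^{16}n)$ rather than merely ``near $n^{1/3}$''.
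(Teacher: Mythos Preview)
Your proposal has a genuine gap: the greedy per-index balancing cannot work, because you never feed the cube hypothesis back into the splitting of an individual $a_i$. Nothing in your argument rules out, say, $c_1=q_1q_2$ with two primes $q_1,q_2\approx\sqrt{n}$ (and the rest of $a_1$ of polylog size). Any factorisation $a_1=u_1v_1w_1$ then forces one factor to be at most $\log^{O(1)}n$, far outside the window. Your ``process primes in decreasing order, add to the smallest bin'' scheme treats each $a_i$ in isolation and so cannot avoid this; the observation that at most two primes exceed $n^{1/3}$ is no help when there are \emph{only} two primes.

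What makes the lemma true is that the cube condition, \emph{together with} $a_j\ge n/\log n$ for all four $j$, excludes such shapes. The paper exploits this structurally: after stripping a polylog factor one reaches squarefree $e_1,e_2,e_3,e_4$ with $e_1e_2e_3e_4=y^3$ and $n/\log^{10}n\le e_i\le n$. Since every prime of $y$ lies in exactly three of the $e_i$, one has the canonical factorisation
\[
e_i=\gcd(e_i,e_j,e_k)\cdot\gcd(e_i,e_j,e_\ell)\cdot\gcd(e_i,e_k,e_\ell),\qquad \{i,j,k,\ell\}=\{1,2,3,4\},
\]
and each triple gcd equals $y/e_m$ for the missing index $m$. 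The two-sided bounds on every $e_m$ then pin each of these three factors into $[n^{1/3}/\log^{O(1)}n,\,n^{1/3}\log^{O(1)}n]$, and the peeled polylog pieces are absorbed. So the three factors of $e_i$ are not chosen by balancing --- they are \emph{forced} by which of the other $e_j$ share each prime, and their sizes are controlled by the \emph{complementary} $e_m$. Your correct observation that each large prime lies in exactly three of the four $a_i$'s is the right starting point, but it has to be turned into this gcd identity rather than a load-balancing heuristic. (A smaller issue: $(\log n)^{2\pi(\log n)}=\exp\bigl((2+o(1))\log n\bigr)=n^{2+o(1)}$, not $n^{o(1)}$; the hypothesis actually gives the much stronger bound $c_i^2d_i^3\le\log^3 n$ on the non-squarefree part, which is what one should use.)
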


\begin{proof}
Suppose that the conditions of the lemma hold for $a_1, a_2, a_3, a_4$, and let $a_i=b_ic_i^2d_i^3$, where $b_i$ and $c_i$ are squarefree and $\gcd(b_i,c_i)=1$. Then $c_id_i\le \log n$ and $$n\ge b_i=\frac{a_i}{c_i^2d_i^3}\ge \frac{\frac{n}{\log n}}{\log ^3 n}=\frac{n}{\log ^4 n} $$
Clearly, 
$$
\frac{n^4}{\log ^4n}\le a_1a_2a_3a_4=b_1b_2b_3b_4c_1^2c_2^2c_3^2c_4^2d_1^3d_2^3d_3^3d_4^3=x^3\le n^4
$$
for some $x\in \mathbb{Z}^+$. Let
$$
m=c_1^2c_2^2c_3^2c_4^2d_1^3d_2^3d_3^3d_4^3,
$$
where $m\le \log ^{12}n$. Let us define the positive integers $e_i$ as $e_i=\frac{b_i}{\gcd(b_i,m)}$. Since $b_i$ is squarefree we have
$$
\gcd(b_i,m)\le c_1c_2c_3c_4d_1d_2d_3d_4\leq \sqrt{m} \leq \log ^6n
$$
and
$$
n\ge e_i=\frac{b_i}{\gcd(b_i,m)}\ge \frac{\frac{n}{\log ^4n}}{\log ^6n}=\frac{n}{\log ^{10}n}.
$$
Using that $\gcd(e_i,m)=1$ and $\gcd(e_i,\gcd(b_j,m))=1$ for every $1\le i,j\le 4$ we get that
$$
\gcd(e_1e_2e_3e_4,\gcd(b_1,m)\gcd(b_2,m)\gcd(b_3,m)\gcd(b_4,m)m)=1.
$$
It follows from 
$$
a_1a_2a_3a_4=e_1e_2e_3e_4\gcd(b_1,m)\gcd(b_2,m)\gcd(b_3,m)\gcd(b_4,m)m=x^3
$$
that
$$
e_1e_2e_3e_4=y^3
$$
for some $y\in \mathbb{Z}^+$. Since $e_i$ is squarefree we have
$$
e_1=\gcd(e_1,e_2,e_3)\gcd(e_1,e_2,e_4)\gcd(e_1,e_3,e_4),
$$
$$
e_2=\gcd(e_1,e_2,e_3)\gcd(e_1,e_2,e_4)\gcd(e_2,e_3,e_4),
$$
$$
e_3=\gcd(e_1,e_2,e_3)\gcd(e_1,e_3,e_4)\gcd(e_2,e_3,e_4),
$$
and
$$
e_4=\gcd(e_1,e_2,e_4)\gcd(e_1,e_3,e_4)\gcd(e_2,e_3,e_4).
$$
Hence
$$
y^3=e_1e_2e_3e_4=\gcd(e_1,e_2,e_3)^3\gcd(e_1,e_2,e_4)^3\gcd(e_1,e_3,e_4)^3\gcd(e_2,e_3,e_4)^3,
$$
that is
$$
y=\gcd(e_1,e_2,e_3)\gcd(e_1,e_2,e_4)\gcd(e_1,e_3,e_4)\gcd(e_2,e_3,e_4).
$$
It follows from $\frac{n^4}{\log ^{40}n}\le e_1e_2e_3e_4\le n^4$  that $\frac{n^{4/3}}{\log ^{40/3}}n\le y\le n^{4/3}$. Since $\gcd(e_1,e_2,e_3)=\frac{y}{e_4}$ we get
$$
\frac{n^{1/3}}{\log ^{40/3}n}\le \gcd(e_1,e_2,e_3)\le n^{1/3}\log ^{10}n.
$$
Similarly,
$$
\frac{n^{1/3}}{\log ^{40/3}n}\le \gcd(e_1,e_2,e_4),\gcd(e_1,e_3,e_4),\gcd(e_2,e_3,e_4)\le n^{1/3}\log ^{10}n.
$$
Let 
$$
a_1=\left( \gcd(e_1,e_2,e_3)\right) \left( \gcd(e_1,e_2,e_4)\gcd(b_1,m)\right) \left( \gcd(e_1,e_3,e_4)c_1^2d_1^3\right) =u_1v_1w_1,
$$
where
$$
\frac{n^{1/3}}{\log ^{16} n}\le u_1,v_1,w_1\le n^{1/3}\log ^{16}n,
$$
which completes the proof.
\end{proof}

\begin{lemma}\label{k=4-l2}
    Let $B \seq [n]$ be the set of those positive integers $b$ that can be written in the form $b=uvw$ with $u,v,w \in \zz^+$ and $u,v,w\in \left[\frac{\sqrt[3]{n}}{\log ^{16}n}, \sqrt[3]{n}\log ^{16}n\right]$. Then for every $\eps > 0$ there exists $n_0(\eps)$ such that $$|B| \leq \frac{n}{(\log n)^{1-\frac{e\log 3}{2\sqrt{3}}-\eps }}$$ for $n \geq n_0(\eps)$.
\end{lemma}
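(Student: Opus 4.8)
The plan is to split $B$ according to the number of prime factors of its elements, at a threshold chosen so that the two resulting estimates coincide. Throughout set $I:=\left[\frac{\sqrt[3]{n}}{\log^{16}n},\,\sqrt[3]{n}\log^{16}n\right]$, so that by definition every $b\in B\seq[n]$ admits a representation $b=uvw$ with $u,v,w\in I$. Put $y:=e/\sqrt{3}$, and observe that $1<y<2$ while $0<y/3<1$; these inequalities are exactly what let us invoke Lemma~\ref{ess-l12} and Lemma~\ref{ess-l11} in the two cases. The point will be that $y=e/\sqrt{3}$ is the value for which the exponent produced by the ``$\Omega$ large'' case equals the one produced by the ``$\Omega$ small'' case, the common value being $1-\frac{e\log 3}{2\sqrt{3}}$.

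First I would treat $B_1:=\{b\in B:\ \Omega(b)>y\log\log n\}$. Since $B_1\seq\{m\le n:\ \Omega(m)\ge y\log\log n\}$ and $1<y<2$, Lemma~\ref{ess-l12} (with $x=n$) gives $|B_1|<n/(\log n)^{1+y\log y-y-\eps}$ for $n$ large. Using $\log y=1-\tfrac12\log 3$, a direct computation yields $1+y\log y-y=1-\frac{e\log 3}{2\sqrt{3}}$, so this part is already within the claimed bound.

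The main work is the complementary set $B_2:=\{b\in B:\ \Omega(b)\le y\log\log n\}$. For $b\in B_2$ fix a representation $b=uvw$ with $u,v,w\in I$; then $\Omega(u)+\Omega(v)+\Omega(w)=\Omega(b)\le y\log\log n$, so at least one factor, say $u$, satisfies $\Omega(u)\le\tfrac13\Omega(b)\le\tfrac{y}{3}\log\log n$. Hence every element of $B_2$ is a multiple of some $u\in I$ with $\Omega(u)\le\tfrac{y}{3}\log\log n$, and therefore $|B_2|\le\sum_{u}\lfloor n/u\rfloor\le n\sum_{u}1/u$, the sums being over such $u$. To bound $\sum 1/u$ I would not estimate $1/u$ by $\log^{16}n/\sqrt[3]{n}$ — that loses the factor $\log^{32}n$ coming from the width of $I$ — but instead decompose $I$ into $O(\log\log n)$ dyadic blocks $[2^i,2^{i+1})$ (equivalently, use partial summation over $I$). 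On each block $2^i\asymp n^{1/3}$, so $\log\log 2^{i+1}=\log\log n+O(1)$ and the condition $\Omega(u)\le\tfrac{y}{3}\log\log n$ is, for $n$ large, implied by $\Omega(u)\le(\tfrac{y}{3}+\eps)\log\log 2^{i+1}$; since $\tfrac{y}{3}+\eps<1$, Lemma~\ref{ess-l11} and continuity of $\beta\mapsto 1+\beta\log\beta-\beta$ give $\#\{u\in[2^i,2^{i+1}):\ \Omega(u)\le\tfrac{y}{3}\log\log n\}\ll 2^i/(\log n)^{1+\frac{y}{3}\log\frac{y}{3}-\frac{y}{3}-\eps}$, so each block contributes $\ll(\log n)^{-(1+\frac{y}{3}\log\frac{y}{3}-\frac{y}{3})+\eps}$ to $\sum 1/u$. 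Summing the $O(\log\log n)$ blocks and using $\log\frac{y}{3}=\log y-\log 3=1-\tfrac32\log 3$ one gets $1+\tfrac{y}{3}\log\tfrac{y}{3}-\tfrac{y}{3}=1-\frac{e\log 3}{2\sqrt{3}}$, hence $|B_2|\ll n/(\log n)^{1-\frac{e\log 3}{2\sqrt{3}}-\eps}$. Adding the two parts proves the lemma for $n\ge n_0(\eps)$.

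The hard part is the bookkeeping in the second case: one must keep the summation confined to $I$ so as not to lose the width factor $\log^{32}n$, one must reconcile the threshold $\tfrac{y}{3}\log\log n$ (measured against $\log\log n$) with the hypothesis of Lemma~\ref{ess-l11} (measured against $\log\log 2^{i+1}$), and above all one must verify that $y=e/\sqrt{3}$ is exactly the value equalizing the two exponents, i.e.\ the solution of $y\log y-y=\tfrac{y}{3}\log\tfrac{y}{3}-\tfrac{y}{3}$, which rearranges to $\log y=1-\tfrac12\log 3$, that is $y=e^{1-\frac12\log 3}=e/\sqrt{3}$. Since $y\mapsto 1+y\log y-y$ is increasing on $(1,2)$ and $y\mapsto 1+\tfrac{y}{3}\log\tfrac{y}{3}-\tfrac{y}{3}$ is decreasing there, this choice maximizes the smaller of the two exponents, which is why exactly the constant $1-\frac{e\log 3}{2\sqrt{3}}$ appears.
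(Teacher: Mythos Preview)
Your proof is correct and follows the same overall strategy as the paper: split at the threshold $y=e/\sqrt{3}$, handle the large-$\Omega$ part with Lemma~\ref{ess-l12}, and the small-$\Omega$ part with Lemma~\ref{ess-l11}, the two exponents coinciding at $1-\frac{e\log 3}{2\sqrt{3}}$.

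The one technical difference is in the treatment of $B_2$. The paper fixes the two \emph{unconstrained} factors $u,v\in I$ and applies Lemma~\ref{ess-l11} to count the third factor $w\le n/(uv)$ with $\Omega(w)\le\frac{e}{3^{3/2}}\log\log n$; the residual sum $\sum_{(u,v)\in I^2}\frac{1}{uv}\le\bigl(\sum_{m\in I}\frac{1}{m}\bigr)^2=O((\log\log n)^2)$ is then harmless. You instead put the $\Omega$-constraint on a single factor $u$ and bound $|B_2|\le n\sum_u 1/u$, which forces you into a dyadic decomposition of $I$ to feed Lemma~\ref{ess-l11}. Both routes work; the paper's version avoids the dyadic bookkeeping at the cost of summing over a pair, while yours sums over a single variable but has to reconcile the threshold against $\log\log 2^{i+1}$ block by block. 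Neither is materially simpler, and both absorb the leftover $\log\log n$ powers into the $\eps$.
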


\begin{proof}
Fix $\eps > 0$. Let $B_1$ be the set of all $b=uvw \in B$ such that  $$\min\{\Omega(u), \Omega(v), \Omega(w)\} > \frac{e}{3^{1.5}}\log \log n,$$ and let $B_2$ be the set of all $b=uvw \in B$ such that $$\Omega(w) \leq \frac{e}{3^{1.5}}\log \log n.$$ Then $B = B_1 \cup B_2$, so $|B| \leq |B_1| + |B_2|$.

Let $n \geq n_1(\eps)$ as per Lemma~\ref{ess-l12}. Now if $b \in B_1$, then $$\Omega(b)=\Omega(u)+\Omega(v)+\Omega(w) > \frac{e}{\sqrt{3}} \log \log n,$$ so it follows from Lemma~\ref{ess-l12} that  $$|B_1| \leq \frac{n}{(\log n)^{1+\frac{e}{\sqrt{3}} \log \frac{e}{\sqrt{3}} - \frac{e}{\sqrt{3}} - \eps}} = \frac{n}{(\log n)^{1-\frac{e \log 3}{2\sqrt{3}}-\eps}}.$$

For the estimation of the size of $B_2$, let 
$$S_n = \left\{(u,v):\ \frac{\sqrt[3]{n}}{(\log n)^{16}}\le u,v\le \sqrt[3]{n}(\log n)^{16}\right\}$$ 
and for each $(u,v) \in S_n$ let 
$$W_{u,v} := \left\{w:\  w \leq \frac{n}{uv},\  \Omega(w) \leq \frac{e}{3^{1.5}} \log \log n\right\},$$
$$W_{u,v}^* := \left\{w:\  w \leq \frac{n}{uv},\  \Omega(w) \leq \left(\frac{e}{3^{1.5}}+\frac{1}{\log \log n}\right) \log \log \frac{n}{uv}\right\}.$$
A simple calculation shows that, for $(u,v) \in S_n$ we have $$\log \log n - \log \log \frac{n}{uv} = \log 3 + o(1)$$
and
$$
\frac{e}{3^{1.5}} \log \log n \le \left(\frac{e}{3^{1.5}}+\frac{1}{\log \log n}\right) \log \log \frac{n}{uv}
$$
if $n$ is large enough, so there is an $N$ such that $W_{u,v} \seq W_{u,v}^*$ for all $(u,v) \in S_n$ whenever $n \geq N$. 
%Indeed, \[\frac{e}{3^{1.5}} \log \log n = \frac{e}{3^{1.5}} \log \log \frac{n}{uv} + \frac{e}{3^{1.5}} \left(\log \log n - \log \log \frac{n}{uv}\right) \tag{$\ast$}\] where the second term tends to $\frac{e}{3^{1.5}}\log 3$ when $n \to \infty$. In particular for $\delta= \frac{1}{2}$, \[
%(\ast) \leq \frac{e}{3^{1.5}} \log \log \frac{n}{uv} + \frac{e}{3^{1.5}} \left(\log 3 + \delta\right)
%\] for sufficiently large $n$. Then \[
%\frac{\log \log \frac{n}{uv}}{\log \log n} = 1- \frac{\log \log n - \log \log \frac{n}{uv}}{\log \log n} \geq 1 - \frac{\log 3 + \delta}{\log \log n}.
%\] Here the right-hand side tends to $1$ as $n \to \infty$, so for large enough $n$, \[
% 1 - \frac{\log 3 + \delta}{\log \log n} \geq \frac{e}{3^{1.5}} \left(\log 3 + \delta\right)
%\] which was what we wanted. 
Hence,
\[
|B_2| \leq \sum\limits_{(u,v) \in S_n} |W_{u,v}| \leq \sum\limits_{(u,v) \in S_n} |W_{u,v}^*|
\] for $n \geq N$.

From Lemma~\ref{ess-l11}, it follows that

\[
|W_{u,v}^*| \leq \frac{\frac{n}{uv}}{\left(\log \frac{n}{uv}\right)^{1+\frac{e}{3^{1.5}} \log \frac{e}{3^{1.5}}- \frac{e}{3^{1.5}} - \frac{\eps}{3}}} = \frac{\frac{n}{uv}}{\left(\log \frac{n}{uv}\right)^{1-\frac{e \log 3}{2\sqrt{3}}- \frac{\eps}{3}}}
\] for $n \geq n_2(\eps)$. Hence,
\begin{equation*}
\begin{split}
    |B_2| &\leq \sum\limits_{(u,v) \in S_n} |W_{u,v}^*| \leq \sum\limits_{(u,v) \in S_n}\frac{\frac{n}{uv}}{\left(\log \frac{n}{uv}\right)^{1-\frac{e \log 3}{2\sqrt{3}}- \frac{\eps}{3}}}\\
    & \ll \frac{n}{(\log n)^{1-\frac{e\log 3}{2 \sqrt{3}}-\frac{\eps}{2}}} \sum\limits_{(u,v) \in S_n} \frac{1}{uv}\\
    & \leq \frac{n}{(\log n)^{1-\frac{e\log 3}{2 \sqrt{3}}-\frac{\eps}{2}}} \left(\sum\limits_{\frac{\sqrt[3]{n}}{(\log n)^{16}} \leq m \leq \sqrt[3]{n}(\log n)^{16}}\frac{1}{m}\right)^2\\
    & \leq \frac{n}{(\log n)^{1-\frac{e\log 3}{2 \sqrt{3}}-\frac{\eps}{2}}} (\log \log n)^2 \ll \frac{n}{(\log n)^{1-\frac{e\log 3}{2 \sqrt{3}}-\eps}}.
\end{split}
\end{equation*}
\end{proof}
    
To prove the upper bounds of Theorems~\ref{thm-k=9-F}, \ref{thm-k=1233-F}, and \ref{thm-k=36-F}, we need one more lemma. The following notation is required: a $d$-equipartition of an integer $n$ is a partition of $n$ into parts of size at least $d$ such that the largest part in the partition is as small as possible. For fixed positive integer $d$ and $n\ge d$, write  $||n||$ for the size of the largest part in a $d$-equipartition of $n$. We will use the following result of Verstra\"{e}te \cite[Section 6.2, Proof of Theorem 2]{Ver}: 

\begin{lemma}\label{Verstraete} 
For every $k\ge d^2$, $d|k$ we have
$$
F_{k,d}(N)-\sum_{i=1}^{\left|\left|\frac{k}{d}\right|\right|-1}\pi \left( \frac{N}{j} \right)\ll N^{1-\frac{1}{2d}}.
$$
\end{lemma}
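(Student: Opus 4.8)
The plan is to reconstruct Verstraète's argument in the reference, specialising and packaging it in the notation used here. The key point is that the extremal-type bound should come from a product-free (in the relevant multiplicative sense) structure together with an upper bound on the number of integers in $[N]$ that are products of a bounded number of ``medium-sized'' prime factors. First I would observe that if $A\subseteq[N]$ has property $P_{k,d}$, then after removing the primes $p\le N$ (there are $\pi(N)$ of them) and, more generally, the prime powers and the integers $j\cdot p$ with $p$ prime and $j<\|k/d\|$ small (these account for the subtracted sum $\sum_{j=1}^{\|k/d\|-1}\pi(N/j)$, since $p$ ranges over primes in $(N/j,N/(j-1)]$-type ranges and one keeps one representative multiple per small $j$), the remaining set $A'$ must avoid a very rigid configuration: no $k$ distinct elements of $A'$ have product a perfect $d$-th power. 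The threshold $k\ge d^2$ is exactly what guarantees, via a pigeonhole/Turán-type argument on the exponents of primes modulo $d$, that once an element has ``enough'' prime factors one can always complete a $d$-th-power product; hence every surviving element must have a bounded number $\Omega(m)=O_{k,d}(1)$ of prime factors, or more precisely must lie in a thin set.

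Next I would make this quantitative. The elements of $A'$ that are not primes or the excluded small multiples must, in order to avoid short $d$-th-power relations, essentially be confined to integers whose cube-free (more generally $d$-free) part has all prime factors in a window around $N^{1/d}$, or be ``smooth-ish'' with few prime factors; in either case one bounds their count. The cleanest route is: any $m\in A'$ either has a prime factor $p>N^{1/(2d)}$ appearing to the first power — and then writing $m=p\cdot m'$ with $m'\le N^{1-1/(2d)}$ gives at most $N^{1-1/(2d)}\cdot(\text{number of admissible }p)$ such, but a further Turán/counting argument (the avoidance of $d$-th power products among $k$ terms forces a $K_{d,\ldots}$-free or cycle-free incidence structure between these large primes and the cofactors, cf. Lemmas~\ref{k=6-l1} and~\ref{Gammak=6-l1}) caps this by $O(N^{1-1/(2d)})$ — or $m$ is $N^{1/(2d)}$-smooth, and the number of $N^{1/(2d)}$-smooth integers in $[N]$ is $\rho(2d)\,N+o(N)$... but that is too big, so in fact one needs the stronger constraint that such smooth $m$ have $\Omega(m)$ bounded, reducing the count to $O(N/(\log N)^{c})=o(N^{1-1/(2d)})$ via Lemma~\ref{ess-l11}-type estimates.

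Concretely, I would set up an auxiliary graph (or $d$-uniform hypergraph) on the prime factors exactly as in Lemmas~\ref{k=6-l1}/\ref{Gammak=6-l1}: to each surviving $m=q_1q_2\cdots$ associate its prime-divisor set, and show that property $P_{k,d}$ forces this family of sets to avoid all short cycles / small complete subhypergraphs up to size governed by $k/d$; then the relevant Turán bound (Lemma~\ref{exgraph} or Lemma~\ref{Caen}, depending on $d$) bounds the number of edges, hence the number of surviving $m$ with two comparable prime factors, by $N^{1-1/(2d)}$ up to constants, while the remaining $m$ (with one dominant prime factor) are counted by the $p\cdot m'$ decomposition above. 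Summing the two contributions yields $|A'|\ll N^{1-1/(2d)}$, which is the claim. The main obstacle I expect is the bookkeeping in the second step: correctly identifying which multiples of primes by small integers $j$ must be allowed in the extremal set (to get exactly $\sum_{j=1}^{\|k/d\|-1}\pi(N/j)$ and not something slightly different), and verifying that the hypergraph-Turán exponent coming out of the ``avoid $d$-th power products of $k$ terms'' condition is precisely $1-\frac{1}{2d}$ rather than something weaker — this is where the hypothesis $k\ge d^2$ must be used with care, and where I would simply cite the detailed computation in \cite[Section 6.2]{Ver} rather than redo it.
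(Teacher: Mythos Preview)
The paper does not give its own proof of this lemma at all: it is stated purely as a quotation of Verstra\"ete's result \cite[Section~6.2, Proof of Theorem~2]{Ver}, so there is nothing in the paper to compare your argument against beyond that citation.

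As for your proposal itself, it is not a proof but a heuristic outline, and one with a gap you yourself flag. You have the right coarse picture --- sort elements of $A$ by their largest prime factor $\ell(a)$; for each prime $p$ in a high range there can be at most $\lVert k/d\rVert-1$ multiples $jp\in A$ (else $k$ of them give a $d$-th power), which produces exactly the sum $\sum_{j=1}^{\lVert k/d\rVert-1}\pi(N/j)$; and what remains must be bounded by $O(N^{1-1/(2d)})$. But that last step, the only substantive one, is never carried out. Your first attempt (count $N^{1/(2d)}$-smooth integers) you correctly discard as giving a main term of size $\rho(2d)N$. Your fallback (a hypergraph Tur\'an argument \`a la Lemmas~\ref{k=6-l1}/\ref{Gammak=6-l1}) is left completely unspecified: you do not say which hypergraph, which forbidden configuration, or why the resulting exponent is $1-\tfrac{1}{2d}$ rather than something else, and the lemmas you point to only handle elements that are products of \emph{two} primes, not arbitrary integers with small largest prime factor. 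You then close by saying you ``would simply cite the detailed computation in \cite[Section~6.2]{Ver} rather than redo it'' --- which is precisely, and only, what the paper does.
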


%\begin{proof}
%We follow Verstra\"{e}te's original proof, %with a little the modification. In the %proof of Theorem 2. in \cite{Ver}  $J=||\frac{k}{d}||$, $n=\lfloor \frac{1}{2}\log _N - \log _2 J \rfloor $ and 
%$$
%Y_i=\{ p \hbox{ prime }: \frac{N}{J2^{i+1}}%<p\le \frac{N}{J2^i}\}    
%$$ for $0\le i\le n$. Verstra\"{e}te used  %the trivial estimation $|Y_i|=O(\frac{N}%{2^i})$. In fact, the bound $|Y_i|%=O(\frac{N}{2^i\log N})$ is also fulfilled, %and this gives the improvement
%$$
%F_{k,d}(N)\le \sum_{j=1}^{||\frac{k}
%{d}||-1}\pi \left( \frac{N}{j} \right)+O\left( \frac{N^{1-\frac{1}{2d}}}{(\log N)^{1-\frac{1}{2d}}} \right).
%$$
%\end{proof}

%\section{Case $k=2$}

\section{Proofs of Theorems}

\begin{proof}[Proof of Theorem~\ref{thm-k=2}]
First, we show that $F_{2,3}(n)=f_{2,3}(n)+1$.

If $A \seq [n]$ is a set such that $a_1a_2=x^3$ has no solution in $A$ with $a_1\neq a_2$, then there is at most one perfect cube in $A$. If we omit this perfect cube from the set, there will be no solution for $a_1a_2=x^3$ in $A$ (where $a_1,a_2$ are not necessarily different): if there was a solution, it would have to be of the form $a_1^2=x^3$, so $a_1$ would have to be yet another perfect cube. Hence $F_{2,3}(n)-1 \leq f_{2,3}(n)$. Vice versa, if $A \seq [n]$ is such that $a_1a_2=x^3$ has no solution in $A$ (with $a_1,a_2$ not necessarily different), then $1 \notin A$, and $a_1a_2=x^3$ has no solution in $A \cup \{1\}$ with $a_1 \neq a_2$. Hence, $f_{2,3}(n)+1 \leq F_{2,3}(n)$.

We prove the theorem for $f_{2,3}(n)$. Let us consider a set $A\seq [n]$ of maximal size avoiding solutions to $a_1a_2=x^3$, by definition, $|A|=f_{2,3}(n)$. We will estimate at least how many elements we have to leave out from $[n]$ to get such a set.

Suppose we have a solution $a_1a_2=x^3$. Let us write $a_1=uv^2w^3$, where $u,v$ are squarefree and $\gcd(u,v)=1$, so $uv^2$ is the cubefree part of $a_1$. Notice that the cubefree part of $a_2$ must be $vu^2$, hence $a_2\in \{vu^2t^3: \ t\in \mathbb{Z}^+ \}$. We will call $a$ and $b$ equivalent if their cubefree parts are the same. We will call $a$ and $b$ {\it opposites} if their product is a perfect cube. Clearly, if a number is in $A$, then there is no number in $A$ from the opposite equivalence class. Therefore, $A$ can be partitioned into equivalence classes, and it contains exactly one (the larger) class from every pair of opposite equivalence classes.

Hence, the minimum number of elements we have to leave out is the sum of the sizes of the smaller equivalence classes (from each opposite pair). The size of the class containing integers with cubefree part $uv^2$ is $\left\lfloor \sqrt[3]{\frac{n}{uv^2}}\right\rfloor$, since for the elements $uv^2t^3\leq n$ the bound $ t \le \sqrt[3]{\frac{n}{uv^2}}$ must hold. For a fixed pair $u,v$  (satisfying $u<v$ and $\gcd(u,v)=1$) we have the following pair of opposite equivalence classes: $\{uv^2t^3: \; t\in \mathbb{Z}^+\} \cap [n]$ and $\{vu^2t^3: \; t\in \mathbb{Z}^+\} \cap [n]$. Since $u<v$, the first class will be smaller (or of the exact same size), so the minimum number of elements we have to leave out is exactly
$$\sum_{\substack{1\leq u<v \\ \gcd(u,v)=1 \\ uv^2\le n \\ u,v \  \text{squarefree}}} \left \lfloor \sqrt[3]{\frac{n}{uv^2}} \right \rfloor =n-f_{2,3}(n).$$

First we give an upper bound for $n-f_{2,3}(n)$ using this formula.

Observe that
$$\sum_{\substack{1\leq u<v, \\ uv \ \text{squarefree}}} \left \lfloor \sqrt[3]{\frac{n}{uv^2}} \right \rfloor = \sum_{\substack{1\leq u<v, \\ uv \ \text{squarefree}}} \sum_{t=1}^{\left \lfloor \sqrt[3]{\frac{n}{uv^2}} \right \rfloor } 1 = \sum_{t=1}^{\left\lfloor \sqrt[3]{n} \right\rfloor}\sum_{\substack{1\leq u<v, \\ uv \: \text{squarefree}, \\ uv^2\le \frac{n}{t^3}, }} 1 < \sum_{t=1}^{\left\lfloor \sqrt[3]{n} \right\rfloor}\sum_{\substack{1\leq u<v,  \\ uv^2\le \frac{n}{t^3} }} 1.$$

The inner sum can be rewritten as
$$\sum_{\substack{1\leq u<v,  \\ uv^2\le \frac{n}{t^3} }} 1=\sum_{u=1}^{\left\lfloor \frac{\sqrt[3]{n}}{t}\right\rfloor}\sum_{v=u+1}^{\left\lfloor \sqrt{\frac{n}{ut^3}}\right\rfloor} 1 \leq \sum_{u=1}^{\left\lfloor \frac{\sqrt[3]{n}}{t}\right\rfloor} \left(\sqrt{\frac{n}{ut^3}}-u\right).$$ 

As $\sqrt{\frac{n}{ut^3}}-u$ is monotonically decreasing in $u$, we get that 
$$\sum_{u=1}^{\left\lfloor \frac{\sqrt[3]{n}}{t}\right\rfloor} \left(\sqrt{\frac{n}{ut^3}}-u\right) < \int\limits_0^{\frac{\sqrt[3]{n}}{t}} \left(\sqrt{\frac{n}{ut^3}}-u\right)\, du = 1.5 \, \frac{n^{2/3}}{t^2}.$$
Hence,
$$\sum_{t=1}^{\left\lfloor \sqrt[3]{n} \right\rfloor}\sum_{\substack{1\leq u<v,  \\ uv^2\le \frac{n}{t^3} }} 1 < \sum_{t=1}^{\left\lfloor \sqrt[3]{n} \right\rfloor} 1.5 \, \frac{n^{2/3}}{t^2}  < 1.5 \, {n^{2/3}} \sum_{t=1}^\infty \frac{1}{t^2}  =  \frac{\pi^2}{4} n^{2/3},$$
which proves the upper bound with $c_2=\frac{\pi^2}{4}$.

Now we provide a lower bound for the number of elements that we have to leave out.

We will construct a set $A_n$ of disjoint pairs $\{y_i,z_i\}$ such that $y_iz_i$ is always a perfect cube. Then we have to leave out at least one element from each pair in $A_n$, hence $|A_n| \leq n-f_{2,3}(n)$.

Let 
\[
    A_n=\left\{\left\{a^2b,ab^2\right\}:\  a,b \in \mathbb{Z}^+ \text{ squarefree},\  a \leq b<\sqrt[3]{n}\right\}.
\] Then $\left(a^2b\right)\left(ab^2\right)=(ab)^3$ is indeed a perfect cube. The squarefree parts of $ab^2$ and $a^2b$ are $a$ and $b$ respectively, which implies that the pairs are disjoint. It is well-known that the number of squarefree numbers less than $m$ is $\left(\frac{6}{\pi ^2}+o(1)\right)m$. Hence
$|A_n| \geq \frac{1}{2}{\left (\frac{6}{\pi ^2} +o(1) \right)^2 n^{2/3}}>0.18n^{2/3}$, which proves the lower bound with $c_1=0.18$.

\end{proof}

%\section{Case $k=3$}

Now, we prove Theorem~\ref{thm-k=3-f} and Theorem~\ref{thm-k=3-F} and give an approximation for the constants $c_{3,3}$ and $C_{3,3}$.

\begin{proof}[Proof of Theorem~\ref{thm-k=3-f}]
Let us take a set $A\seq [n]$ such that the equation $a_1a_2a_3=x^3$ has no solution with $a_1,a_2,a_3\in A$ and integer $x$, except the trivial case $a_1=a_2=a_3$. Note that distinct elements of $A$ have distinct cubefree parts, since $uv^3, uw^3\in A$ with $v\ne w$  would yield the solution $a_1=a_2=uv^3, a_3=uw^3$. Therefore, we can restrict our attention to sets of cubefree numbers, since each element of $A$ may be replaced by its cubefree part. From now on, it is assumed that $A$ contains only cubefree integers.

First, we give an upper bound for the size of $A$. Let $r$ be a fixed positive integer and let $p_i$ denote the $i$th prime. Each cubefree $a\in [n]$ can be written as
$$a=p_1^{\alpha_1}p_2^{\alpha_2}\ldots p_r^{\alpha_r}a',$$
where $\alpha_1,\dots,\alpha_r\in \{0,1,2\}$ and $a'$ is cubefree satisfying $\gcd(a',p_1p_2\dots p_r)=1$. Here $p_1^{\alpha_1}p_2^{\alpha_2}\ldots p_r^{\alpha_r}$ is the \textit{$p_r$-smooth} and $a'$ is the \textit{$p_{r+1}$-rough} part of the number $a$. Observe that the product of three integers is a perfect cube if and only if so are the product of their $p_r$-smooth parts and the product of their $p_{r+1}$-rough parts.
In particular, for a fixed $a'$ there cannot be three elements in $A$ with $p_{r+1}$-rough part $a'$ such that the product of their $p_r$-smooth parts is a perfect cube. Note that the product of three $p_r$-smooth numbers is a cube if and only if the sum of their exponent vectors $(\alpha_1,\alpha_2,\dots,\alpha_r)$ add up to $(0,0,\dots,0)$ calculating coordinate-wise modulo 3. Alternatively, if we consider the exponent vectors as elements of $\mathbb{F}_3^r$, they form a nontrivial 3-term arithmetic progression (3AP). Let $L_r(i)$ be the set of $p_r$-smooth cubefree integers up to $i$:
$$L_r(i):=\left \{p_1^{\alpha_1}p_2^{\alpha_2}\dots p_r^{\alpha_r}:\ \alpha_1,\dots,\alpha_r\in \{0,1,2\} \right \} \cap [i],$$
and let $s_r(i)$ denote the largest possible size of a subset of $L_r(i)$ avoiding nontrivial solutions to $a_1a_2a_3=x^3$. Note that $s_r(i)$ is the size of the largest 3AP-free subset of 
$$\{ (\alpha_1,\dots,\alpha_r)\in \{0,1,2\}^r:\ \alpha_1\log p_1+\dots+\alpha_r \log p_r\leq \log i\},$$
if we consider this set as a subset of $\mathbb{F}_3^r$. For brevity, we will say that $s_r(i)$ is the size of the largest 3AP-free subset of $L_r(i)$. Clearly, for every $i\geq p_1^2\dots p_r^2$, we have $s_r(i)=s_r(p_1^2\dots p_r^2)$ (whose common value is $r_3(\mathbb{F}_3^r)$, the largest possible size of a 3AP-free subset of $\mathbb{F}_3^r$).

For a $p_{r+1}$-rough number $a'$, there are at most $s_r\left(\lfloor n/a' \rfloor\right)$ elements of $A$ that are of the form $a=p_1^{\alpha_1}p_2^{\alpha_2}\ldots p_r^{\alpha_r}a'$. Let us partition $[n]$ into the finite union of intervals $I_i=\left(\frac{n}{i+1},\frac{n}{i} \right]$ (where $1\leq i\leq p_1^2\dots p_r^2-1$) and $I_{p_1^2\dots p_r^2}=\left[1,\frac{n}{p_1^2\dots p_r^2}\right]$. Note that the number of $p_{r+1}$-rough elements of $I_i$ is $|I_i|\cdot \prod\limits_{j=1}^r \left (1-\frac{1}{p_j}\right)+O_r(1)$, where $|I_i|$ denotes the size of $I_i$, since in each set of $p_1p_2\dots p_r$ consecutive integers the proportion of $p_{r+1}$-rough numbers is exactly $\prod\limits_{j=1}^r \left (1-\frac{1}{p_j}\right)$. Therefore,
$$|A|\leq \sum\limits_{i=1}^{p_1^2\dots p_r^2} |I_i|\cdot \prod\limits_{j=1}^r \left (1-\frac{1}{p_j}\right)s_r(i)+O_r(1).$$
Setting 
$$\gamma_r:=\left(\frac{s_r(p_1^2\dots p_r^2)}{p_1^2\dots p_r^2}+\sum\limits_{i=1}^{p_1^2\dots p_r^2-1} \frac{s_r(i)}{i(i+1)}\right)\prod\limits_{j=1}^r \left (1-\frac{1}{p_j}\right),$$ 
it is obtained that $|A|\leq \gamma_rn+O_r(1)$, consequently, 
\begin{equation}\label{eq-upp}
    \limsup\limits_{n\to\infty} \frac{f_{3,3}(n)}{n}\leq \inf \{\gamma_r:\ r\geq 1\}.
\end{equation}
%The sequence $\gamma_r$ is in fact decreasing, but we will not make use of this, instead, as a next step, we show a matching lower bound.

In the previous upper bound we used only that there is no nontrivial solution to $a_1a_2a_3=x^3$, where the elements $a_1,a_2,a_3\in A$ have the {\it same} $p_{r+1}$-rough part. However, if we consider only squarefree numbers as possible $p_{r+1}$-rough parts $a'$, then we will get a suitable set, as among the squarefree numbers the equation $a_1a_2a_3=x^3$ has only  trivial solutions, where $a_1=a_2=a_3$.
Indeed, in the prime factorization of a product of three squarefree numbers each exponent is 1, 2, or 3, so in order to get a perfect cube, the three numbers must have exactly the same prime factors, meaning they coincide.  Let us define $A$ in the following way: for each $p_{r+1}$-rough squarefree $a'\leq n$ choose $s_r(\lfloor n/a' \rfloor )$ elements $t_1, t_2, \dots, t_{s_r(\lfloor n/a' \rfloor )}$ of $L_r(\lfloor n/a' \rfloor)$ in such a way that they form a 3AP-free subset of $L_r(\lfloor n/a' \rfloor)$, and put $a't_1,\dots,a't_{s_r(\lfloor n/a' \rfloor )}$ into $A$. 

Now, assume that $a_1a_2a_3$ is a perfect cube for some $a_1,a_2,a_3\in A$. Then the product of the $p_{r+1}$-rough parts of $a_1,a_2,a_3$ is also a perfect cube, but they are all squarefree, so the $p_{r+1}$-rough parts of $a_1,a_2,a_3$ must be the same, say $a'$. The product of the $p_r$-smooth parts is also a perfect cube, but multiples of $a'$ were added to $A$ in such a way that a three-factor product is a cube only if the $p_{r+1}$-smooth parts are also the same, thus $a_1=a_2=a_3$. Hence, $A$ does in fact satisfy the required property.

To estimate the size of $A$ we need bounds for the number of squarefree $p_{r+1}$-rough numbers in the intervals $I_i$. The $p_{r+1}$-rough numbers can be partitioned into residue classes modulo $p_1\dots p_r$. By an old result of Prachar~\cite{Prac} we get that
\begin{multline*}|\{a'\in I_i:\ \gcd(a',p_1\dots p_r)=1\text { and $a'$ is squarefree}\}|=\\
=|I_i|\cdot \prod\limits_{j=1}^r \left(1-\frac{1}{p_j}\right)\prod\limits_{j>r} \left(1-\frac{1}{p_j^2}\right)+O_r(\sqrt{n}).
\end{multline*}
Therefore,
$$|A|\geq \sum\limits_{i=1}^{p_1^2\dots p_r^2} |I_i|\cdot \prod\limits_{j=1}^r \left (1-\frac{1}{p_j}\right)\prod\limits_{j>r} \left (1-\frac{1}{p_j^2}\right)s_r(i)-O_r(\sqrt{n}).$$
Setting 
\begin{equation*}%\label{eq-low}
\beta_r:=\gamma_r \cdot \prod\limits_{j>r} \left (1-\frac{1}{p_j^2}\right),
\end{equation*}
it is obtained that $|A|\geq \beta_rn-O_r(\sqrt{n})$, consequently, 
\begin{equation}\label{eq-low}
\liminf\limits_{n\to\infty} \frac{f_{3,3}(n)}{n}\geq \sup \{\beta_r:\ r\geq 1\}.
\end{equation}
Since $\gamma_r/\beta_r\to 1$ (as $r\to \infty$), by comparing \eqref{eq-upp} and \eqref{eq-low} we obtain that $f_{3,3}(n)/n$ converges to $\inf \{\gamma_r:\ r\geq 1\}=  \sup \{\beta_r:\ r\geq 1\}=:c_{3,3}$.

\end{proof}

Numerically we obtained the bounds $0.6224\leq c_{3,3} \leq 0.6420$ by taking $r=4$ and calculating by computer the values $s_4(i)$, which are given in Appendix \ref{table1}.

\begin{proof}[Proof of Theorem~\ref{thm-k=3-F}]

A slight modification of the proof of Theorem~\ref{thm-k=3-f} gives the analogous result for the function $F_{3,3}$; here we only highlight the differences.  
Note that here two elements of $A$ might have the same cubefree part, since solutions like $a_1=a_2=uv^3, a_3=uw^3$ are excluded here. However, {\it three} elements of $A$ can not have the same cubefree part, since $uv^3, uw^3, uz^3\in A$ with distinct $v,w,z$ would give the solution $a_1=uv^3,a_2=uw^3,a_3=uz^3$.

Therefore, we may assume that each element of $A$ is either cubefree, or a cubefree number multiplied by $8$. This leads to the following modification of the definition of $s_r(i)$: let $S_r(i)$ be the largest possible total weight of a 3AP-free subset of $L_r(i)$, where $s\in L_r(i)$ weighs $1$ if $s>\frac{i}{8}$, and weighs $2$ if $s\leq \frac{i}{8}$, since in this case $8s$ can also be chosen into ${A}$ beside $s$.

%Hence, in this case instead of $s_r(i)$, we want to find the 3-AP subset of $L_r(i)$ with the biggest weight, where $q\in L_r(i)$ weights 1 if $q>\frac{i}{8}$, and weights 2 if $q\leq \frac{i}{8}$, since in this case $8q$ can also be chosen into ${A}$. Let us denote the weight of this subset with $S_r(i)$. 

Clearly, for every $i\geq 8p_1^2\dots p_r^2$, we have $S_r(i)=S_r(p_1^2\dots p_r^2)$.
Now, partition $[n]$ into the finite union of intervals $I_i=\left(\frac{n}{i+1},\frac{n}{i} \right]$ (where $1\leq i\leq 8p_1^2\dots p_r^2-1$) and $I_{8p_1^2\dots p_r^2}=\left[1,\frac{n}{8p_1^2\dots p_r^2}\right]$. 
%Since everything that we claimed earlier about the $p_{r+1}$-rough elements of the intervals is also true here, 
Analogously to the proof of Theorem~\ref{thm-k=3-f} we get that
$$|A|\leq \sum\limits_{i=1}^{8p_1^2\dots p_r^2} |I_i|\cdot \prod\limits_{j=1}^r \left (1-\frac{1}{p_j}\right)S_r(i)+O_r(1).$$ 
Setting 
$$\Gamma_r:=\left(\frac{S_r(8p_1^2\dots p_r^2)}{8p_1^2\dots p_r^2}+\sum\limits_{i=1}^{8p_1^2\dots p_r^2-1} \frac{S_r(i)}{i(i+1)}\right)\prod\limits_{j=1}^r \left (1-\frac{1}{p_j}\right),$$ 
it is obtained that $|A|\leq \Gamma_rn+O_r(1)$, consequently, 
\begin{equation}\label{eqF-upp}
    \limsup\limits_{n\to\infty} \frac{F_{3,3}(n)}{n}\leq \inf \{\Gamma_r:\ r\geq 1\}.
\end{equation}

%Now we give a lower bound similar to the previous case, but with the additional weights.
%Using the same notations as before, the only thing that changes here is that we consider $S_r(i)$ instead of $s_r(i)$, and more intervals.
%Therefore,
As a lower bound, we get that
$$|A|\geq \sum\limits_{i=1}^{8p_1^2\dots p_r^2} |I_i|\cdot \prod\limits_{j=1}^r \left (1-\frac{1}{p_j}\right)\prod\limits_{j>r} \left (1-\frac{1}{p_j^2}\right)S_r(i)-O_r(\sqrt{n}).$$
Setting 
\begin{equation*}%\label{eqF-low}
B_r:=\Gamma_r \cdot \prod\limits_{j>r} \left (1-\frac{1}{p_j^2}\right),
\end{equation*}
we obtain that $|A|\geq B_rn-O_r(\sqrt{n})$, consequently, 
\begin{equation*}\label{eqF-low}
\liminf\limits_{n\to\infty} \frac{F_{3,3}(n)}{n}\geq \sup \{{B}_r:\ r\geq 1\}.
\end{equation*}
Since $\Gamma_r/B_r\to 1$ (as $r\to \infty$), by comparing \eqref{eqF-upp} and \eqref{eqF-low} we get that $F_{3,3}(n)/n$ converges to the limit $\inf \{\Gamma_r:\ r\geq 1\}=  \sup \{B_r:\ r\geq 1\}=:C_{3,3}$.

\end{proof}

We approximated this constant as well, relying on the previous approximation. Note that when $a' > \frac{n}{8}$, nothing changes, but this is not necessarily true for $a' \leq \frac{n}{8}$. In the previous case, for example, we could choose $6$ elements when $\frac{n}{10}\leq a' < \frac{n}{8}$, including $a'$. Now we can also include $8a'$ in the set, so the maximum weight will be $7$. We checked every interval by computer to find the subset with the maximum weight and obtained the estimates $0.6919\leq C_{3,3} \leq 0.7136$. (For the values of $S_4(i)$ we calculated see the table in Appendix~\ref{table2}.)

% EZ A C_3,3 TÁBLÁZAT NAGY C-RE ÉS R=3-RA

%\renewcommand{\arraystretch}{1.5}
%\begin{center}
%\begin{tabular}{ | c |c| } 
% \hline
% The interval of $a'$ & $S_3(\lfloor n/a'\rfloor)$ \\
% \hline\hline
% $a' \in (n/2, n]$ & 1 \\ 
% \hline 
% $a' \in (n/3, n/2]$ & 2 \\ 
% \hline
% $a' \in (n/5, n/3]$ & 3 \\ 
 %\hline
 %$a' \in (n/6, n/5]$ & 4 \\ 
 %\hline
 %$a' \in (n/8, n/6]$ & 5  \\ 
 %\hline
 %$a' \in (n/10, n/8]$ & 6  \\ 
 %\hline
 %$a' \in (n/15, n/10]$ & 7 \\ 
 %\hline
 %$a' \in (n/16, n/15]$ & 8  \\
 %\hline
 %$a' \in (n/24, n/16]$ & 9  \\
 %\hline
 %\end{tabular}
 %\quad
 %\begin{tabular}{ | c |c| } 
 %\hline 
 % The interval of $a'$ & $S_3(\lfloor n/a' \rfloor)$ \\
 %\hline\hline
 %$a' \in (n/30, n/24]$ & 10  \\ 
 %\hline
 %$a' \in (n/40, n/30]$ & 11  \\
 % \hline
 %$a' \in (n/48, n/40]$ & 12 \\
 %\hline
 % $a' \in (n/60, n/48]$ & 13  \\
 %\hline
 % $a' \in (n/80, n/60]$ & 14  \\
 %\hline
 % $a' \in (n/120, n/80]$ & 15  \\
 %\hline
 % $a' \in (n/200, n/120]$ & 16  \\
 %\hline
 % $a' \in (n/240, n/200]$ & 17  \\
 %\hline
 %$a' \leq n/240$ & 18  \\ 
 %\hline
%\end{tabular}
%\bigskip
%\end{center}
%\renewcommand{\arraystretch}{1}

\noindent

\begin{proof}[Proof of Theorem~\ref{thm-k=4}]
First we prove the lower bound.

Let $A \seq [n]$ be a subset  such that $a_1a_2a_3a_4 \neq x^3$ if $a_i \in A$, $a_1<a_2<a_3<a_4$ and let $D = \{d_1, \ldots, d_t\}$ be the set of all positive integers $d$ such that $d \leq n^{1/3}$ and $\Omega(d) \leq \frac{1}{3} \log \log n$. Then by Lemma~\ref{ess-l11},
\begin{equation*}
    \begin{split}
        t=|D| &\geq |\{d:\ d \leq n^{1/3},\ \Omega(d) \leq \frac{1}{3} \log \log n^{{1}/{3}}\}|>\\
        & > \frac{n^{1/3}}{\left(\frac{1}{3} \log n\right)^{1+\frac{1}{3} \log \frac{1}{3} - \frac{1}{3} + \frac{\eps}{3}}} > \frac{n^{1/3}}{(\log n)^{1+\frac{1}{3} \log \frac{1}{3} - \frac{1}{3} + \frac{\eps}{3}}},
    \end{split}
\end{equation*}
if $n \geq n_2\left(\eps\right)$.

Let $H$ be the $3$-uniform hypergraph on the vertex set $\{P_1, \ldots, P_t\}$ such that $\{P_i, P_j, P_k\}$ is an edge in $H$ if and only if $d_id_jd_k \in A$. Let $M$ be the set of those $m \in [n]$ such that $m \notin A$ and $m=d_id_jd_k$ for some $1 \leq i < j < k \leq t$, then $|A| \leq n - |M|$.

For a fixed $m \in M$ let $h(m)$ denote the number of triples $(d_i, d_j, d_k)$ such that $m=d_id_jd_k$, $1 \leq i < j < k \leq t$. If $m=p_1^{k_1}p_2^{k_2}\cdots p_r^{k_r} \in M$, then \[
\Omega(m)=\Omega(d_i)+\Omega(d_j)+\Omega(d_k) \leq \log \log n,
\] hence \[
h(m) \leq \tau_3(m) = \prod\limits_{i=1}^r \binom{k_i+2}{2} \leq \prod\limits_{i=1}^r 3^{k_i} = 3^{\Omega(m)} \leq 3^{\log \log n} = (\log n)^{\log 3},
\] 
where $\tau_3(m)$ denotes the number of triples $(a,b,c)$ with $a,b,c\in\zz^+$ such that $m=abc$. 

If $H$ contains a $K_4^3$ (a subhypergraph $G$ with vertex set $V=\{P_{i_1}, P_{i_2}, P_{i_3}, P_{i_4}\}$ such that $V \setminus \{P_{i_j}\}$ is an edge in $G$ for every $j \in \{1,2,3,4\}$), then for some $d_{i_1}<d_{i_2}<d_{i_3}<d_{i_4}$ and $$a_1=d_{i_1}d_{i_2}d_{i_3},\quad  a_2=d_{i_1}d_{i_2}d_{i_4}, \quad a_3=d_{i_1}d_{i_3}d_{i_4},  \quad a_4=d_{i_2}d_{i_3}d_{i_4}$$
we have $a_1<a_2<a_3<a_4$, $a_1,a_2,a_3,a_4\in A$ and $a_1a_2a_3a_4=(d_{i_1}d_{i_2}d_{i_3}d_{i_4})^3$. Therefore, $H$ does not contain any $K_4^3$. Therefore, by Lemma~\ref{Caen} there exists a constant $\delta > 0$ such that there at least $\delta t^3$ triples $(i,j,k)$, $1 \leq i < j < k \leq t$ such that $\{P_i, P_j, P_k\}$ is not an edge in $H$.% (see \cite{Chu}).

Let $h=\max\limits_{m \in M} h(m) \leq (\log n)^{\log 3}$. If $\{P_i,P_j,P_k\} \notin H$, $1 \leq i < j < k \leq t$, then $m=d_id_jd_k$ has at most $h$ decompositions as a product of three positive integers, which gives the following bound on $M$: \[
|M| \geq \frac{\delta t^3}{h} \gg \frac{n}{(\log n)^{3+\log \frac{1}{3} -1 + \eps}\cdot (\log n)^{\log 3}} = \frac{n}{(\log n)^{2+\eps}},
\] which completes the proof of the lower bound.

Now, we prove the upper bound. Let $A_n$ denote the set of the integers $a$ such that
\begin{enumerate}[(i)]
    \item $\frac{n}{\log n} \leq a \leq n$,
    \item $d^2 \mid a$ implies $d \leq \log n$, and
    \item $a$ cannot be written in the form $a=uvw$ with integers $u,v,w$ such that $\frac{\sqrt[3]{n}}{(\log n)^{16}}\le u,v,w\le \sqrt[3]{n}(\log n)^{16}$.
\end{enumerate}

It follows directly from Lemma~\ref{k=4-l1} that the equation $$a_1a_2a_3a_4=x^3$$ has no solution in $A_n$. Hence $f_{4,3}(n) \geq |A_n|$. On the other hand, let $\eps > 0$ and $n \geq n_0(\eps)$ as per Lemma~\ref{k=4-l2}. Note that for $n \geq 3$, the number of integers satisfying $(i)$ and $(ii)$ in the definition of $A_n$ is at least $n-\frac{3n}{\log n}$ (see \cite[Proof of Theorem 2]{ESS}).
By Lemma~\ref{k=4-l2}, with the exception of at most $\frac{n}{(\log n)^{1-\frac{e\log 3}{2\sqrt{3}}-\frac{\eps}{2}}}$ of these integers also satisfy $(iii)$, so we have \[
|A_n| > n-\frac{3n}{\log n} - \frac{n}{(\log n)^{1-\frac{e\log 3}{2\sqrt{3}}-\frac{\eps}{2}}} > n - \frac{n}{(\log n)^{1-\frac{e\log 3}{2\sqrt{3}}-\eps}},
\] which completes the proof.
\end{proof}

%\section{Case $d\nmid k$}

\begin{proof}[Proof of Theorem~\ref{lowerbound}.] Let $\frac{1}{3}\le \alpha \le \frac{1}{2}$. Let $A_{\alpha ,n}$ denote the set of those positive integers $a$ such that $a\le n$ and $a$ has exactly one  prime divisor $p$ greater than $n^{\alpha }$ such that $p^2 \nmid a$.

In the following, integers $p$ and $q$ denote prime numbers. Then, by
$$
\sum_{p\le x}\frac{1}{p}=\log \log x +M +o(1),
$$
(where $M$ is the Meissel-Mertens constant) and the Prime Number Theorem we have
\begin{equation*}
\begin{split}
\left|A_{\alpha ,n}\right|& = \sum_{n^{\alpha }<p\le n}\left\lfloor  \frac{n}{p} \right\rfloor - \sum_{\substack {(p,q)\\ n^{\alpha }<p,q}} \left\lfloor \frac{n}{pq} \right\rfloor =n\left(  \sum_{n^{\alpha }<p\le n}  \frac{1}{p} - \sum_{n^{\alpha }<p\le n^{1-\alpha }}\frac{1}{p}\sum_{n^{\alpha}<q\le \frac{n}{p}} \frac{1}{q} +o(1) \right)\\ 
& = n\left( -\log \alpha - \sum_{n^{\alpha }<p\le n^{1-\alpha }}\frac{\log \log \frac{n}{p}-\log \log n^{\alpha }}{p} +o(1)\right)\\
&=   n\left( -\log \alpha + \sum_{n^{\alpha }<p\le n^{1-\alpha }}\frac{\log \alpha -\log \left( 1-\frac{\log p}{\log n} \right)}{p} +o(1)\right)\\
& =n( -\log \alpha +\left( \log (1-\alpha )-\log \alpha  \right)\log \alpha- \\
&-\sum_{n^{\alpha +o(1)}\le k\le n^{1-\alpha +o(1)}}\frac{\log \left( 1-\frac{\log (k\log k)}{\log n} \right)}{k\log k}  +o(1) )\\
&=n\left( -\log \alpha +\left( \log (1-\alpha )-\log \alpha  \right)\log \alpha -\sum_{n^{\alpha }\le k\le n^{1-\alpha }}\frac{\log \left( 1-\frac{\log k}{\log n} \right)}{k\log k}  +o(1) \right) \\
&=n\left( -\log \alpha +\left( \log (1-\alpha )-\log \alpha  \right)\log \alpha -\int_{n^{\alpha }}^{n^{1-\alpha }}\frac{\log \left( 1-\frac{\log x}{\log n} \right)}{x\log x}dx  +o(1) \right) \\
&=n\left( -\log \alpha +\left( \log (1-\alpha )-\log \alpha  \right)\log \alpha -\int_{\alpha }^{1-\alpha }\frac{\log (1-t)}{t}dt  +o(1) \right),
\end{split}
\end{equation*}
as $n\to \infty $. 

Moreover, if $a_1,\dots ,a_k\in A_{\alpha ,n}$, then each  $a_i$ has exactly one prime divisor $p$ greater than $n^{\alpha}$, moreover, $p^2 \nmid a$. Therefore, the number of those prime divisors of $a_1\dots a_k$ that are larger than $n^\alpha$ is exactly $k$ (counted by multiplicity). As $d\nmid k$, this implies that $a_1\dots a_k$ cannot be a perfect $d$-th power, so $A\in \gamma _{k,d}$.

%It follows that there is a prime $q>n^{\alpha}$ such that defining $r=r(q)$ by $q^r\mid a_1\dots a_k$, $q^{r+1}\nmid a_1\dots a_k$, $d\nmid r$. Thus $a_1\dots a_k$ cannot be a $d$-power, so $A\in \Gamma _{k,d}$. 

\end{proof}

\begin{proof}[Proof of Theorem~\ref{upperbound}] 
Let us take a set $A\subseteq [n]$, $A\in \Gamma _{k,d}$. For brevity, write $t=\pi (n^{1/d})=(d+o(1))\frac{n^{1/d}}{\log n}$. Let $B$ denote the set of integers $b\in A$ such that $b$ is the product of $d$ distinct primes not larger than $n^{1/d}$, that is, $b=p_{i_1}\dots p_{i_d}$ with $1\le i_1<i_2<\dots <i_d\le t $.

Let us define the $d$-uniform hypergraph $H(B)$ on the vertex set $\{P_1,\dots ,P_t\}$ so that $P_{i_1},\dots ,P_{i_d}$ form an edge if and only if $p_{i_1}\dots p_{i_d}\in B$. Since $B\subseteq A\in \Gamma_{k,d}$, the hypergraph $H(B)$ cannot contain a $K^d_k$, otherwise there would exist prime numbers $p_{i_1}<\dots < p_{i_k}$ such that for $$a_1=p_{i_1}\dots p_{i_d},a_2=p_{i_2}\dots p_{i_{d+1}},\dots ,a_k=p_{i_k}p_{i_1}\dots p_{i_{d-1}}$$
we have $a_i\in A$ and
$$
a_1a_2\dots a_k=(p_{i_1}p_{i_2}\dots p_{i_k})^d,
$$
which would contradict that $A\in \Gamma_{k,d}$. 

By Lemma~\ref{Caen}, the number of $d$-tuples $(P_{i_1},\dots ,P_{i_d})$ which do not form an edge in $H(B)$  is at least
$$
\binom{t}{d}-\left ( 1-\frac{1}{\binom{k-1}{d-1}} +\varepsilon \right) \binom{t}{d}\gg _{k,d} \frac{n}{(\log n)^d}.
$$

Hence, at least $c_{k,d}\frac{n}{(\log n)^d}$ integers of the form $p_{i_1}\dots p_{i_d}\le n$ are missing from $A$, as we claimed.
\end{proof}

%\section{Case $k=6$}

\begin{proof}[Proof of Theorem~\ref{thm-k=6-f}]
The lower bound is a consequence of Lemma~\ref{exgraph} (1), Lemma~\ref{k=6-l1} and Prime Number Theorem.

%First, we prove the lower bound. Let ${P}=\{ p_1,p_2,\dots ,p_t \}$ denote the set of primes not exceeding $\sqrt{n}$. By the Prime Number Theorem,
%$$
%t=(2+o(1))\frac{\sqrt{n}}{\log n}.
%$$
%By Lemma~\ref{exgraph} (1) there exists a $\{C_3,C_4\}$-free graph $G_t$ on the vertex set $\{P_1,\dots ,P_t\}$ such that 
%$$
%E(G_t)\gg t^{3/2}.
%$$
% Let ${B}$ be the set of integers of the form $p_ip_j$ where $1\le i<j\le t$ and $P_i$ and $P_j$ are joined in $G_t$, thus $|{B}|=E(G_t)$. Then, by Lemma~\ref{k=6-l1} we have ${B}\in \gamma_{6,3}$.  Let ${C}$ denote the set of primes from the interval $(\sqrt{n}, n]$, then $
%|{C}|=\pi (n)-t$.
%Let us consider the set ${A}={B}\cup {C}$,
%then 
%$$
%|{A}|-\pi(n)\gg \frac{n^{3/4}}{(\log n)^{3/2}}.
%$$
%Now we prove that ${A}\in \gamma_{6,3}$. Assume that  $a_1a_2\dots %a_6=y^3 $ for some $a_1,\dots,a_6\in A$ and $y\in \mathbb{Z}^+$. As $B\in  \gamma_{6,3}$, we may assume that, say, $a_1$ is in $C$. Then the multiset $\{a_1,a_2,\dots,a_6\}$ must contain (at least) three copies of $a_1$, say, $a_1=a_2=a_3$, which also implies that $a_4=a_5=a_6$, giving a trivial solution. 

%If $a_6\in {C}$, then $a_4=a_5=a_6$ and therefore $a_1=a_2=a_3$, which would give a trivial solution. If $a_6\notin {C}$, then $a_1,a_2,\dots ,a_6\in {B}$, again their product cannot be a cube. 

To prove the upper bound, assume that ${A}\subseteq [n]$ and
$$
|{A}|\ge \pi(n)+c\frac{n^{3/4}}{(\log n)^{3/2}},
$$
where $c$ is so large that $A$ cannot be a multiplicative Sidon set. By  \cite[Lemma 14]{ESS} there are four distinct integers $a_1,a_2,a_3,a_4\in {A}$ such that $a_1a_2=a_3a_4$, so $a_1^2a_2^2a_3a_4=(a_1a_2)^3$ and thus ${A}\notin \gamma_{6,3}$.  
\end{proof}

\begin{proof}[Proof of Theorem~\ref{thm-k=6-F}]
First we prove the lower bound. We build on ideas from \cite{Erdos_1964}. Let ${P}$ be the set of prime numbers. Let us consider the set
$$
A=\left\{ m:\ m=pq,\ \frac{n}{\log n}<m\le n,\ p,q\in P,\ p<\frac{q}{\log n}\right\} .
$$
For the size of $A$ we give the following lower bound:
\begin{equation*}
\begin{split}
|A|&=\pi_2(n)-\pi_2\left(\frac{n}{\log n}\right)-\left|\left\{ m:\ m=pq,\ \frac{n}{\log n}<m\le n,\ p,q\in P,\ \frac{q}{\log n} \le p\leq q\right\}\right|\ge \\
& \geq (1-o(1))\frac{n\log \log n}{\log n}-\left|\left\{ m:\ m=pq,\ p\le q\le \sqrt{n\log n} \right\}\right|=(1-o(1))\frac{n\log \log n}{\log n}.
\end{split}
\end{equation*}

Now we show that $A\in \Gamma_{6,3}$. Let us assume that for some distinct elements $a_1,\dots,a_6\in A$  we have $a_1a_2\dots a_6=z^3$ for some $z\in \mathbb{Z}^+$. Then there exist prime numbers $p_1<p_2<p_3<p_4$ such that 
$$
\{ a_1,a_2,a_3,a_4,a_5,a_6\} =\{ p_1p_2, p_1p_3,p_1p_4, p_2p_3,p_2p_4,p_3p_4\} .
$$
Since $\frac{n}{\log n}< p_3p_4\le n$ and $p_3<\frac{p_4}{\log n}$, we get that $p_3\le \frac{\sqrt{n}}{\sqrt{\log n}}$. Thus $p_1p_2<\frac{n}{\log n}$, which contradicts the definition of $A$. Hence, $A\in \Gamma_{6,3}$.

To prove the upper bound, let us suppose that $\delta >0$ and for $A\subseteq [n]$ we have $|A|\ge (1+\delta )\frac{n\log \log n}{\log n}$. By \cite[Theorem 3]{Erdos_1964}, if $n$ is large enough, there exist distinct $a_1,a_2,\dots a_6\in A$ such that
$$
a_1a_2=a_3a_4=a_5a_6.
$$
Therefore, $a_1a_2a_3a_4a_5a_6$ is a cube, thus $A\notin \Gamma_{6,3}$.

\end{proof}

%\section{Case $k=9$}

\begin{proof}[Proof of Theorem~\ref{thm-k=9-f}]

The lower bound is a consequence of Lemma~\ref{exgraph} (2), Lemma~\ref{k=6-l1} and Prime Number Theorem.

Now we prove the upper bound.

%%%%%%%%%%%
%%%%%%%%%%%

Let $A\subseteq [n]$ be a set such that $a_1\dots a_9=x^3$ has no solution in $A$, except trivial solutions of the form $a^3b^3c^3=x^3$. First, we show that the equation $a_1a_2a_3=b_1b_2b_3$ has no solution in $A$ with $a_1\leq a_2\leq a_3,\ b_1\leq b_2\leq b_3$ except the trivial solutions where $(a_1,a_2,a_3)=(b_1,b_2,b_3)$. For the sake of contradiction, assume that a nontrivial solution exists. Observe that we may assume that one of the $a_i$ or $b_i$ appears only once in the multiset $\{a_1,a_2,a_3,b_1,b_2,b_3\}$. Indeed, if $a_1=a_2=a_3$, then $b_1=b_2=b_3$ would give a trivial solution, so one of the $b_i$ appears only once (and it has to be different from $a_1=a_2=a_3$). 
If $a_1=a_2<a_3$, then $a_3$  or $b_3$ appears only once, unless $a_3=b_3$. If $a_3=b_3$, then $b_1=b_2$ gives a trivial solution and $b_1<b_2$ yields that $b_1$  appears only once. The case $a_1<a_2=a_3$ can be handled in a similar way. Finally, if $a_1<a_2<a_3$, then one of the $a_i$ appears only once. 

So we may assume that $a_t$ appears only once in the multiset $\{a_1,a_2,a_3,b_1,b_2,b_3\}$. Then $a_1^2a_2^2a_3^2b_1b_2b_3=x^3$ is a nontrivial solution, since the multiplicity of $a_t$ is exactly 2.

Therefore, it suffices to show that for any set $A\subseteq [n]$ avoiding nontrivial solutions to $a_1a_2a_3=b_1b_2b_3$ ($a_1\leq a_2\leq a_3$, $b_1\leq b_2\leq b_3$) we have $|A|\leq \pi(n)+n^{2/3}$.

Note that $A$ must be a multiplicative 3-Sidon set, for the maximal possible size of these the best upper bound is $(\pi (n)+\pi (\frac{n}{2}))\ll n^{2/3}(\log n)^{2^{1/3}-1/3+o(1)}$, and the main term $\pi(n)+\pi(n/2)$ is tight. However, in our case solutions like $pq(2r)=p(2q)r$ are also excluded, consequently we shall adopt the proof of this bound to our setting. 

Let us express every element $a\in A$ in the form $a=uv$, where $v\leq u$ and either $u\leq n^{2/3}$ or $u$ is a prime. Let us consider a graph $G$ on vertex set $[n]$ where $uv$ is an edge if and only $uv$ is the representation of an element $a\in A$. Let us first consider the subgraph $G_1$ formed by the edges where $u>n^{2/3}$ is a prime. Observe that $G_1$ is $C_4$-free. Indeed, if $pa,qa,pb,qb$ form a $C_4$ (where $n^{2/3}<p,q$ are primes), then $(pa)(qb)c=(pb)(qa)c$ would be a nontrivial solution according to our definition. Since $G_1$ is $C_4$-free, the number of edges in $G_1$ is at most $\pi(n)-\pi(n^{2/3})+\Theta(n^{2/3})$ by \cite[Lemma 2]{Erdős}. For the number of remaining edges we get the bound $O(n^{2/3}\log n)$ in the same way as in the proof for the multiplicative 3-Sidon bound. Hence, the number of edges in $G$ is at most $\pi(n)+O(n^{2/3}\log n)$, completing the proof of the theorem.

\end{proof}

\noindent

\begin{proof}[Proof of Theorem~\ref{thm-k=9-F}]

The upper bound is a trivial consequence of Lemma~\ref{Verstraete} with $k=9$, $d=3$.

For the lower bound, we give a construction similar to the one in the proof of {Theorem~\ref{thm-k=9-f}}.

Let $A_0=\{p:\; \sqrt{n}<p \leq n,  \;p \text{ prime}\}\cup \{2p:\; \sqrt{n}<p \leq n/2,\; p\text{ prime}\}$. Let us again partition the primes not exceeding $\sqrt{n}$ into two sets $S$ and $T$ such that $|S|=|T|$, leaving out one prime if $\pi(\sqrt{n})$ is odd, then $$|S|=|T|=\Theta\left(\pi\left(\sqrt{n}\right)\right)=\Theta\left(\frac{\sqrt{n}}{\log n}\right).$$
Lemma~\ref{k33} guarantees the existence of a $K_{3,3}$-free bipartite graph $G=(S,T,E)$ with at least $c|S|^{5/3}$ edges for some constant $c>0$. Let $st \in A_1$ if and only if $(s,t) \in E$, and define $A=A_0 \cup A_1$. Clearly, 
\begin{equation*}
    \begin{split}
        |A|=|A_0|+|A_1| &\geq \left(\pi(n) - \pi\left(\sqrt{n}\right)\right) + \left(\pi\left(\frac{n}{2}\right) - \pi\left(\sqrt{n}\right)\right) + c|S|^{5/3}=\\
        &=\pi(n) + \pi\left(\frac{n}{2}\right) + \Theta\left(\frac{n^{5/6}}{(\log n)^{5/3}}\right).
    \end{split}
\end{equation*}

Now we show that the equation 
$$a_1\ldots a_9=x^3,\quad a_1<\ldots<a_9$$
has no solution in $A$ (that is, $A \in \Gamma_{9,3}$).

It is clear that a potential solution $a_1, \ldots, a_9 \in A$ cannot contain any elements from $A_0$. Suppose that the equation has a solution $a_1, \ldots, a_9$ in $A_1$. Then $a_1, \ldots, a_9$ are represented by $9$ edges in $G$. In order for $a_1 \ldots a_9$ to be a perfect cube, these edges must form a subgraph $H$ such that every vertex has degree divisible by $3$ in $H$. This implies that $H$ must contain at least $3$-$3$ vertices from both $S$ and $T$. However, since $G$ is $K_{3,3}$-free by construction, $H \neq K_{3,3}$, so $H$ contains at least $4$ vertices from at least one of the vertex sets $S$ and $T$. This yields that $a_1\ldots a_9$ has at least $7$ distinct prime divisors, thus $\Omega\left(x^3\right) \geq 21$. On the other hand, however, $\Omega\left(x^3\right)=\sum\limits_{i=1}^9\Omega(a_i)=18$. Hence, we have reached a contradiction, which concludes the proof.

\end{proof}

\begin{proof}[Proof of Theorem~\ref{thm-k=1233-F}] The proofs of the last five inequalities are very similar to the proofs of the first three, so only the first three are proved.

(1) First we prove the lower bound.  Let
\begin{equation*}
    \begin{split}
        B= & \{ p:\sqrt{n}<p\le n,\  p \hbox{ prime}\} \cup \{ 2p:\sqrt{n}<p\le \frac{n}{2},\ p \hbox{ prime}\}\cup\\
        &\cup \{ 3p:\sqrt{n}<p\le \frac{n}{3},\ p \hbox{ prime}\}.
    \end{split}
\end{equation*}

By the Prime Number Theorem,
$$
|B|=\pi (n)+\pi \left(\frac{n}{2}\right)+\pi \left(\frac{n}{3}\right)-O\left(\frac{\sqrt{n}}{\log n}\right).
$$
Let $P=\{ p_1,p_2,\dots ,p_t\} $ denote the set of primes $p$ with $3<p\le \sqrt{n}$, by the Prime Number Theorem $t=(2+o(2))\frac{\sqrt{n}}{\log n}$. Let $G$ be a $\{C_3,C_{4}\} $-free graph on vertex set $\{P_1,\dots ,P_t\}$ with the maximal possible number of edges. Let $C$ denote the set of elements of the form $p_ip_j$ (where $1\le i<j\le t$) such that $p_ip_j\in C$ if and only if $P_iP_j$ is an edge in $G$. By Lemma~\ref{exgraph} (1) we have 
$$
|C|\gg \frac{n^{3/4}}{(\log n)^{3/2}}.
$$
Thus, for the size of $A=B\cup C$ we have the lower bound
$$
|A|=|B|+|C|\ge \pi (n)+\pi \left(\frac{n}{2}\right)+\pi \left(\frac{n}{3}\right)+c\frac{n^{3/4}}{(\log n)^{3/2}}
$$
with some $c>0$. Now we will prove that $A\in \Gamma_{12,3}$. Assume, to the contrary, that we have $a_1\dots a_{12}=x^3$ for some distinct elements $a_1,\dots ,a_{12}\in A$. 

Let $s=|\{a_1,\dots,a_{12}\}\cap C|$, we may assume that  $a_1,\dots a_s\in C$. Observe that $a_1\dots a_s=z^3$ and $a_{s+1}\dots a_{12}=u^3$ for some $u,z\in \mathbb{Z}^+$ as $a_1\dots a_s$ and $a_{s+1}\dots a_{12}$ are coprime. Clearly, $\Omega (a_1\dots a_s)=2s$. Hence we have $3\mid 2s$, that is $3\mid s$. 
If $s=0$, then there exist prime numbers $\sqrt{n}<p_1<p_2<p_3 < p_{4}\le n$ such that $\{ a_1,\dots, a_{12}\} =\{ p_i,2p_i,3p_i: 1\le i\le 4 \}$. Hence $a_1 \dots a_{12}=1296(p_1p_2p_3p_4)^3=u^3$, which is impossible. If $s=3$, then $\Omega (a_1a_2a_3)=6$, that is, the canonical form of $a_1a_2a_3$ is either $p_1^6$ or $p_1^3p_2^3$ for some primes $p_1,p_2$, which cases are both impossible. If $s=6$, then there exist prime numbers $\sqrt{n}<p_1< p_2\le n$ such that $\{ a_7,\dots, a_{12}\} =\{ p_i,2p_i,3p_i: 1\le i\le 2 \}$. Hence $a_7 \dots a_{12}=36(p_1p_2)^3=u^3$, which is impossible. If $s=9$, then there exist prime number $\sqrt{n}<p_1\le n$ such that $\{ a_{10},a_{11}, a_{12}\} =\{ p_1,2p_1,3p_1\}$, that is $a_{10}a_{11}a_{12}=6p_1^3=u^3$, a contradiction. If $s=12$, then $a_1\dots a_s=z^3$, we get a contradiction by Lemma~\ref{Gammak=6-l1}. 

The upper bounds are direct consequences of Lemma~\ref{Verstraete}.

(2) First, we prove the lower bound. Let
\begin{equation*}
\begin{split}
B=&\{ p:\sqrt{n}<p\le n,\ p \hbox{ prime}\} \cup \{ 2p:\sqrt{n}<p\le \frac{n}{2},\ p \hbox{ prime}\} \cup\\
&\cup \{ 3p:\sqrt{n}<p\le \frac{n}{3},\ p \hbox{ prime}\} \cup \{ 5p:\sqrt{n}<p\le \frac{n}{5},\ p \hbox{ prime}\} .
\end{split}
\end{equation*}
By the Prime Number Theorem,
$$
|B|=\pi (n)+\pi \left(\frac{n}{2}\right)+\pi \left(\frac{n}{3}\right)+\pi \left(\frac{n}{5}\right)-O\left(\frac{\sqrt{n}}{\log n}\right).
$$
Let $P=\{ p_1,p_2,\dots ,p_t\} $ denote the set of primes $p$ with $5<p\le \sqrt{n}$, by the Prime Number Theorem $t=(2+o(1))\frac{\sqrt{n}}{\log n}$. Let $G$ be a $\{C_3,C_4,C_5\}$-free graph on vertex set $\{P_1,\dots ,P_t\}$ with the maximal possible number of edges. Let $C$ denote the set of elements of the form $p_ip_j$ (where $1\le i<j\le t$) such that $p_ip_j\in C$ if and only if  $P_iP_j$ is an edge in $G_t$. By Lemma~\ref{exgraph} (1), we have 
$$
|C|\gg \frac{n^{\frac{3}{4}}}{(\log n)^{\frac{3}{2}}}.
$$
Thus, for the size of $A=B\cup C$ we have the lower bound
$$
|A|=|B|+|C|\ge \pi (n)+\pi \left(\frac{n}{2}\right) +\pi \left(\frac{n}{3}\right)+\pi \left(\frac{n}{5}\right)+c\frac{n^{\frac{3}{4}}}{(\log n)^{\frac{3}{2}}}
$$
for some $c>0$. 
Now we will prove that $A\in \Gamma_{15,3}$. Assume, to the contrary, that we have $a_1\dots a_{15}=x^3$ for some distinct elements $a_1,\dots ,a_{15}\in A$.

Let $s=|\{a_1,\dots,a_{15}\}\cap C|$, we may assume that  $a_1,\dots a_s\in C$. Observe that $a_1\dots a_s=z^3$ and $a_{s+1}\dots a_{15}=u^3$ for some $u,z\in \mathbb{Z}^+$ as $a_1\dots a_s$ and $a_{s+1}\dots a_{15}$ are coprime.
 Since $\Omega (a_1\dots a_s)=2s$,  we have $3\mid 2s$, that is $3\mid s$. 
 
 If $s=0$, then there exist prime numbers $\sqrt{n}<p_1<\dots <p_{5}\le n$ such that $$
a_1\dots a_{15}=2^{\alpha}3^{\beta}5^{\gamma}(p_1\dots p_5)^3, 
$$
where $3\mid \alpha, \beta, \gamma$ and $\alpha, \beta, \gamma \le 5$. Hence $\alpha +\beta +\gamma \le 9 $, so from the set $\{p_1,p_2,p_3,p_4,p_5\}$ at least $15-9=6$ distinct elements have to be chosen, which is impossible. If $s=3$, then $\Omega (a_1a_2a_3)=6$, that is, the canonical form of $a_1a_2a_3$ is either $p_1^6$ or $p_1^3p_2^3$ for some primes $p_1,p_2$, which cases are both impossible. If $s=6$, then there exist prime numbers $5<p_1<p_2<p_3<p_4$ such that $\{ a_1,a_2,a_3,a_4,a_5,a_6\}=\{ p_1p_2,p_1p_3,p_1p_4,p_2p_3,p_2p_4,p_3p_4\}$, that is $G$ contains a cycle $C_4$, which is impossible. If $s=9$, then there exists a $p\in \{ 2,3,5\} $ and $\alpha \in \{ 1,2\}$ such that the canonical form of $a_{10}\dots a_{15}=u^3$ contains $p^{\alpha}$, which is impossible. If $s=12$ or $s=15$, then $a_1\dots a_s=z^3$, we get a contradiction by Lemma~\ref{Gammak=6-l1}. 

The upper bound is a little modification of Verstra\"{e}te's bound (see  Lemma~\ref{Verstraete}). Let $A\subseteq [n]$, $A\in \Gamma _{15,3}$. Let $\ell(a)$ be the largest prime factor of $a\in A$. Verstra\"{e}te proved that 
$$
\left|\left\{ a: a\in A,\ \ell(a)\le \frac{n}{5} \right\}\right|\le 4\pi \left(\frac{n}{5}\right)+O(n^{5/6}).
$$
Clearly,
$$
\left|\left\{ a: a\in A,\ \frac{n}{2}<\ell(a)\le n\right\}\right|\le \pi(n)-\pi \left(\frac{n}{2}\right),
$$
$$
\left|\left\{ a: a\in A,\ \frac{n}{3}<\ell(a)\le \frac{n}{2}\right\}\right|\le 2\left(\pi\left(\frac{n}{2}\right)-\pi \left(\frac{n}{3}\right)\right),
$$
$$
\left|\left\{ a: a\in A,\ \frac{n}{4}<\ell(a)\le \frac{n}{3}\right\}\right|\le 3\left(\pi\left(\frac{n}{3}\right)-\pi \left(\frac{n}{4}\right)\right).
$$
Observe that
$$
\left|\left\{ a: a\in A,\ \frac{n}{5}<\ell(a)\le \frac{n}{4}\right\}\right|\le 3\left(\pi\left(\frac{n}{4}\right)-\pi \left(\frac{n}{5}\right)\right)+4,
$$
since otherwise there would exist prime numbers $p_i\in \left(\frac{n}{5},\frac{n}{4}\right]$, $1\le i\le 5$ such that 
$$
\{ p_i,2p_i,4p_i\} \subseteq A,
$$
and then 
$$
p_1(2p_1)(4p_1)\dots p_5(2p_5)(4p_5)=(32p_1p_2p_3p_4p_5)^3,
$$
would be a nontrivial solution. Therefore,
\begin{equation*}
\begin{split}
|A|&\le \pi(n)-\pi \left(\frac{n}{2}\right)+2\left(\pi \left(\frac{n}{2}\right)-\pi \left(\frac{n}{3}\right)\right)+\\&+3\left(\pi\left(\frac{n}{3}\right)-\pi \left(\frac{n}{4}\right)\right)+3\left(\pi\left(\frac{n}{4}\right)-\pi \left(\frac{n}{5}\right)\right)+4\pi \left(\frac{n}{5}\right)+O(n^{5/6}) \\
 &=\pi(n)+\pi\left(\frac{n}{2}\right)+\pi\left(\frac{n}{3}\right)+\pi \left(\frac{n}{5}\right)+O(n^{5/6}),
\end{split}
\end{equation*}
which completes the proof.

(3) First we prove the lower bound. Let
$$
B=\{ p:\sqrt{n}<p\le n,\ p \hbox{ prime}\} \cup \{ 2p:\sqrt{n}<p\le \frac{n}{2},\ p \hbox{ prime}\}.
$$
Then, by the Prime Number Theorem,
$$
|B|=\pi (n)+\pi \left(\frac{n}{2}\right)-O\left(\frac{\sqrt{n}}{\log n}\right).
$$
Let $P=\{ p_1,p_2,\dots ,p_t\} $ denote the set of primes $p$ with $2<p\le \sqrt{n}$, so that by the Prime Number Theorem $t=(2+o(1))\frac{\sqrt{n}}{\log n}$. Let $G_t$ be a $\{C_3,C_4,C_5,C_6\}$-free graph on the vertex set $\{P_1,\dots ,P_t\}$ with the maximal number of edges. Let $C$ denote the set of elements of the form $p_ip_j$ (where $1\le i<j\le t$) such that $p_ip_j\in C$ if and only if  $P_iP_j$ is an edge in $G_t$. By Lemma~\ref{exgraph}~(2) we have $$
|C|\gg \frac{n^{2/3}}{(\log n)^{4/3}}.
$$
Thus, for the size of $A=B\cup C$ we have the lower bound
$$
|A|=|B|+|C|\ge \pi (n)+\pi 
\left(\frac{n}{2}\right)+c\frac{n^{2/3}}{(\log n)^{4/3}}
$$
for some $c>0$. 
Now we will prove that $A\in \Gamma_{18,3}$. Assume, to the contrary, that we have $a_1\dots a_{18}=x^3$ for some $a_1,\dots ,a_{18}\in A$ such that $a_1<\dots <a_{18}$. 

Assume that $q$ is a prime with $q>\sqrt{n}$ and $q\mid x^3$. Then $q^3\mid x^3$, which is impossible, since at most two multiples of $q$ are contained in $A$. Therefore, if $p$ is a prime number with $p\mid x^3$, then $p\le \sqrt{n}$, that is, $a_i\in C$ for all $1\le i\le 18$. Hence, $a_1\dots a_{18}=z^3$, which is impossible by Lemma~\ref{Gammak=6-l1}. 

The upper bound is a direct consequence of Lemma~\ref{Verstraete}.
\end{proof}

\begin{proof}[Proof of Theorem~\ref{thm-k=3l-f}] First we prove the upper bounds.  It suffices to prove that the inequality $f_{3m+3,3}(n)\leq f_{3m,3}(n)$ for $m\ge 3$ and use the upper bound from Theorem~\ref{thm-k=9-f}. Let us assume that $A\subseteq [n]$ and $|A|>f_{3m,3}(n)$. Then there is a nontrivial solution $a_1a_2\dots a_{3m}=x^3$ (where $a_i\in A$ for each $i$). However, this implies that $a_1^4a_2\dots a_{3m}=(xa_1)^3$ is also a nontrivial solution, therefore $A\notin \gamma_{3m+3,3}$. Hence $f_{3m+3,3}(n)\le f_{3m,3}(n)$.  

The lower bounds are consequences of Lemma~\ref{exgraph} (4) and (5), Lemma~\ref{k=6-l1} and Prime Number Theorem.

%The lower bound can be proved in the same way as the lower bound in Theorem~\ref{thm-k=9-f} except that Lemma~\ref{exgraph} (2) has to be replaced by Lemma~\ref{exgraph} (3).
 \end{proof}

\begin{proof}[Proof of Theorem~\ref{thm-k=36-F}]
The proofs of the last nine inequalities are very similar to the proofs of the first three, so only the first three are proved. 

(1) First we prove the lower bound. Let
$$
B=\{ p:\sqrt{n}<p\le n,\ p \hbox{ prime}\} \cup \{ 2p:\sqrt{n}<p\le \frac{n}{2},\ p \hbox{ prime}\}.
$$
Then, by the Prime Number Theorem,
$$
|B|=\pi (n)+\pi \left(\frac{n}{2}\right)-O\left(\frac{\sqrt{n}}{\log n}\right).
$$
Let $P=\{ p_1,p_2,\dots ,p_t\} $ denote the set of primes $p$ with $2<p\le \sqrt{n}$, so that by the Prime Number Theorem $t=(2+o(1))\frac{\sqrt{n}}{\log n}$. Let $G_t$ be a $\{C_3,\dots,C_{12\ell}\}$-free graph on the vertex set $\{P_1,\dots ,P_t\}$ with the maximal number of edges. Let $C$ denote the set of elements of the form $p_ip_j$ (where $1\le i<j\le t$) such that $p_ip_j\in C$ if and only if  $P_iP_j$ is an edge in $G_t$. By Lemma~\ref{exgraph}~(4) we have $$
|C|\gg \frac{n^{\frac{9\ell}{18\ell-2}}}{(\log n)^{\frac{9\ell}{9\ell-1}}}.
$$
Thus, for the size of $A=B\cup C$ we have the lower bound
$$
|A|=|B|+|C|\ge \pi (n)+\pi 
\left(\frac{n}{2}\right)+c\frac{n^{\frac{9\ell}{18\ell-2}}}{(\log n)^{\frac{9\ell}{9\ell-1}}}
$$
for some $c>0$. 
Now we will prove that $A\in \Gamma_{36\ell,3}$. Assume, to the contrary, that we have $a_1\dots a_{36\ell}=x^3$ for some $a_1,\dots ,a_{36\ell}\in A$ such that $a_1<\dots <a_{36\ell}$. 

Assume that $q$ is a prime with $q>\sqrt{n}$ and $q\mid x^3$. Then $q^3\mid x^3$, which is impossible, since at most two multiples of $q$ are contained in $A$. Therefore, if $p$ is a prime number with $p\mid x^3$, then $p\le \sqrt{n}$, that is, $a_i\in C$ for all $1\le i\le 36\ell$. Hence, $a_1\dots a_{36\ell}=x^3$, which is impossible by Lemma~\ref{Gammak=6-l1}.

The upper bound is a direct consequence of Lemma~\ref{Verstraete}.

(2) First we prove the lower bound. Let
\begin{equation*}
    \begin{split}
        B= & \{ p:\sqrt{n}<p\le n,\  p \hbox{ prime}\} \cup \{ 2p:\sqrt{n}<p\le \frac{n}{2},\ p \hbox{ prime}\}\cup\\
        &\cup \{ 3p:\sqrt{n}<p\le \frac{n}{3},\ p \hbox{ prime}\}.
    \end{split}
\end{equation*}

By the Prime Number Theorem,
$$
|B|=\pi (n)+\pi \left(\frac{n}{2}\right)+\pi \left(\frac{n}{3}\right)-O\left(\frac{\sqrt{n}}{\log n}\right).
$$
Let $P=\{ p_1,p_2,\dots ,p_t\} $ denote the set of primes $p$ with $3<p\le \sqrt{n}$, by the Prime Number Theorem $t=(2+o(2))\frac{\sqrt{n}}{\log n}$. Let $G_t$ denote a $\{C_3,\dots,C_{12\ell+1}\} $-free graph on vertex set $\{P_1,\dots ,P_t\}$ with the maximal number of edges. Let $C$ denote the set of elements of the form $p_ip_j$ (where $1\le i<j\le t$) such that $p_ip_j\in C$ if and only if $P_iP_j$ is an edge in $G_t$. By Lemma~\ref{exgraph} (4) we have $$
|C|\gg \frac{n^{\frac{9\ell}{18\ell-2}}}{(\log n)^{\frac{9\ell}{9\ell-1}}}.
$$
Thus, for the size of $A=B\cup C$ we have the lower bound
$$
|A|=|B|+|C|\ge \pi (n)+\pi \left(\frac{n}{2}\right)+\pi \left(\frac{n}{3}\right)+c\frac{n^{\frac{9\ell}{18\ell-2}}}{(\log n)^{\frac{9\ell}{9\ell-1}}}
$$
with some $c>0$. Now we will prove that $A\in \Gamma_{36\ell+3,3}$. Assume, to the contrary, that we have $a_1\dots a_{36\ell+3}=x^3$ for some distinct elements $a_1,\dots ,a_{36\ell+3}\in A$. 

Let $s=|\{a_1,\dots,a_{36\ell+3}\}\cap C|$, we may assume that  $a_1,\dots, a_s\in C$. Observe that $a_1\dots a_s=z^3$ and $a_{s+1}\dots a_{36\ell +3}=u^3$ for some $u, z\in \mathbb{Z}^+$ as $a_1\dots a_s$ and $a_{s+1}\dots a_{36\ell+3}$ are coprime. Clearly, $\Omega (a_1\dots a_s)=2s$. Hence we have $3\mid 2s$, that is, $3\mid s$. 
If $s=0$, then there exist prime numbers $\sqrt{n}<p_1<\dots < p_{12\ell+1}\le n$ such that $\{ a_1,\dots, a_{36\ell+3}\} =\{ p_i,2p_i,3p_i: 1\le i\le 12\ell+1 \}$. Hence $a_1 \dots a_{36\ell+3}=6^{12\ell+1}(p_1\dots p_{12\ell+1})^3=u^3$, which is impossible. If $s=3$, then $\Omega (a_1a_2a_3)=6$, that is, the canonical form of $a_1a_2a_3$ is either $p_1^6$ or $p_1^3p_2^3$ for some primes $p_1,p_2$, which cases are both impossible. If $s=6$ then there exist prime numbers $\sqrt{n}<p_1<\dots < p_{12\ell-1}\le n$ such that $\{ a_7,\dots, a_{36\ell+3}\} =\{ p_i,2p_i,3p_i: 1\le i\le 12\ell-1 \}$. Hence $a_7 \dots a_{36\ell+3}=6^{12\ell-1}(p_1\dots p_{12\ell-1})^3=u^3$, which is impossible. If $s=9$ ,then there exist prime numbers $\sqrt{n}<p_1<\dots < p_{12\ell-2}\le n$ such that $\{ a_1,\dots, a_{36\ell-6}\} =\{ p_i,2p_i,3p_i: 1\le i\le 12\ell-2 \}$. Hence $a_1 \dots a_{36\ell-6}=6^{12\ell-2}(p_1\dots p_{12\ell-2})^3=u^3$, which is impossible. If $s\ge 12$, then $a_1\dots a_s=z^3$, we get a contradiction by Lemma~\ref{Gammak=6-l1}.

The upper bound is a direct consequence of Lemma~\ref{Verstraete}.

(3) Firt we prove the lower bound. Let
\begin{equation*}
    \begin{split}
        B= & \{ p:\sqrt{n}<p\le n,\  p \hbox{ prime}\} \cup \{ 2p:\sqrt{n}<p\le \frac{n}{2},\ p \hbox{ prime}\}\cup\\
        &\cup \{ 3p:\sqrt{n}<p\le \frac{n}{3},\ p \hbox{ prime}\}.
    \end{split}
\end{equation*}

By the Prime Number Theorem,
$$
|B|=\pi (n)+\pi \left(\frac{n}{2}\right)+\pi \left(\frac{n}{3}\right)-O\left(\frac{\sqrt{n}}{\log n}\right).
$$
Let $P=\{ p_1,p_2,\dots ,p_t\} $ denote the set of primes $p$ with $3<p\le \sqrt{n}$, by the Prime Number Theorem $t=(2+o(2))\frac{\sqrt{n}}{\log n}$. Let $G_t$ denote a $\{C_3,\dots,C_{12\ell+2}\} $-free graph on vertex set $\{P_1,\dots ,P_t\}$ with the maximal number of edges. Let $C$ denote the set of elements of the form $p_ip_j$ (where $1\le i<j\le t$) such that $p_ip_j\in C$ if and only if $P_iP_j$ is an edge in $G_t$. By Lemma~\ref{exgraph} (5) we have $$
|C|\gg \frac{n^{\frac{9\ell+1}{18\ell}}}{(\log n)^{\frac{9\ell+1}{9\ell}}}.
$$
Thus, for the size of $A=B\cup C$ we have the lower bound
$$
|A|=|B|+|C|\ge \pi (n)+\pi \left(\frac{n}{2}\right)+\pi \left(\frac{n}{3}\right)+c\frac{n^{\frac{9\ell+1}{18\ell}}}{(\log n)^{\frac{9\ell+1}{9\ell}}}
$$
with some $c>0$. Now we will prove that $A\in \Gamma_{36\ell+6,3}$. Assume, to the contrary, that we have $a_1\dots a_{36\ell+6}=x^3$ for some distinct elements $a_1,\dots ,a_{36\ell +6}\in A$. 

Let $s=|\{a_1,\dots,a_{36\ell+6}\}\cap C|$, we may assume that  $a_1,\dots, a_s\in C$. Observe that $a_1\dots a_s=z^3$ and $a_{s+1}\dots a_{36\ell +6}=u^3$ for some $u,z\in \mathbb{Z}^+$ as $a_1\dots a_s$ and $a_{s+1}\dots a_{18\ell+3}$ are coprime. Clearly, $\Omega (a_1\dots a_s)=2s$. Hence we have $3\mid 2s$, that is, $3\mid s$. 
If $s=0$, then there exist prime numbers $\sqrt{n}<p_1<\dots < p_{12\ell+2}\le n$ such that $\{ a_1,\dots, a_{36\ell+6}\} =\{ p_i,2p_i,3p_i: 1\le i\le 12\ell+2 \}$. Hence $a_1 \dots a_{36\ell+6}=6^{12\ell+2}(p_1\dots p_{12\ell +2})^3=x^3$, which is impossible. If $s=3$, then $\Omega (a_1a_2a_3)=6$, that is, the canonical form of $a_1a_2a_3$ is either $p_1^6$ or $p_1^3p_2^3$ for some primes $p_1,p_2$, which cases are both impossible. If $s=6$, then there exist prime numbers $3<p_1<p_2<p_3<p_4\le \sqrt{n}$ such that $\{ a_1,a_2,a_3,a_4,a_5,a_6\}=\{ p_1p_2,p_1p_3,p_1p_4,p_2p_3,p_2p_4,p_3p_4 \}$, therefore $G$ contains a $C_4$, which is impossible. If $s=9$, then there exist prime numbers $\sqrt{n}<p_1<\dots < p_{12\ell-1}\le n$ such that $\{ a_1,\dots, a_{36\ell-3}\} =\{ p_i,2p_i,3p_i: 1\le i\le 12\ell-1 \}$. Hence $a_{10} \dots a_{36\ell+6}=6^{12\ell-1}(p_1\dots p_{12\ell -1})^3=u^3$, which is impossible. If $s\ge 12$, then $a_1\dots a_s=z^3$, we get a contradiction by Lemma~\ref{Gammak=6-l1}.

The upper bounds are direct consequences of Lemma~\ref{Verstraete}.

\end{proof}

\section{Concluding remarks and open problems}
In this paper we gave bounds for the functions $F_{k,3}(n)$ and $f_{k,3}(n)$ for every $k$.

Finally, we pose some problems for further research. Clearly, $F_{1,d}(n)=f_{1,d}(n)=n-n^{1/d}+O(1)$. For $1<k<d$, the set
$$
\{ 1,2,\dots ,n\} \setminus \{ m: m \in \{ 1,2,\dots ,n\} , \exists \ell\le n^{k/d}, \ell\in \mathbb{Z}^+ \hbox{ s.t. } m\mid \ell^d  \}
$$
shows that
$$
n-F_{k,d}(n)\le \sum_{\ell\le n^{k/d}}d(\ell^d)\le n^{k/d+\varepsilon }.
$$
\bigskip

\begin{problem}
Let us suppose that $1<k<d$. Is it true that
$$
n^{k/d}\ll n-F_{k,d}(n)\le n-f_{k,d}(n)\ll n^{k/d}?
$$
\end{problem}

\begin{problem}
Is it true that there exists a constant $c$ such that
$$
f_{2,3}(n)=n-(c+o(1))n^{2/3}?
$$
\end{problem}

\begin{problem}
Let $d\ge 4$. Is it true that 
$$
f_{d+1,d}(n)=(1-o(1))n?
$$
\end{problem}

As a corollary of the above theorems we get the following result:
\begin{corollary}
For $d=2,3$ and $k>d$, $d\mid k$, there exist constants $c_{k,d}>0$ and $C_{k,d}\in \mathbb{Z}^+$ such that
$$F_{k,d}(n)=(c_{k,d}+o(1))\pi _{C_{k,d}}(n).$$
\end{corollary}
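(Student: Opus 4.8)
The plan is to obtain this corollary as a bookkeeping consequence of the theorems already proved, combined with two standard facts. First, by the Prime Number Theorem, for every fixed positive integer $a$ we have $\pi(n/a)=\left(\tfrac1a+o(1)\right)\pi(n)$ (since $\log(n/a)\sim\log n$), so any finite sum $\sum_i\pi(n/a_i)$ with fixed integers $a_i$ equals $\left(\sum_i\tfrac1{a_i}+o(1)\right)\pi(n)=\left(\sum_i\tfrac1{a_i}+o(1)\right)\pi_1(n)$. Second, by Landau's theorem $\pi_2(n)\sim\frac{n\log\log n}{\log n}$, while any quantity of the shape $n^{\alpha}(\log n)^{-\beta}$ with $\alpha<1$ is $o(\pi(n))=o(\pi_1(n))$. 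With these facts in hand, one only needs to check that, for each admissible pair $(k,d)$, the corresponding theorem writes $F_{k,d}(n)$ as a \emph{main term} plus an error that is $o(\pi(n))$, where the main term is either $\frac{n\log\log n}{\log n}$ or a fixed linear combination $\sum_i\pi(n/a_i)$; reading off $c_{k,d}$ and $C_{k,d}$ then finishes the proof.

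For $d=2$ and even $k>2$, write $k=4\ell$ or $k=4\ell+2$. In the first case the bounds of Erd\H{o}s, S\'ark\"ozy and T.~S\'os give $F_{4\ell,2}(n)=\pi(n)+E(n)$, where $E(n)$ lies between $n^{2\ell/(4\ell-1)}(\log n)^{-4\ell/(4\ell-1)}$ and $n^{3/4}(\log n)^{-3/2}$; since $2\ell/(4\ell-1)<1$ for every $\ell\ge1$, we get $F_{4\ell,2}(n)=(1+o(1))\pi_1(n)$, so $c_{4\ell,2}=1$ and $C_{4\ell,2}=1$. In the second case (using the Gy\H{o}ri--Pach refinement when $k=6$) the main term is $\pi(n)+\pi(n/2)$ with an error that is $o(\pi(n))$ by the exponent bounds, so $F_{4\ell+2,2}(n)=(3/2+o(1))\pi_1(n)$, i.e. $c_{4\ell+2,2}=3/2$, $C_{4\ell+2,2}=1$.

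For $d=3$, the unique case whose main term is not of the form $\sum_i\pi(n/a_i)$ is $k=6$: Theorem~\ref{thm-k=6-F} gives $F_{6,3}(n)=(1+o(1))\frac{n\log\log n}{\log n}=(1+o(1))\pi_2(n)$, so $c_{6,3}=1$ and $C_{6,3}=2$. Every remaining multiple of $3$ larger than $3$ is treated by Theorem~\ref{thm-k=9-F} ($k=9$), Theorem~\ref{thm-k=1233-F} ($k=12,15,\dots,33$), or Theorem~\ref{thm-k=36-F} ($k=36\ell+j$ with $\ell\ge1$ and $j\in\{0,3,\dots,33\}$), and these ranges tile all such $k$ with no gap. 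In each of them the theorem states $F_{k,3}(n)=\sum_i\pi(n/a_i)+E_k(n)$ with $E_k(n)\ll n^{5/6}=o(\pi(n))$ (and a matching lower bound $\gg n^{\alpha}(\log n)^{-\beta}$ with $\alpha<1$), hence $F_{k,3}(n)=(c_{k,3}+o(1))\pi_1(n)$ with $c_{k,3}=\sum_i\tfrac1{a_i}$ a positive rational and $C_{k,3}=1$; for instance $c_{9,3}=c_{18,3}=3/2$, $c_{12,3}=c_{21,3}=c_{24,3}=c_{30,3}=c_{33,3}=11/6$, $c_{15,3}=61/30$, and the pattern continues for the families in Theorem~\ref{thm-k=36-F}.

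There is no genuine obstacle here: the corollary is pure assembly. The only points that need attention are (i) confirming that the enumerated theorems really do cover every pair $(k,d)$ with $d\in\{2,3\}$, $d\mid k$, $k>d$, and in particular that the index ranges in Theorems~\ref{thm-k=1233-F} and \ref{thm-k=36-F} partition the multiples of $3$ above $3$; and (ii) checking, case by case, that every exponent $\alpha$ appearing in an error term satisfies $\alpha<1$, so that the error is indeed $o(\pi(n))$. Both are routine verifications.
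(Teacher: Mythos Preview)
Your proposal is correct and takes the same approach as the paper, which simply states the corollary as an immediate consequence of the preceding theorems without giving a separate proof. You have just filled in the routine verifications (coverage of all cases, smallness of the error exponents, and the Landau asymptotic $\pi_2(n)\sim n\log\log n/\log n$) that the paper leaves implicit.
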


\begin{problem}
Is it true that for any $d\ge 4$ and $k>d$, $d\mid k$, there exist constants $c_{k,d}>0$ and $C_{k,d}\in \mathbb{Z}^+$ such that
$$F_{k,d}(n)=(c_{k,d}+o(1))\pi _{C_{k,d}}(n)?$$
\end{problem}

\section*{Acknowledgements.} The research was supported by the Lend\"ulet program of the Hungarian Academy of Sciences (MTA). PPP and CS were also supported by the National Research, Development and Innovation Office NKFIH (Grant Nr. K146387). CS was supported by the grant NKFI KKP144059 ”Fractal ´ geometry and applications. The first three authors would like to thank the Budapest REU 2023 program. The authors would like to thank Rich\'ard Palincza for providing help in the computer calculations needed to find the values listed in the Appendices.

\bigskip

\appendix

\section{The approximation of $c_{3,3}$}\label{table1}

The following table shows the values of $s_4(i)$ we calculated by computer and used in the approximation of $c_{3,3}$.

\bigskip
\renewcommand{\arraystretch}{1.5}
\begin{center}
\begin{tabular}{ | c |c| } 
 \hline
 interval of $a'$ & $s_4(\lfloor n/a'\rfloor)$ \\
 \hline\hline
 $a' \in (n/2, n]$ & 1  \\ 
 \hline
 $a' \in (n/3, n/2]$ & 2  \\ 
 \hline
 $a' \in (n/5, n/3]$ & 3  \\ 
 \hline
 $a' \in (n/6, n/5]$ & 4  \\ 
 \hline
 $a' \in (n/7, n/6]$ & 5  \\
 \hline
 $a' \in (n/10, n/7]$ & 6  \\
 \hline
 $a' \in (n/14, n/10]$ & 7  \\ 
 \hline
 $a' \in (n/15, n/14]$ & 8  \\ 
 \hline
 $a' \in (n/21, n/15]$ & 9  \\ 
 \hline
 $a' \in (n/25, n/21]$ & 10  \\ 
 \hline
\end{tabular}
 \quad
 \begin{tabular}{ | c |c| } 
 \hline 
  interval of $a'$ & $s_4(\lfloor n/a' \rfloor)$ \\
 \hline\hline
  $a' \in (n/30, n/25]$ & 11  \\ 
 \hline
  $a' \in (n/35, n/30]$ & 12  \\ 
 \hline
  $a' \in (n/42, n/35]$ & 13  \\ 
 \hline
  $a' \in (n/60, n/42]$ & 14  \\ 
 \hline
  $a' \in (n/70, n/60]$ & 15  \\ 
 \hline
  $a' \in (n/105, n/70]$ & 16  \\ 
 \hline
  $a' \in (n/175, n/105]$ & 17  \\ 
 \hline
  $a' \in (n/210, n/175]$ & 18  \\ 
 \hline
  $a' \in (n/315, n/210]$ & 19  \\ 
 \hline
 $a' \leq n/315$ & 20  \\ 
 \hline
\end{tabular}
\bigskip
%\caption{Approximation for $c_{3,3}$}
\end{center}
\renewcommand{\arraystretch}{1}
%\end{table}

\section{The approximation of $C_{3,3}$}\label{table2}

The following table shows the values of $S_4(i)$ we used in the approximation of $C_{3,3}$.

\bigskip
\renewcommand{\arraystretch}{1.5}
\begin{center}
\begin{tabular}{ | c |c| } 
 \hline
 interval of $a'$ & $S_4(\lfloor n/a'\rfloor)$ \\
 \hline\hline
 $a' \in (n/2, n]$ & 1  \\ 
 \hline
 $a' \in (n/3, n/2]$ & 2  \\ 
 \hline
 $a' \in (n/5, n/3]$ & 3  \\ 
 \hline
 $a' \in (n/6, n/5]$ & 4  \\ 
 \hline
 $a' \in (n/7, n/6]$ & 5  \\
 \hline
 $a' \in (n/8, n/7]$ & 6  \\
 \hline
 $a' \in (n/10, n/8]$ & 7  \\ 
 \hline
 $a' \in (n/14, n/10]$ & 8  \\ 
 \hline
 $a' \in (n/15, n/14]$ & 9  \\ 
 \hline
 $a' \in (n/16, n/15]$ & 10  \\ 
 \hline
  $a' \in (n/21, n/16]$ & 11  \\ 
 \hline
  $a' \in (n/24, n/21]$ & 12  \\ 
 \hline
  $a' \in (n/30, n/24]$ & 13  \\ 
 \hline
  $a' \in (n/35, n/30]$ & 14  \\ 
 \hline
  $a' \in (n/40, n/35]$ & 15  \\ 
 \hline
  $a' \in (n/42, n/40]$ & 16  \\ 
 \hline
  $a' \in (n/48, n/42]$ & 17  \\ 
 \hline
  $a' \in (n/56, n/48]$ & 18  \\ 
 \hline
  $a' \in (n/60, n/56]$ & 19  \\ 
 \hline
  $a' \in (n/70, n/60]$ & 20  \\ 
 \hline
\end{tabular}
 \quad
 \begin{tabular}{ | c |c| } 
 \hline 
  interval of $a'$ & $S_4(\lfloor n/a' \rfloor)$ \\
 \hline\hline
  $a' \in (n/80, n/70]$ & 21  \\ 
 \hline
  $a' \in (n/98, n/80]$ & 22  \\ 
 \hline
  $a' \in (n/105, n/98]$ & 23  \\ 
 \hline
  $a' \in (n/120, n/105]$ & 24  \\ 
 \hline
  $a' \in (n/140, n/120]$ & 25  \\ 
 \hline
  $a' \in (n/168, n/140]$ & 26  \\ 
 \hline
  $a' \in (n/200, n/168]$ & 27  \\ 
 \hline
  $a' \in (n/210, n/200]$ & 28  \\ 
 \hline
  $a' \in (n/240, n/210]$ & 29  \\ 
 \hline
  $a' \in (n/280, n/240]$ & 30  \\ 
 \hline
  $a' \in (n/392, n/280]$ & 31  \\ 
 \hline
  $a' \in (n/480, n/392]$ & 32  \\ 
 \hline
  $a' \in (n/525, n/480]$ & 33  \\ 
 \hline
  $a' \in (n/560, n/525]$ & 34  \\ 
 \hline
  $a' \in (n/784, n/560]$ & 35  \\ 
 \hline
  $a' \in (n/840, n/784]$ & 36  \\ 
 \hline
  $a' \in (n/1400, n/840]$ & 37  \\ 
 \hline
  $a' \in (n/1680, n/1400]$ & 38  \\ 
 \hline
  $a' \in (n/2520, n/1680]$ & 39  \\ 
 \hline
  $a' \leq n/2520$ & 40  \\ 
 \hline
\end{tabular}
\bigskip
%\caption{Approximation for $C_{3,3}$}
\end{center}
\renewcommand{\arraystretch}{1}
%\end{table}

\end{document}